\definecolor{cit}{rgb}{0.91,0.39,0.16}	%
\definecolor{dark-gray}{gray}{0.3}
\definecolor{dkgray}{rgb}{.3,.3,.3}
\definecolor{medgray}{rgb}{.5,.5,.5}
\definecolor{ltgray}{rgb}{.7,.7,.7}
\definecolor{dkblue}{rgb}{0,0,.5}
\definecolor{medblue}{rgb}{0,0,.75}
\definecolor{ltblue}{rgb}{0.97,0.97,1}
\definecolor{rust}{rgb}{0.5,0.1,0.1}
\definecolor{ltyellow}{rgb}{1, 1, 0.9}
\newcommand{\hilite}[1]{\hl{#1}}
\g@addto@macro{\UrlBreaks}{\UrlOrds}
\ifnum\bookmarkget{level}=0 %
\ifnum\bookmarkget{level}=-1 %
\setlist{noitemsep} %
\setlist[enumerate]{font=\sffamily\bfseries\footnotesize\textcolor{dkgray},label=\arabic*.}
\setlist[itemize]{font=\textcolor{dkgray},label=\small\textcolor{dkgray}\textbullet}
\numberwithin{equation}{section}
\theoremstyle{theorem}
\newtheorem{theorem}{Theorem}[section]
\newtheorem{lemma}[theorem]{Lemma}
\newtheorem{proposition}[theorem]{Proposition}
\newtheorem{fact}[theorem]{Fact}
\newtheorem{conjecture}[theorem]{Conjecture}
\theoremstyle{definition}
\newtheorem{definition}[theorem]{Definition}
\newtheorem{example}[theorem]{Example}
\newtheorem{remark}[theorem]{Remark}
\newtheorem*{ideaT}{Theme}
\newtheorem*{problemT}{Problem}
\Crefname{proposition}{Proposition}{Propositions}
\Crefname{figure}{Figure}{Figures}
\newmdenv[skipabove=6pt,
skipbelow=6pt,
rightline=false,
leftline=true,
topline=false,
bottomline=false,
backgroundcolor=ltyellow,
linecolor=cit,
innerleftmargin=10pt,
innerrightmargin=10pt,
innertopmargin=0pt,
innerbottommargin=5pt,
leftmargin=0cm,
rightmargin=0cm,
linewidth=4pt]{iBox}
\newmdenv[skipabove=0pt,
skipbelow=0pt,
backgroundcolor=ltblue,
linecolor=dkblue,
linewidth=2pt,
rightline=false,
leftline=false,
topline=false,
bottomline=false,
innerleftmargin=7pt,
innerrightmargin=10pt,
innertopmargin=6pt,
innerbottommargin=6pt,
leftmargin=0cm,
rightmargin=0cm,
innerbottommargin=5pt]{aBox}
\newmdenv[skipabove=10pt,
skipbelow=10pt,
backgroundcolor=white,
linecolor=dkblue,
linewidth=0.5pt,
rightline=true,
leftline=true,
topline=true,
bottomline=true,
innerleftmargin=10pt,
innerrightmargin=0.5in,
innertopmargin=5pt,
innerbottommargin=5pt,
leftmargin=0cm,
rightmargin=0cm]{lfBox}
\numberwithin{figure}{section}
\numberwithin{table}{section}
\numberwithin{recipe}{section}
\newcommand{\econst}{\mathrm{e}}
\newcommand{\iunit}{\mathrm{i}}
\newcommand{\eps}{\varepsilon}
\renewcommand{\phi}{\varphi}
\newcommand{\onecirc}{\text{\ding{192}}}
\newcommand{\twocirc}{\text{\ding{193}}}
\newcommand{\vct}[1]{\bm{#1}}
\newcommand{\mtx}[1]{\bm{#1}}
\newcommand{\set}[1]{\mathsf{#1}}
\newcommand{\coll}[1]{\mathcal{#1}}
\newcommand{\term}[1]{\hilite{\textit{#1}}}
\newcommand{\N}{\mathbb{N}}
\newcommand{\R}{\mathbb{R}}
\newcommand{\C}{\mathbb{C}}
\newcommand{\F}{\mathbb{F}}
\newcommand{\M}{\mathbb{M}}
\newcommand{\Sym}{\mathbb{H}}
\newcommand{\comp}{\textsf{c}}
\newcommand{\indicator}{\mathbb{1}}
\renewcommand{\Re}{\operatorname{Re}}
\renewcommand{\Im}{\operatorname{Im}}
\newcommand{\trace}{\operatorname{Tr}}
\newcommand{\Id}{\mathbf{I}}
\newcommand{\psdle}{\preccurlyeq}
\newcommand{\abs}[1]{\vert {#1} \vert}
\newcommand{\norm}[1]{\Vert {#1} \Vert}
\newcommand{\ip}[2]{\langle {#1}, \ {#2} \rangle}
\newcommand{\abssq}[1]{\abs{#1}^2}
\newcommand{\normsq}[1]{\norm{#1}^2}
\newcommand{\fnorm}[1]{\norm{#1}_{\mathrm{F}}}
\newcommand{\fnormsq}[1]{\norm{#1}_{\mathrm{F}}^2}
\newcommand{\lnorm}[1]{\left\Vert {#1} \right\Vert}
\newcommand{\diff}{\mathrm{d}}
\newcommand{\Diff}{\mathrm{D}}
\newcommand{\idiff}{\,\diff}
\newcommand{\Expect}{\operatorname{\mathbb{E}}}
\newcommand{\Var}{\operatorname{Var}}
\newcommand{\Cov}{\operatorname{Cov}}
\newcommand{\Probe}{\mathbb{P}}
\newcommand{\Prob}[1]{\Probe\left\{ #1 \right\}}
\newcommand{\condbar}{\, \vert \,}
\newcommand{\lcondbar}{\, \big\vert \,}
\newcommand{\Varo}{\mathsf{Var}}
\newcommand{\Mo}{\mathsf{Mom}}
\newcommand{\normal}{\textsc{normal}}
\newcommand{\uniform}{\textsc{uniform}}
\newcommand{\bernoulli}{\textsc{bernoulli}}
\newcommand{\binomial}{\textsc{binomial}}
\newcommand{\poisson}{\textsc{poisson}}
\newcommand{\multinomial}{\textsc{multinomial}}
\newcommand{\goe}{\mathrm{goe}}
\newcommand{\gue}{\mathrm{gue}}
\newcommand{\mgf}{\mathrm{mgf}}
\newcommand{\supp}{\operatorname{supp}}
\DeclareFontFamily{U}{matha}{\hyphenchar\font45}
\DeclareFontShape{U}{matha}{m}{n}{
  <-6> matha5 <6-7> matha6 <7-8> matha7
  <8-9> matha8 <9-10> matha9
  <10-12> matha10 <12-> matha12
  }{}
\DeclareSymbolFont{matha}{U}{matha}{m}{n}
\DeclareMathSymbol{\abscont}{3}{matha}{"CE}
\def\paragraph{\@startsection{paragraph}{4}%
  \z@\z@{-\fontdimen2\font}%
  {\normalfont\scshape}}
\title[Comparison for random psd matrices]{Comparison theorems for the minimum eigenvalue \\ of a random positive-semidefinite matrix}
\author{Joel A.~Tropp}
\address%
{Department of Computing and Mathematical Science, Caltech, Pasadena, CA, USA.}
\email{jtropp@caltech.edu, https://tropp.caltech.edu}
\date{Research: August 2024.  Manuscript: 2 January 2025.  Revised: 27 January 2025.} %
\subjclass[2020]{Primary: 15-B52, 60-B20}
\keywords{Comparison theorem, high-dimensional probability, high-dimensional statistics, random matrix}
\begin{document}

\begin{abstract}
This paper establishes a new comparison principle for the minimum
eigenvalue of a sum of independent random positive-semidefinite matrices.
The principle states that the minimum eigenvalue of the matrix sum
is controlled by the minimum eigenvalue of a Gaussian random matrix
that inherits its statistics from the summands.
This methodology is powerful because of the vast arsenal of tools
for treating Gaussian random matrices.  As applications, the paper
presents short, conceptual proofs of some old and new results in high-dimensional
statistics.  It also settles a long-standing open question
in computational linear algebra about the injectivity
properties of very sparse random matrices.
\end{abstract}

\maketitle

\section{Motivation}

Random positive-semidefinite (\term{psd}) matrices appear throughout %
high-dimensional statistics and high-dimensional probability.
In particular, random psd matrices model the sample covariance of a random vector,
and they capture properties of random linear embeddings.
For a psd matrix, the minimum eigenvalue %
provides a quantitative measure of invertibility,
so it is often the crucial statistic of these random matrix models.
This paper introduces a new technique for studying the minimum eigenvalue
of a random psd matrix by establishing a comparison with the minimum eigenvalue
of a Gaussian random matrix.  It also showcases several applications of this
methodology. %

\subsection{Intuition: Positive random walks}

Consider a random walk on the real line that can only move in the positive direction.
What is the probability that the random walk remains close to its origin?
This event occurs only when \hilite{all} of the increments are small, which is
very unlikely.  We can capture this insight with a standard probability
inequality that is the starting point for our investigation.

To model the positive random walk, we introduce
a sum of \hilite{nonnegative} real random variables
that are independent and identically distributed (\term{iid}):
\[
Y = \sum_{i=1}^n W_i
\quad\text{where the $W_i$ are iid copies of $W \geq 0$.}
\]
When the increment $W$ has two moments, we can compare the
moment generating function (\term{mgf}) for the lower tail
of the positive sum $Y$ with the mgf of a Gaussian real random variable.
For all $\theta \geq 0$,
\begin{equation} \label{eqn:intro-scalar-mgf}
\Expect[ \econst^{-\theta Y} ] \leq \Expect[ \econst^{- \theta Z} ]
\quad\text{where}\quad
Z \sim \normal_{\R}\left( n \cdot \Expect[ W ],\ n \cdot \Expect[ W^2 ] \right).
\end{equation}
See \cref{sec:scalar-pf} for a proof of~\eqref{eqn:intro-scalar-mgf}.
The mgf bound leads to a classic inequality for the lower tail:
\begin{equation} \label{eqn:intro-scalar-tail}
\Prob{ Y \leq \Expect[ Z ] - t } \leq
	\econst^{-t^2 / (2 \Var[Z])}
\quad\text{for $t \in [0,1]$.}
\end{equation}
In other words, the lower tail of the positive sum $Y$ is related
to the lower tail of a matching Gaussian random variable $Z$
that inherits its statistics from the summand $W$.
See \cref{fig:comparison-1d} for an illustration.

\begin{figure}
\begin{center}
\includegraphics[height=1.3in]{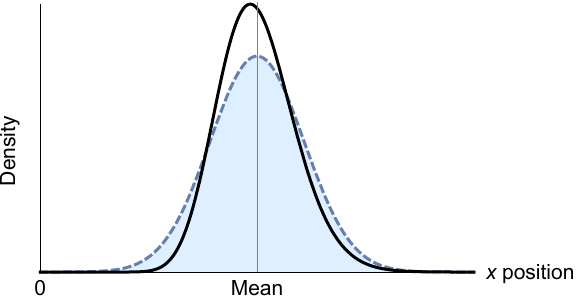} \hspace{0.1in}
\includegraphics[height=1.3in]{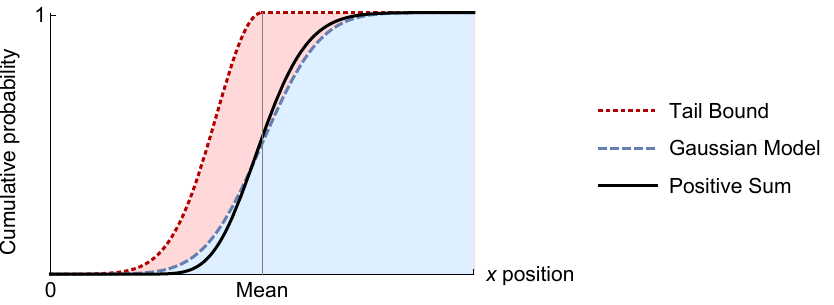}
\end{center}
\caption{\textbf{(Positive sum: Comparison).}  These plots illustrate the distribution $Y$ of an iid sum
of nonnegative real random variables {(black line)}, along with the matching Gaussian distribution $Z$ {(dashed blue)},
described by~\eqref{eqn:intro-scalar-mgf}, and the tail bound {(dotted red)}, described by~\eqref{eqn:intro-scalar-tail}.
The left-hand panel shows the densities; the right-hand panel shows the cumulative distributions.}
\label{fig:comparison-1d}
\end{figure}

\subsection{Random psd matrices}
\label{sec:intro-random-psd}

This paper demonstrates that the same phenomena
persist in the matrix setting.  Consider a sum of iid random psd matrices, either real or complex: 
\begin{equation} \label{eqn:iid-psd-intro}
\mtx{Y} = \sum_{i=1}^n \mtx{W}_i
\quad\text{where the $\mtx{W}_i$ are iid copies of a random psd matrix $\mtx{W}$.}
\end{equation}
The minimum eigenvalue $\lambda_{\min}(\mtx{Y})$ can be expressed
as the minimum of a family of iid positive sums:
\begin{equation} \label{eqn:iid-psd-sum-rayleigh}
\lambda_{\min}(\mtx{Y}) = \min\nolimits_{\norm{\vct{u}} = 1} \sum_{i=1}^n \vct{u}^* \mtx{W}_i \vct{u}.
\end{equation}
For each direction $\vct{u}$, the sum in~\eqref{eqn:iid-psd-sum-rayleigh}
is very unlikely to be zero because of~\eqref{eqn:intro-scalar-tail}.
On the other hand, the random summands $\mtx{W}_i$ must cover every direction $\vct{u}$
before the minimum eigenvalue $\lambda_{\min}(\mtx{Y})$ is strictly positive.
It is not clear which of these two opposing principles prevails.

To resolve this dilemma, we adapt the strategy behind
the scalar inequality~\eqref{eqn:intro-scalar-tail} to the matrix setting.
Construct a self-adjoint Gaussian random matrix $\mtx{Z}$
that inherits its statistics from the summands: %
\begin{equation} \label{eqn:intro-matrix-comp}
\begin{aligned}
\Expect[ \mtx{Z} ] &= n \cdot \Expect[ \mtx{W} ]; \\
\Var[ \trace[\mtx{MZ}] ] &= n \cdot \Expect[ \abssq{\trace[\mtx{MW}]} ]
\quad\text{for all self-adjoint $\mtx{M}$.}
\end{aligned}
\end{equation}
Inspired by~\eqref{eqn:intro-scalar-mgf},
we will establish a comparison between the trace mgfs of the two random matrices:
\begin{equation} \label{eqn:intro-matrix-mgf}
\Expect[ \trace \econst^{-\theta \mtx{Y}} ]
 	\leq 2 \Expect[ \trace \econst^{-\theta \mtx{Z}} ]
	\quad\text{for all $\theta \geq 0$.}
\end{equation}
\Cref{fig:comparison-2d} illustrates the comparison.
While the scalar case~\eqref{eqn:intro-scalar-mgf} is easy,
the matrix inequality~\eqref{eqn:intro-matrix-mgf} relies on a deep fact from matrix analysis
called Stahl's theorem, formerly the BMV conjecture~\cite{Sta13:Proof-BMV}.

Using standard methods from
high-dimensional probability~\cite{Tro15:Introduction-Matrix,vH16:Probability-High},
the trace inequality~\eqref{eqn:intro-matrix-mgf} leads to a
probabilistic comparison %
for the minimum eigenvalues:
\begin{equation} \label{eqn:intro-matrix-tail}
\Prob{ \lambda_{\min}(\mtx{Y}) \geq \Expect[ \lambda_{\min}(\mtx{Z}) ] - t }
	\leq 2d \cdot \econst^{-t^2 / (2 \sigma_*^2(\mtx{Z}))},
\end{equation}
where $d$ is the matrix dimension. The weak variance $\sigma_*^2(\mtx{Z}) \coloneqq
\max_{\norm{\vct{u}}=1} \Var[\vct{u}^* \mtx{Z} \vct{u}]$
controls the variance of $\lambda_{\min}(\mtx{Z})$.
The result~\eqref{eqn:intro-matrix-tail} is powerful enough to yield
\hilite{sharp, dimension-free} bounds for the minimum eigenvalue of some models.
See~\cref{thm:iid-intro} for the full statement of the comparison. %

To analyze the minimum eigenvalue $\lambda_{\min}(\mtx{Z})$ of the Gaussian matrix in~\eqref{eqn:intro-matrix-tail},
we have a bristling armamentarium of techniques at our disposal.
This methodology leads to short, conceptual proofs of
several important results from high-dimensional statistics (\cref{sec:scov}).
It also addresses a vexing open question from computational
linear algebra about very sparse random matrices (\cref{sec:subspace-inj}).

\begin{figure}[t]
\begin{center}
\includegraphics[height=2in]{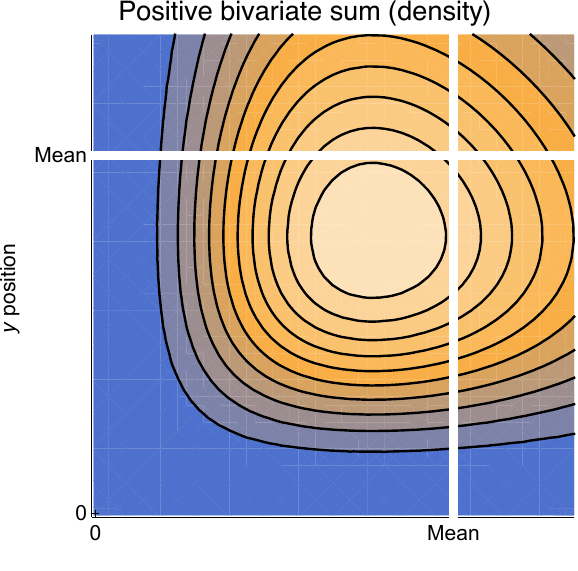} \hspace{0.25in}
\includegraphics[height=2in]{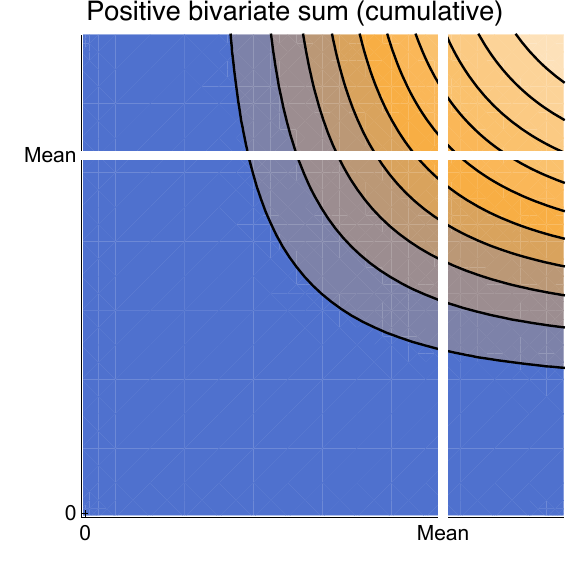} \\
\includegraphics[height=2in]{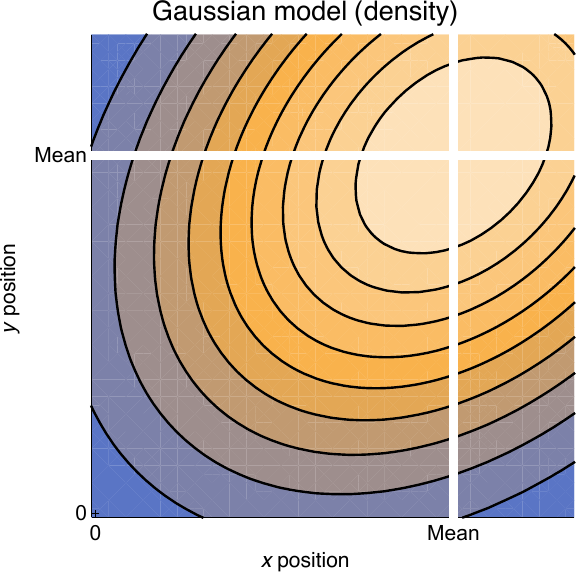} \hspace{0.25in}
\includegraphics[height=2in]{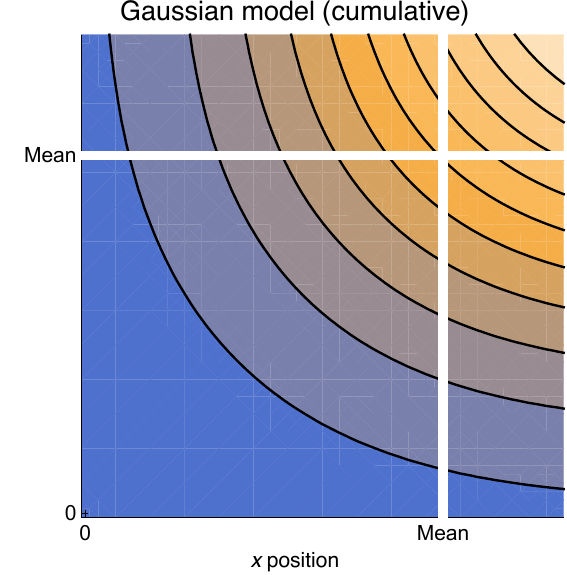}
\end{center}
\caption{\textbf{(Bivariate positive sum: Comparison).}
This figure illustrates the Gaussian comparison~\eqref{eqn:intro-matrix-comp}
for a $2 \times 2$ diagonal random matrix $\mtx{W}$ with independent entries.
\textbf{[Top]}
The bivariate distribution of a pair of independent positive sums.
\textbf{[Bottom]}
The distribution of the matching Gaussian model.
\textbf{[Left]}
Probability densities.
\textbf{[Right]}
Bivariate cumulative distribution functions.
Brighter colors correspond to higher probabilities.
The Gaussian comparison is valid southwest of the expectation (white lines).
More precisely, the comparison concerns %
the \hilite{minimum} of the two coordinates, whose distribution looks similar
to the univariate case (\cref{fig:comparison-1d}).}
\label{fig:comparison-2d}
\end{figure}

\subsection{Roadmap}

\Cref{sec:main-results} states two comparison theorems
and discusses related work.
\Cref{sec:gaussians} provides background on Gaussian random matrices
that aids in the analysis of the comparison model.
\Cref{sec:designs,sec:scov,sec:subspace-inj} apply the main results
to several examples. %
\Cref{sec:scalar-pf} details a proof of~\eqref{eqn:intro-scalar-tail}
that generalizes to matrices.
Last, \cref{sec:psd-weights,sec:iid} establish the main results,
including~\eqref{eqn:intro-matrix-mgf} and~\eqref{eqn:intro-matrix-tail}. %

\subsection{Notation}

We use standard conventions from linear algebra and probability.
Nonlinear functions bind before the trace and the expectation.
We omit brackets enclosing an argument unless required.

Our results hold in both the real ($\F = \R$) and complex ($\F = \C$) setting;
we often suppress the field.
For a natural number $d \in \N$, the set $\M_d(\F)$ is the linear
space of $d \times d$ square matrices with entries in the field $\F$.
The set $\Sym_d(\F)$ denotes the real-linear space of \term{self-adjoint} (i.e., conjugate symmetric)
matrices in $\M_d(\F)$.
As usual, $\trace[\cdot]$ returns the (unnormalized) trace of a square matrix.

The symbol ${}^\transp$ denotes the transpose of a vector or matrix,
while ${}^*$ denotes the conjugate transpose. %
The $\ell_2$ inner product is conjugate linear in the first coordinate:
$\ip{ \vct{u} }{ \vct{v} } \coloneqq \vct{u}^* \vct{v}$ for $\vct{u}, \vct{v} \in \F^d$.
Likewise, we employ the trace inner product $\ip{ \mtx{M} }{ \mtx{A} } \coloneqq \trace[ \mtx{M}^* \mtx{A} ]$
for matrices $\mtx{M}, \mtx{A}$ with the same dimensions.
The symbol $\norm{\cdot}$ denotes the $\ell_2$ norm on vectors
or the associated $\ell_2$ operator norm on matrices.
The Frobenius norm $\fnorm{\cdot}$ also appears regularly.

In heuristic discussions, we may employ the orders $\lesssim$ and $\gtrsim$,
which suppress universal constants.
Both $\asymp$ and $\mathcal{O}(\cdot)$ mean equivalent
within constant factors.  The symbol $\ll$ means
``much less than''.

\section{Main results and related work}
\label{sec:main-results}

This section states our two main comparison theorems.
The first concerns the sum of psd matrices with random weights.
The second theorem concerns a sum of iid random psd matrices
described in \cref{sec:intro-random-psd}.
Afterward, we present a simple first example,
and we discuss some related work.

\subsection{Gaussian comparison: Randomly weighted sum of psd matrices}
\label{sec:sampling-intro}

The first result treats a random matrix model where we randomly
weight the terms in a sum of fixed psd matrices.

\begin{theorem}[Comparison: Randomly weighted psd matrices] \label{thm:sampling-intro}
Fix a system of \hilite{psd} matrices $(\mtx{A}_1, \dots, \mtx{A}_n)$, real or complex, %
with common dimension $d$.
Consider an \hilite{independent} family $(W_1, \dots, W_n)$ of \hilite{nonnegative}
real random variables with two finite moments: $W_i \geq 0$ and $\Expect[ W_i^2 ] < +\infty$.
Define the random matrices
\begin{equation} \label{eqn:sampling-comp}
\begin{aligned}
\mtx{Y} &= \sum_{i=1}^n W_i \mtx{A}_i; \\
\mtx{Z} &= %
	\sum_{i=1}^n X_i \mtx{A}_i
&&\text{where $X_i \sim \normal_{\R}(\Expect[ W_i ], \ \Expect[ W_i^2 ])$ are \hilite{independent}.}
\end{aligned}
\end{equation}
Then there is a stochastic comparison between the minimum eigenvalues of $\mtx{Y}$ and $\mtx{Z}$.
In expectation,
\[
\Expect \lambda_{\min}(\mtx{Y})
	\geq %
	\Expect \lambda_{\min}( \mtx{Z} ) - \sqrt{2 \smash{\sigma_{*}^2}(\mtx{Z}) \log d}. %
\]
In addition, for all $t \geq 0$, the lower tail satisfies
\[
\Prob{ \lambda_{\min}(\mtx{Y}) \leq \Expect \lambda_{\min}( \mtx{Z} ) - t }
	\leq d \cdot \econst^{-t^2 / (2 \sigma_*^2(\mtx{Z}))}.
\]
For this model, the weak variance %
takes the form
\[
\sigma_*^2(\mtx{Z}) = \max\nolimits_{\norm{\vct{u}}=1} \sum_{i=1}^n \Expect[ W_i^2 ] \cdot (\vct{u}^* \mtx{A}_i \vct{u})^2.
\]
\end{theorem}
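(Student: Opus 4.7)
The strategy is to first establish a trace MGF comparison $\Expect \trace \econst^{-\theta \mtx{Y}} \leq \Expect \trace \econst^{-\theta \mtx{Z}}$ for every $\theta \geq 0$, and then combine it with the matrix Laplace transform method and Gaussian concentration for $\lambda_{\min}(\mtx{Z})$.

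For the trace MGF comparison, I would swap $W_i$ with $X_i$ one variable at a time. Conditioning on all weights except $W_1$, the surviving randomness lies in a self-adjoint matrix $\mtx{B} \coloneqq \sum_{j \geq 2} V_j \mtx{A}_j$, where each $V_j$ is either the original $W_j$ or its Gaussian replacement $X_j$. Stahl's theorem, applied to the Hermitian matrix $-\theta \mtx{B}$ and the psd matrix $\theta \mtx{A}_1$, supplies a positive Borel measure $\mu = \mu_{\theta, \mtx{B}}$ of compact support (contained in $[0, \theta \norm{\mtx{A}_1}]$) such that
\[
\trace \exp\bigl(-\theta (w \mtx{A}_1 + \mtx{B})\bigr) = \int_0^\infty \econst^{-ws} \idiff \mu(s) \qquad \text{for every } w \in \R.
\]
The elementary one-summand bound $\Expect \econst^{-s W_1} \leq \exp(-s \Expect W_1 + \tfrac{1}{2} s^2 \Expect W_1^2) = \Expect \econst^{-s X_1}$ for $s \geq 0$ (a single-summand version of~\eqref{eqn:intro-scalar-mgf}), integrated against $\mu$ via Fubini, yields
\[
\Expect_{W_1} \trace \exp\bigl(-\theta (W_1 \mtx{A}_1 + \mtx{B})\bigr) \leq \Expect_{X_1} \trace \exp\bigl(-\theta(X_1 \mtx{A}_1 + \mtx{B})\bigr).
\]
Iterating this replacement for $i = 1, \ldots, n$ and applying the tower property produces the full trace MGF comparison.

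The remainder is routine. Parameterizing $\mtx{Z}$ by iid standard Gaussians $\vct{g}$ makes $\vct{g} \mapsto \lambda_{\min}(\mtx{Z})$ a $\sigma_*(\mtx{Z})$-Lipschitz map (by Weyl's inequality plus Cauchy--Schwarz against the vector $(\vct{u}^* \mtx{A}_i \vct{u})_i$), so Borell--TIS yields $\Expect \econst^{-\theta \lambda_{\min}(\mtx{Z})} \leq \exp\bigl(-\theta \Expect \lambda_{\min}(\mtx{Z}) + \tfrac{1}{2} \theta^2 \sigma_*^2(\mtx{Z})\bigr)$. Chaining this with $\Expect \econst^{-\theta \lambda_{\min}(\mtx{Y})} \leq \Expect \trace \econst^{-\theta \mtx{Y}} \leq \Expect \trace \econst^{-\theta \mtx{Z}} \leq d \cdot \Expect \econst^{-\theta \lambda_{\min}(\mtx{Z})}$ and optimizing over $\theta \geq 0$ delivers both the expectation bound (via Jensen applied to $-\theta^{-1} \log$) and the tail bound (via Markov). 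The weak-variance formula is immediate from $\Var[\vct{u}^* \mtx{Z} \vct{u}] = \sum_i (\vct{u}^* \mtx{A}_i \vct{u})^2 \Expect[W_i^2]$.

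The main obstacle is reconciling the swap argument with the sign issue: the scalar MGF bound $\Expect \econst^{-sW_1} \leq \Expect \econst^{-sX_1}$ is only guaranteed for \emph{nonnegative} $s$, yet the Gaussian weight $X_1$ itself can be negative, so the Laplace representation $\int_0^\infty \econst^{-ws} \idiff\mu(s)$ must be interpreted for $w < 0$. Compactness of the Stahl measure saves the argument: the integral converges for every real $w$ and agrees with $\trace \exp(-\theta(w\mtx{A}_1 + \mtx{B}))$ on $[0, \infty)$, hence everywhere by analytic continuation. This legitimates the Fubini step when integrating against the sign-indefinite $X_1$ and lets the one-at-a-time replacement carry through.
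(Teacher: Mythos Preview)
Your argument is correct, and the second half (matrix Laplace transform plus Gaussian concentration for $\lambda_{\min}(\mtx{Z})$) matches the paper exactly.  The route to the trace mgf comparison is genuinely different, though.  The paper does not swap one coordinate at a time; instead it interpolates along the path $\mtx{Y}_s = \sqrt{s}\,(\mtx{Y} - \Expect\mtx{Y}) + \sqrt{1-s}\,(\mtx{Z} - \Expect\mtx{Z}) + \Expect\mtx{Y}$ and shows $\tfrac{\diff}{\diff s}\,\Expect\trace\econst^{-\theta\mtx{Y}_s} \leq 0$ by pairing Gaussian integration by parts (for the $\mtx{Z}$ contribution) with a Stein's-method covariance bound for completely monotone functions (\cref{lem:cov-cm}, which uses exchangeable pairs, the Hermite divided-difference formula, and an association inequality) for the $\mtx{Y}$ contribution.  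Stahl's theorem enters only to certify that $w \mapsto \trace\econst^{\mtx{B}_i - \sqrt{s}\,w\mtx{A}_i}$ is completely monotone.  Your Lindeberg replacement instead exploits Stahl's theorem through its Bernstein representation, reducing each swap to the scalar bound $\Expect\econst^{-sW_i} \leq \Expect\econst^{-sX_i}$ integrated against the compactly supported Stahl measure; the compact-support/analytic-continuation observation you flag is exactly what makes the Fubini step with the sign-indefinite $X_i$ legitimate.  Your approach is shorter and avoids the Stein's-method machinery entirely.  On the other hand, the paper's argument only requires complete monotonicity to order four, so it transfers verbatim (via \cref{fact:heina}) to one-sided polynomial moments $\trace(\,\cdot\,)_-^p$ for $p \geq 4$ (\cref{prop:psd-weights-poly}); your Bernstein-representation route is tied to the Laplace-transform structure of the exponential and does not obviously extend to that setting.
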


The proof of \cref{thm:sampling-intro} appears in \cref{sec:psd-weights}.
The short argument employs tools
from Stein's method~\cite{Ros11:Fundamentals-Steins,CGS11:Normal-Approximation};
it also relies on Stahl's theorem~\cite{Sta13:Proof-BMV}.
The weak variance $\sigma_*^2(\mtx{Z})$ arises from
Gaussian concentration~\cite[Thm.~5.5]{BLM13:Concentration-Inequalities}
for the minimum eigenvalue; see~\cref{fact:gauss-lip}.

A valuable feature of \cref{thm:sampling-intro} is that
the psd coefficient matrices $\mtx{A}_i$ are arbitrary,
and the random weights $W_i$ can have different distributions.
Of particular interest is the case where $W_i \sim \bernoulli(p_i)$,
which models a random sample from a family of fixed psd matrices.
Compare the construction of the Gaussian comparison model~\eqref{eqn:sampling-comp}
with the scalar case outlined in~\eqref{eqn:intro-scalar-mgf}.

\Cref{sec:designs} applies \Cref{thm:sampling-intro} to the geometric problem of
sampling from a complex projective design.  This result complements
Rudelson's theorem~\cite{Rud99:Random-Vectors} on sampling from
an isotropic system.

\subsection{Second moments and Gaussians}
\label{sec:intro-mom}

To state our next result compactly, we introduce notation that describes
the second moments of a random matrix model.

\begin{definition}[Random matrix: Second moments, self-adjoint case]
\label{def:second-moments}
Let $\mtx{X} \in\Sym_d$ be a random \hilite{self-adjoint} matrix.
Define the \term{second moment function} and
the \term{variance function} of the random matrix:
\begin{equation*}
\begin{aligned}
\Mo[\mtx{X}](\mtx{M}) &\coloneqq \Expect{} \abssq{\ip{\mtx{M}}{\mtx{X}}} = \Expect[ (\trace[ \mtx{MX} ])^2 ], \\
\Varo[\mtx{X}](\mtx{M}) &\coloneqq \Var[ \ip{\mtx{M}}{\mtx{X}} ] = \Var[ \trace[\mtx{MX}] ]
\end{aligned}
\qquad\text{for \hilite{self-adjoint} $\mtx{M} \in \Sym_d$.}
\end{equation*}
These functions pack up the second-order statistics of the linear marginals of the random matrix.
\end{definition}

Recall that a (real or complex) random matrix is \term{Gaussian} if
and only if the real and imaginary parts of the entries
compose a jointly Gaussian family of real random variables;
e.g., see~\cite[Sec.~2.1.2]{BvH24:Universality-Sharp}.
A self-adjoint Gaussian matrix $\mtx{Z} \in \Sym_d$ is uniquely determined by
its expectation $\Expect[ \mtx{Z} ]$ and variance function $\Varo[\mtx{Z}]$.
Given a self-adjoint matrix $\mtx{\Delta} \in \Sym_d$
and a positive quadratic form $\mathsf{V} : \Sym_d \to \R_+$,
we write $\normal(\mtx{\Delta}, \mathsf{V})$ for the unique
Gaussian distribution on self-adjoint matrices with
expectation $\mtx{\Delta}$ and variance function $\mathsf{V}$.
For more background on Gaussian random matrices, see \cref{sec:gaussians}.

\subsection{Gaussian comparison: Sum of iid psd random matrices}

Our second result provides a Gaussian comparison for a sum
of iid random psd matrices, the model described in the introduction.

\begin{theorem}[Comparison: Sum of iid psd matrices] \label{thm:iid-intro}
Let $\mtx{W}$ be a random \hilite{psd} matrix, real or complex, with dimension $d$
and with two finite moments: $\Expect \norm{\mtx{W}}^2 < + \infty$.
Consider any \hilite{self-adjoint Gaussian} matrix $\mtx{X}$ with dimension $d$
whose first- and second-order statistics satisfy
\begin{equation} \label{eqn:iid-intro-match}
\Expect[ \mtx{X} ] = \Expect[ \mtx{W} ]
\quad\text{and}\quad
\Varo[ \mtx{X} ] \geq \Mo[ \mtx{W} ].
\end{equation}
For a natural number $n \in \N$, define the random matrices
\begin{equation} \label{eqn:iid-comp}
\begin{aligned}
\mtx{Y} &= %
\sum_{i=1}^n \mtx{W}_i
&&\text{where $\mtx{W}_i \sim \mtx{W}$ iid;} \\
\mtx{Z} &= %
\sum_{i=1}^n \mtx{X}_i
&&\text{where $\mtx{Z}_i \sim \mtx{X}$ iid.}%
\end{aligned}
\end{equation}
Then there is a stochastic comparison between the minimum eigenvalues of $\mtx{Y}$ and $\mtx{Z}$.
In expectation,
\begin{align} \label{eqn:intro-thm-iid-expect}
\Expect \lambda_{\min}(\mtx{Y})
	&\geq %
	\Expect \lambda_{\min}( \mtx{Z} ) - \sqrt{2 \smash{\sigma_*^2}(\mtx{Z}) \log(2d)}. %
\end{align}
In addition, for all $t \geq 0$, the lower tail satisfies
\begin{align} \label{eqn:intro-thm-iid-tail}
\Prob{ \lambda_{\min}(\mtx{Y}) \leq \Expect \lambda_{\min}( \mtx{Z} ) - t }
	&\leq 2d \cdot \econst^{-t^2 / (2 \sigma_*^2(\mtx{Z}))}. %
\end{align}
The weak variance %
is defined as
\begin{equation} \label{eqn:intro-weak-var}
\sigma_*^2(\mtx{Z}) \coloneqq \max\nolimits_{\norm{\vct{u}}=1} \Var[ \vct{u}^* \mtx{Z} \vct{u} ].
\end{equation}
\end{theorem}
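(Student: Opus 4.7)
The plan is to mirror the scalar blueprint sketched in~\cref{sec:intro-random-psd}. First I would establish the trace moment generating function comparison
\[
\Expect \trace \exp(-\theta \mtx{Y}) \leq 2 \Expect \trace \exp(-\theta \mtx{Z}) \qquad (\theta \geq 0)
\]
anticipated in~\eqref{eqn:intro-matrix-mgf}, and then combine it with the Laplace transform method and Gaussian concentration for $\lambda_{\min}(\mtx{Z})$.

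The core of the argument is the trace MGF comparison. At the scalar level, the analogous inequality is driven by the elementary bound $\econst^{-x} \leq 1 - x + \tfrac{1}{2} x^2$ for $x \geq 0$, which together with the matching conditions $\Expect \mtx{X} = \Expect \mtx{W}$ and $\Varo[\mtx{X}] \geq \Mo[\mtx{W}]$ from~\eqref{eqn:iid-intro-match} yields $\Expect \exp(-\theta \vct{u}^* \mtx{W} \vct{u}) \leq \Expect \exp(-\theta \vct{u}^* \mtx{X} \vct{u})$ for every fixed unit vector $\vct{u}$. To lift this from quadratic forms to the trace exponential, I would invoke Stahl's theorem~\cite{Sta13:Proof-BMV}: for any fixed self-adjoint $\mtx{A}$ and any psd $\mtx{W}$, the map $\theta \mapsto \trace \exp(\mtx{A} - \theta \mtx{W})$ is the Laplace transform of a positive Borel measure on $[0, \infty)$. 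The positivity of the representing measure is precisely what transports the quadratic scalar envelope to the matrix trace level. A conditioning (tower-property) argument over the iid summands, combined with a suitable truncation or symmetrization of the indefinite Gaussian $\mtx{X}$, then upgrades the single-summand version to the full sum and produces the factor of~$2$ in the MGF bound.

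Granted the trace MGF comparison, the tail bound~\eqref{eqn:intro-thm-iid-tail} is routine. Using $\econst^{-\theta \lambda_{\min}(\mtx{Y})} \leq \trace \exp(-\theta \mtx{Y})$ for $\theta \geq 0$, Markov's inequality and the MGF comparison give
\[
\Prob{ \lambda_{\min}(\mtx{Y}) \leq s } \leq 2 \econst^{\theta s} \cdot \Expect \trace \exp(-\theta \mtx{Z}).
\]
Gaussian concentration applied to the $\sigma_*(\mtx{Z})$-Lipschitz map $\mtx{Z} \mapsto \lambda_{\min}(\mtx{Z})$ makes $\lambda_{\min}(\mtx{Z})$ sub-Gaussian around its mean with proxy $\sigma_*^2(\mtx{Z})$, so $\Expect \trace \exp(-\theta \mtx{Z}) \leq d \exp(-\theta \Expect \lambda_{\min}(\mtx{Z}) + \tfrac{1}{2} \theta^2 \sigma_*^2(\mtx{Z}))$. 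Substituting $s = \Expect \lambda_{\min}(\mtx{Z}) - t$ and optimizing over $\theta \geq 0$ at $\theta = t / \sigma_*^2(\mtx{Z})$ produces~\eqref{eqn:intro-thm-iid-tail} with the stated prefactor $2d$. The expectation bound~\eqref{eqn:intro-thm-iid-expect} then follows from the standard sub-Gaussian tail-to-mean integration, which converts a tail of the form $K \econst^{-t^2/(2v)}$ into an expectation deficit $\sqrt{2 v \log K}$ with $K = 2d$ and $v = \sigma_*^2(\mtx{Z})$.

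The hard part is the trace MGF comparison. There is no pointwise matrix analogue of $\econst^{-x} \leq 1 - x + \tfrac{1}{2} x^2$ for the exponential of a sum of noncommuting self-adjoint matrices, so one cannot simply Taylor-expand and match moments coordinate by coordinate. Stahl's theorem is the critical bridge that converts the matrix trace exponential into a genuine scalar Laplace transform of a positive measure, at which point the one-dimensional moment comparison can be applied. The delicate bookkeeping involves tracking the factor of~$2$ that handles the indefiniteness of $\mtx{X}$ and verifying that the single-summand comparison survives the conditioning that stitches the iid summands together.
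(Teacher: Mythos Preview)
Your downstream argument---bounding $\trace\econst^{-\theta\mtx{Z}}$ by $d\,\econst^{-\theta\Expect\lambda_{\min}(\mtx{Z})+\theta^2\sigma_*^2(\mtx{Z})/2}$ via Gaussian concentration and then optimizing $\theta$---matches the paper exactly. The gap is in the trace mgf comparison itself, where your sketch does not supply a working mechanism.

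The quadratic envelope $\econst^{-x}\le 1-x+\tfrac12 x^2$ combined with Stahl's representation does not by itself yield the trace comparison. Stahl tells you that for \emph{fixed} psd $\mtx{A}$ and self-adjoint $\mtx{B}$, the scalar function $w\mapsto\trace\econst^{\mtx{B}-w\mtx{A}}$ is completely monotone in the \emph{scalar} variable $w$. To exploit this you must be able to isolate a single scalar weight multiplying a fixed psd direction. That is exactly the structure of \cref{thm:sampling-intro}, and the paper's proof of \cref{prop:psd-weight-mgf} uses it via an interpolation path $\mtx{Y}_s=\sqrt{s}(\mtx{X}-\Expect\mtx{X})+\sqrt{1-s}\,\mtx{Z}+\mtx{\Delta}$: Gaussian integration by parts handles the $\mtx{Z}$-derivative, while Stahl plus a Stein-type covariance bound (\cref{lem:cov-cm}) handles the $\mtx{X}$-derivative. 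The iid model~\eqref{eqn:iid-comp} does \emph{not} have this structure---the summands $\mtx{W}_i$ are random matrices, not scalar weights times fixed matrices---so your proposed ``tower-property over the summands'' has no hook for Stahl. The paper circumvents this by an empirical approximation: draw a large sample $(\mtx{A}_1,\dots,\mtx{A}_N)$ from the law of $\mtx{W}$, approximate $\mtx{Y}$ by a multinomial mixture $\sum_i\delta_i\mtx{A}_i$, Poissonize the multinomial to obtain \emph{independent} scalar weights, apply \cref{prop:psd-weight-mgf}, and pass to the limit $N\to\infty$.

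Your attribution of the factor~$2$ is also incorrect. It has nothing to do with the indefiniteness of $\mtx{X}$; it arises in the Poissonization step (\cref{lem:poisson}), from the fact that $k$ is the median of the $\poisson(k)$ distribution, so $\Probe\{\sum Q_i\le k\}\ge\tfrac12$. The Gaussian monotonicity step that accommodates $\Varo[\mtx{X}]\ge\Mo[\mtx{W}]$ (rather than equality) is a separate, lossless application of \cref{prop:gauss-monotone} and introduces no constant.
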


\cref{thm:iid-intro}
is a (nontrivial) corollary of \cref{thm:sampling-intro}.
For the proof, the strategy is to draw a sample from the distribution
of the summand $\mtx{W}$ and to approximate the iid sum
$\mtx{Y}$ by extracting a subsample.  See~\cref{sec:iid} for the grisly details.

Let us expand on the difference between the two comparison theorems.
While \cref{thm:sampling-intro} forces the summands
to take the simple form $W_i \mtx{A}_i$, %
each summand has its own distribution.
In contrast, \cref{thm:iid-intro} comprehends a general
distribution $\mtx{W}$, but it requires the summands
to be identical copies of $\mtx{W}$.  We permit inequality
in the comparison~\eqref{eqn:iid-intro-match}
to facilitate the application of the result.
The Gaussian comparison model~\eqref{eqn:iid-comp}
for the sum $\mtx{Y}$ can also be written in the form
\begin{equation} \label{eqn:iid-comp-simple}
\mtx{Z} = n \cdot (\Expect \mtx{X}) + \sqrt{n} \cdot (\mtx{X} - \Expect \mtx{X})
	\sim \normal(n \cdot \Expect[\mtx{X}],\ n \cdot \Varo[\mtx{X}] ).
\end{equation}
The statement~\eqref{eqn:iid-comp-simple} follows
from the stability properties of the Gaussian.
Compare with~\eqref{eqn:intro-scalar-mgf}.

\Cref{thm:iid-intro} supports many applications.  As a first example,
\cref{sec:wishart} provides a bound for the minimum eigenvalue of
a Wishart matrix.  \Cref{sec:scov} develops some old and new results
on the minimum eigenvalue of a sample covariance matrix.
\Cref{sec:subspace-inj} studies random linear embeddings,
and it resolves a recalcitrant problem on the injectivity
properties of very sparse random matrices.

\begin{conjecture}[Non-iid sums]
It is natural to conjecture a comparison between the random matrices
\begin{equation*}
\begin{aligned}
\mtx{Y} &= \sum_{i=1}^n \mtx{W}_i
	&& \text{where $\mtx{W}_i$ are psd and independent;} \\
\mtx{Z} &= \sum_{i=1}^n \mtx{X}_i
	&& \text{where $\mtx{X}_i \sim \normal(\Expect[ \mtx{W}_i],\ \Mo[\mtx{W}_i])$ are independent.}
\end{aligned}
\end{equation*}
In this setting, we were only able to establish weak variants of~\eqref{eqn:intro-thm-iid-expect}
and~\eqref{eqn:intro-thm-iid-tail}, but we believe that similar statements should remain valid.
\end{conjecture}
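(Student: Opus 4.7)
The plan is to adapt the sampling-based reduction used in the proof of \cref{thm:iid-intro} to the non-iid setting. Since each $\mtx{W}_i$ has its own distribution, I would treat every summand separately: for each $i$, draw $N$ iid copies $\mtx{W}_{i,1}, \dots, \mtx{W}_{i,N}$ of $\mtx{W}_i$, together with independent Bernoulli indicators $B_{i,j} \sim \bernoulli(1/N)$, and form the subsampled approximation
\[
\hat{\mtx{Y}}_N = \sum_{i=1}^n \sum_{j=1}^N B_{i,j}\, \mtx{W}_{i,j}.
\]
Conditioning on $\{\mtx{W}_{i,j}\}$, this is precisely a sum of fixed psd matrices with independent nonnegative scalar weights, so \cref{thm:sampling-intro} applies and yields a Gaussian comparison
\[
\hat{\mtx{Z}}_N = \sum_{i,j} X_{i,j}\, \mtx{W}_{i,j}, \qquad X_{i,j} \sim \normal_{\R}(1/N,\, 1/N)\ \text{independent.}
\]
A direct variance calculation shows that for each fixed $i$, the block $\sum_j X_{i,j} \mtx{W}_{i,j}$ has mean $\Expect \mtx{W}_i$ and variance function $(1+1/N)\,\Mo[\mtx{W}_i] - (1/N)(\Expect \trace[\cdot\,\mtx{W}_i])^2$; by a matrix CLT (Cramér--Wold plus Lindeberg), it converges in distribution as $N \to \infty$ to the Gaussian summand $\mtx{X}_i \sim \normal(\Expect \mtx{W}_i,\, \Mo[\mtx{W}_i])$ of the conjecture. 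Independence across $i$ is preserved, so the Gaussian side of the comparison converges exactly to the target $\mtx{Z}$.

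The principal obstacle lies on the other side. For each $i$, the count $\sum_j B_{i,j}$ is $\binomial(N, 1/N)$, which converges to $\poisson(1)$; hence $\hat{\mtx{Y}}_N$ contributes a random (Poisson-distributed) number of copies of $\mtx{W}_i$ in each block, rather than exactly one. In the iid analysis of \cref{sec:iid}, the analogous count is $\poisson(n)$, which concentrates sharply about $n$, and the resulting fluctuation can be absorbed by the ``grisly'' technical argument. In the non-iid setting this concentration is lost: each block independently produces a $\poisson(1)$ fluctuation with $\Prob{\sum_j B_{i,j} = 0} = \econst^{-1}$, and a missing summand can drive $\lambda_{\min}$ much below $\lambda_{\min}(\mtx{Y})$. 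Thus the route above delivers a bound on the minimum eigenvalue of a compound-Poisson mixture rather than of $\mtx{Y}$ itself, and no obvious stochastic comparison recovers $\mtx{Y}$ from it.

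Circumventing this difficulty appears to require one of two new ingredients: (i) a sampling scheme that places exactly one representative from each distribution while keeping the selector weights independent (the obvious fix, conditioning on $\sum_j B_{i,j} = 1$, destroys the independence that drives \cref{thm:sampling-intro}); or (ii) a direct Stein-type argument built on a version of Stahl's theorem that applies to several independent \emph{matrix-valued} psd perturbations simultaneously, so that one can swap each $\mtx{W}_i$ for $\mtx{X}_i$ in a trace MGF inequality without ever passing through scalar weights. Developing either tool is, I expect, the decisive technical barrier. Once such a single-summand matrix comparison of the form $\Expect \trace \exp(\mtx{H} - \mtx{W}_i) \leq \Expect \trace \exp(\mtx{H} - \mtx{X}_i)$ is in hand, the remaining machinery --- iterated application to produce the trace MGF bound, the standard Laplace-transform tail argument, and Gaussian Lipschitz concentration of $\lambda_{\min}$ via \cref{fact:gauss-lip} --- carries over without modification, and yields the conjectured expectation and tail estimates with weak variance $\sigma_*^2(\mtx{Z}) = \max_{\norm{\vct u} = 1} \sum_i \Var[\vct u^* \mtx{X}_i \vct u]$.
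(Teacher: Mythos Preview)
The statement you are addressing is a \emph{conjecture}, not a theorem: the paper explicitly says ``we were only able to establish weak variants\ldots but we believe that similar statements should remain valid,'' and offers no proof. So there is no ``paper's own proof'' to compare against.

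Your proposal is accordingly not a proof either, and you correctly recognize this. Your diagnosis of the obstruction is sound and matches the structural reason the paper's machinery stalls here. In the iid proof (\cref{sec:iid}), the empirical approximation draws $n$ samples from a \emph{single} distribution and selects $k$ of them via a $\multinomial(k,n)$ vector; after Poissonization the total count is $\poisson(k)$, whose median is $k$, and this is what yields the clean factor of $2$ in \cref{lem:poisson}. When you try to run the same scheme blockwise with one summand per block, each block's count becomes $\poisson(1)$ (or $\binomial(N,1/N)$ pre-limit), and the event that a block contributes zero copies has probability $\econst^{-1}$; as you note, a missing psd summand can only decrease $\lambda_{\min}$, so there is no monotone comparison back to $\mtx{Y}$. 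Conditioning to force exactly one copy per block destroys the independence that \cref{thm:sampling-intro} requires. This is precisely the gap.

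Your two proposed escape routes are reasonable but genuinely open. Route (ii) in particular would amount to a matrix-argument analog of \cref{lem:cov-cm}: a bound of the form $\Expect\trace\econst^{-\theta(\mtx{B}+\mtx{W})} \le \Expect\trace\econst^{-\theta(\mtx{B}+\mtx{X})}$ for fixed self-adjoint $\mtx{B}$, psd random $\mtx{W}$, and the matched Gaussian $\mtx{X}$. Stahl's theorem (\cref{fact:bmv}) gives complete monotonicity only along a \emph{scalar} ray $w\mapsto\trace\econst^{\mtx{B}-w\mtx{A}}$, and there is no known multivariate or operator-valued extension that would let you run the covariance/association argument of \cref{lem:cov-cm} with a matrix-valued perturbation. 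Absent such a tool, the conjecture remains open, consistent with the paper's own assessment.
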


\subsection{Example: Wishart matrix}
\label{sec:wishart}

As a first example, we treat the minimum eigenvalue %
of a standard real Wishart matrix~\cite[Sec.~3.2]{Mui82:Aspects-Multivariate}.
Introduce the random, rank-one psd matrix
\[
\mtx{W} = \vct{gg}^\transp \in \Sym_d(\R)
\quad\text{where $\vct{g} \sim \normal_{\R}(\vct{0}, \Id_d)$.}
\]
Draw independent copies $\mtx{W}_1, \dots, \mtx{W}_n$ of
the random matrix $\mtx{W}$, and form the sum:
\begin{equation} \label{eqn:wishart}
\mtx{Y} = \sum_{i=1}^n \mtx{W}_i
\sim \textsc{wishart}_{\R}(\Id_d, n). %
\end{equation}
After a calculation of the first and second moments of $\mtx{W}$, %
\cref{thm:iid-intro} furnishes a comparison between the Wishart matrix $\mtx{Y}$
and the Gaussian matrix
\begin{equation} \label{eqn:wishart-comp}
\mtx{Z} = n \cdot \Id_d + \gamma \sqrt{n} \cdot \Id_d + \sqrt{n} \cdot \mtx{G}_{\goe} \in \Sym_d(\R),
\end{equation}
where $\gamma \sim \normal_{\R}(0,1)$ and $\mtx{G}_{\goe}$ is drawn independently
from the (unnormalized) Gaussian orthogonal ensemble (GOE); %
see~\cref{sec:goe} for details. %
Exploiting standard facts about the GOE matrix~\cite[Exer.~6.48]{AS17:Alice-Bob}, we find that the minimum eigenvalue and the weak variance satisfy
\[
\Expect \lambda_{\min}(\mtx{Z}) \geq n - 2 \sqrt{dn}
\qquad\text{and}\qquad
\sigma_{*}^2(\mtx{Z}) \leq 3n.
\]
Therefore, \cref{thm:iid-intro} yields the explicit, nonasymptotic bound
\begin{equation} \label{eqn:wishart-nonasymp}
\Expect \lambda_{\min}(\mtx{Y}) \geq n - 2\sqrt{dn} - \sqrt{6 n \log(2d)}.
\end{equation}
The bound is nontrivial when $n \geq \big(2 \sqrt{d} + \sqrt{6 \log(2d)} \big)^2$.
In particular, it suffices that $n \gtrsim d$.

How tight is the inequality~\eqref{eqn:wishart-nonasymp}?  Rescale by the number $n$ of samples,
and introduce the aspect ratio $\varrho \coloneq d/n \in (0,1]$. %
\hilite{When $d, n \to \infty$ with the ratio $\varrho$ fixed}, we determine that
\[
\Expect \lambda_{\min}(n^{-1} \mtx{Y}) \geq 1 - 2 \sqrt{ \varrho } - \sqrt{6 \log (2d) / n}
	\to 1 - 2 \sqrt{ \varrho }.
\]
In this regime, the sharp asymptotic~\cite{BY93:Limit-Smallest} is
\[
\lambda_{\min}(n^{-1} \mtx{Y}) \to 1 - 2 \sqrt{\varrho} + \varrho
\quad\text{almost surely}.
\]
When the aspect ratio $\varrho$ is small (that is, $n \gg d$),
the bound~\eqref{eqn:wishart-nonasymp} %
is correct to first order, including the numerical constant.
On the other hand, the bound is only active inside the regime where $n \geq 4d$,
so it does not speak to the more challenging case
where $n \approx d$. %

\subsection{Nonexample: Wishart matrix}

Suppose that we apply \cref{thm:iid-intro} directly to the random matrix
$\mtx{Y} \sim \textsc{wishart}_{\R}(\Id_d, n)$ %
without a decomposition into rank-one terms.
(That is, we set $\mtx{W} \sim \textsc{wishart}_{\R}(\Id_d, n)$
and add only a single copy.)
After a calculation, we obtain the comparison model: %
\[
\mtx{Z} = n \cdot \Id_d + \gamma n \cdot \Id_d + \sqrt{n} \cdot \mtx{G}_{\goe}.
\]
The minimum eigenvalue satisfies the same bound as before,
but the weak variance is much larger:
\[
\Expect \lambda_{\min}(\mtx{Z}) \geq 1 - 2 \sqrt{dn}
\quad\text{and}\quad
\sigma_{*}^2(\mtx{Z}) \leq n^2 + 2n.
\]
\Cref{thm:iid-intro} results in the comparison
\[
\Expect \lambda_{\min}(\mtx{Y}) \geq n - 2 \sqrt{dn} - n \sqrt{2 (1 + 2/n) \log (2d)}.
\]
This inequality is always vacuous.

From this exercise, we discover that \cref{thm:iid-intro} furnishes different conclusions,
depending on how we decompose a random matrix as a sum of iid  psd terms.
Heuristically, we want to break the random matrix into the smallest pieces we can
to extract the most leverage from the theorem.

\subsection{Equivariance}

An attractive feature of \cref{thm:sampling-intro,thm:iid-intro}
is that the results are equivariant under linear transformations
(\cref{prop:mom-equi}).  In particular, if $\mtx{Z}$ is a Gaussian
comparison model for the random psd sum $\mtx{Y}$, then $\mtx{K}^* \mtx{Z} \mtx{K}$
is a Gaussian comparison model for $\mtx{K}^* \mtx{Y} \mtx{K}$.
In this statement, $\mtx{K}$ is any conformable matrix, not necessarily square.

This observation facilitates the computation of comparison models.
For instance, it is often convenient to transform the random matrix
$\mtx{Y}$ so that its expectation $\Expect \mtx{Y} = \Id$.
The equivariance property also plays a central role in the analysis
of randomized subspace injections (\cref{sec:subspace-inj}).

\subsection{Gaussian comparison versus matrix concentration}

When is the Gaussian comparison method effective?
Consider a self-adjoint Gaussian matrix $\mtx{Z} \in \Sym_d$.
Its minimum eigenvalue satisfies
\begin{equation} \label{eqn:lmin-lb}
\Expect \lambda_{\min}(\mtx{Z})
	\geq \lambda_{\min}(\Expect \mtx{Z}) - \Expect \norm{ \mtx{Z} - \Expect \mtx{Z} }.
\end{equation}
A sufficient condition for
the comparison~\eqref{eqn:intro-thm-iid-expect} %
between $\lambda_{\min}(\mtx{Y})$ and $\lambda_{\min}(\mtx{Z})$ to be informative
is that the right-hand side of~\eqref{eqn:lmin-lb} is positive.  To check the latter condition,
we need to understand the scale for the expected norm.

To that end, define the \term{matrix variance} statistic~\cite[Eqn.~(2.2.4)]{Tro15:Introduction-Matrix}:
\[
\sigma^2(\mtx{Z}) \coloneqq \norm{ \Expect{} (\mtx{Z} - \Expect \mtx{Z})^2 }.
\]
The matrix variance controls the expected norm of a self-adjoint, centered
Gaussian matrix: %
\begin{equation} \label{eqn:nck-intro}
\sqrt{(2/\pi) \, \sigma^2(\mtx{Z})} 
	\leq \Expect \norm{ \mtx{Z} - \Expect \mtx{Z} }
	\leq \sqrt{2 \sigma^2(\mtx{Z}) \log(2d)}.
\end{equation}
This statement~\eqref{eqn:nck-intro} is a variant of the
matrix Khinchin inequality (\cref{fact:nck}).
Both bounds in~\eqref{eqn:nck-intro} are saturated.  We must undertake a more sensitive analysis
to determine whether or not the expected norm includes the dimensional factor, $\log(2d)$.
The matrix variance compares with the weak variance:
\begin{equation} \label{eqn:intro-var-wvar}
\sigma_*^2(\mtx{Z}) \leq \sigma^2(\mtx{Z}) \leq d \cdot \sigma_*^2(\mtx{Z}).
\end{equation}
Both bounds in~\eqref{eqn:intro-var-wvar} are attainable.  In contrast to $\lambda_{\min}(\mtx{Z})$,
the statistics $\sigma^2(\mtx{Z})$ and $\sigma_*^2(\mtx{Z})$ are easy to compute,
as they only depend on the second moments of the random matrix.

We can obtain a coarse version of \cref{thm:iid-sum} by incorporating
the estimates~\eqref{eqn:lmin-lb}, \eqref{eqn:nck-intro}, and~\eqref{eqn:intro-var-wvar}.
For instance, the expectation bound~\eqref{eqn:intro-thm-iid-expect}
implies that
\begin{equation} \label{eqn:mtx-bern-lb}
\Expect \lambda_{\min}(\mtx{Y}) \geq \lambda_{\min}(\Expect \mtx{Z})
	- 2\sqrt{2 \sigma^2(\mtx{Z}) \log(2d)}. %
\end{equation}
In fact, an improvement of the estimate~\eqref{eqn:mtx-bern-lb} follows from simpler arguments
based on the scalar mgf inequality~\eqref{eqn:intro-scalar-mgf} and
matrix concentration tools~\cite{Tro15:Introduction-Matrix}; see \cref{app:epz} for details. %

The benefits of the Gaussian comparison method now come into sharper focus.
\Cref{thm:sampling-intro,thm:iid-intro} are most effective in case
\[
\lambda_{\min}(\Expect \mtx{Z})
	\gg \Expect \norm{ \mtx{Z} - \Expect \mtx{Z}}
	\approx \sigma(\mtx{Z})
	\gg \sigma_*(\mtx{Z}).
\]
But we need a finer scalpel than the matrix Khinchin inequality~\eqref{eqn:nck-intro}
to assess whether the expected norm includes the dimensional factor.

\begin{example}[Wishart: Matrix concentration]
Suppose we apply the matrix concentration bound~\eqref{eqn:mtx-bern-lb}
to the comparison model~\eqref{eqn:wishart-comp} for the Wishart matrix
$\mtx{Y} \sim \textsc{wishart}_{\R}(\Id_d, n)$.  The matrix variance statistic
$\sigma^2(\mtx{Z}) = n(d+2)$, and we arrive at the bound
\[
\Expect \lambda_{\min}(\mtx{Y}) \geq n - 2 \sqrt{2n (d+2) \log(2d)}.
\]
This bound, while nontrivial, does not capture the correct dimensional
dependence that is visible in~\eqref{eqn:wishart-nonasymp}.
We have squandered the valuable distributional information
provided by \cref{thm:iid-intro}.
\end{example}

\begin{remark}[Intrinsic freeness]
Recent research~\cite{Tro18:Second-Order-Matrix,BBvH23:Matrix-Concentration} %
has demonstrated that we can sometimes compare the eigenvalue distribution
of a Gaussian matrix with a free probability model, using simple summary statistics.
These intrinsic freeness results can be valuable for handling the Gaussian comparison model,
but they are unhelpful for the applications in this paper.
\end{remark}

\subsection{Related work}

We are not aware of previous results that are similar in detail
with the Gaussian comparisons from~\cref{thm:sampling-intro,thm:iid-intro}.
Nevertheless, the literature is scattered with breadcrumbs that
form a broken trail toward these new results.

\subsubsection{Universality laws for random matrices}

Compared with our work, the results that are closest in spirit
appear in a recent paper of Brailovskaya \& van Handel~\cite{BvH24:Universality-Sharp}.
Their paper contains quantitative universality theorems for sums of random matrices,
in the spirit of the Berry--Esseen theorem.  As we will explain,
their results are incomparable with the ones in this paper.

Brailovskaya \& van Handel consider an independent sum of
\hilite{self-adjoint} random matrices $\mtx{W}_i$,
\hilite{not necessarily psd or identically distributed}.
They compare the sum with a Gaussian model that shares the
same first- and second-order statistics,
as in the multivariate central limit theorem:
\[
\mtx{Y} = \sum_{i=1}^n \mtx{W}_i
\quad\text{and}\quad
\mtx{Z}' \sim \normal( \Expect[\mtx{Y}],\ \Varo[\mtx{Y}] ).
\]
In contrast, our approach requires the summands $\mtx{W}_i$ to be iid random psd matrices,
and it compares the sum with a slightly different Gaussian model.

Under appropriate conditions on the summands, Brailovskaya \& van Handel argue
that the eigenvalue distribution of $\mtx{Y}$ and the
eigenvalue distribution of the Gaussian model $\mtx{Z}'$ are similar.
Their results address both the spectral density and the spectral support.
As one may imagine, it appears to require stricter assumptions on
the statistics of the random matrices to ensure this strong affinity.

For instance, to control the minimum eigenvalue of the random sum,
Brailovskaya \& van Handel assume that the uniform bound statistic %
\[
R \coloneqq \Expect \max\nolimits_i \norm{\mtx{W}_i}^2 \ll \sigma(\mtx{Y}) (\log d)^{-3}.
\]
This condition ensures that each one of the summands makes a limited contribution to the sum.
The restriction is particularly important when the random matrices have few moments
or the summands have inhomogeneous distributions.
Under this surmise,
they prove~\cite[Thm.~2.8]{BvH24:Universality-Sharp}
that the expected distance between the minimum eigenvalues satisfies
\[
\Expect \abs{ \lambda_{\min}(\mtx{Y}) - \lambda_{\min}(\mtx{Z}') }
	\lesssim \sigma(\mtx{Z}')^{5/6} R^{1/6} \log d %
	+ \smash{\sigma_*(\mtx{Z}')} (\log d)^{1/2}.
\]
As in the present paper, the second term arises from Gaussian concentration.
To understand the first term, recall from~\eqref{eqn:nck-intro} that the scale
for the minimum eigenvalue of $\Expect \lambda_{\min}(\mtx{Z}' - \Expect \mtx{Z}')$
is the statistic $\sigma(\mtx{Z}')$.  In some examples,
the factor $R^{1/6} \log d$ submerges the first term below this level.

The results of Brailovskaya \& van Handel are powerful and wide ranging.
For some of the applications we consider in this paper, however,
their approach produces several parasitic logarithmic factors
that make it impossible to reach the optimal bounds.
These factors are particularly significant
in applications to computational mathematics (\cref{sec:subspace-inj}),
where constants and logarithms matter.

As with our Gaussian comparison theorems, the proof strategy in the paper of
Brailovskaya \& van Handel is based on techniques from Stein's method.
While there are small points of similarity (e.g., the use of interpolation),
our technical apparatus follows an independent design.

\subsubsection{An independent sum of random psd matrices}
\label{sec:related-psd-sum}

There is also a body of work that provides specialized bounds
for the minimum eigenvalue of an independent sum of random psd matrices.
Several of these papers are inspired by the same observation~\eqref{eqn:intro-scalar-tail}
that an independent sum of nonnegative real random variables has a Gaussian lower tail,
and they pursue this insight in creative and multifarious ways.
We focus on the earliest and most distinctive contributions.

This literature treats several different random matrix models.
To facilitate comparisons, we summarize the implications
for a special case involving sample covariance matrices.
Consider a random vector $\vct{w} \in \R^d$ that has \hilite{four} finite moments.
For normalization, assume that the vector is \term{isotropic}:
$\Expect[ \vct{ww}^\transp ] = \Id_d$.
Form the sample covariance matrix based on $n$ samples:
\[
\mtx{Y} = \frac{1}{n} \sum_{i=1}^n \vct{w}_i \vct{w}_i^\transp
\quad\text{where $\vct{w}_i \sim \vct{w}$ iid.}
\]
For a parameter $\eps \in (0, 1)$, how many samples $n = n(\eps)$
are sufficient to ensure that
$\lambda_{\min}(\mtx{Y}) \geq 1 - \eps$ with high probability?
The weak assumptions on moments make this question very challenging.

Srivastava \& Vershynin~\cite{SV13:Covariance-Estimation}
formulated this problem and made the first contribution.
They considered a random vector $\vct{w} \in \R^d$ that satisfies the uniform fourth moment condition
\begin{equation} \label{eqn:sv-moments}
\max\nolimits_{\norm{\vct{u}} = 1} \Expect{} \abs{ \ip{ \vct{u} }{ \vct{w} } }^{4} \leq L.
\end{equation}
They establish a sample complexity bound:
\[
n \geq 400 L \eps^{-3} d
\quad\text{implies}\quad
\Expect \lambda_{\min}(\mtx{Y}) \geq 1 - \eps.
\]
Their paper is important because the sample complexity $n$ has the correct (linear)
dependence on the dimension $d$, although it exhibits a suboptimal dependence on $\eps$.
Their proof employs a Stieltjes transform argument inspired
by work in spectral graph theory~\cite{BSS14:Twice-Ramanujan}.

Koltchinskii \& Mendelson~\cite{KM15:Bounding-Smallest} developed a family
of related results under a weak form of the moment condition~\eqref{eqn:sv-moments}.
For some $\eta > 2$, assume that
\[
\Prob{ \abs{\ip{ \vct{u} }{ \vct{w} }} > t } \leq L t^{-(2 + \eta)}
\quad\text{when $\norm{ \vct{u} } = 1$ and $t \geq 1$.}
\]
Their work yields the correct dependence on the parameter $\eps$.  Indeed,
\[
	n \geq C \eps^{-2} d
	\quad\text{implies}\quad
\Prob{ \lambda_{\min}(\mtx{Y}) < 1 - \eps }
	\leq C \log(\econst n / d) \cdot \econst^{- d / C }.
\]
The constant $C = C(\eta, L)$.
Their proof involves truncation, small ball probabilities, and a Vapnik--Chervonenkis dimension argument.

Oliveira~\cite{Oli16:Lower-Tail} proposed a third approach.
Under the assumption~\eqref{eqn:sv-moments}, he established that
\begin{equation} \label{eqn:scov-oliveira}
	n \geq 81 L \eps^{-2}{} (d + 2 \log(2/\delta))
	\quad\text{implies that}\quad
\Prob{ \lambda_{\min}(\mtx{Y}) < 1 - \eps }
	\leq \delta.
\end{equation}
The bound~\eqref{eqn:scov-oliveira} has the correct dependence on all of the parameters.
Oliveira's argument relies on the
PAC-Bayesian method~\cite{LST02:PAC-Bayes,McA03:Simplified-PAC-Bayesian,AC11:Robust-Linear},
a technique that smooths the distribution and employs the variational
properties of the entropy to obtain bounds.
In \cref{sec:scov}, we will exploit the Gaussian comparison method
to give a short proof of Oliveira's bound~\eqref{eqn:scov-oliveira}.

The papers discussed in this section depend heavily on
moment assumptions, such as~\eqref{eqn:sv-moments},
which cannot capture the detailed distribution of a random vector.
To highlight the benefits of the Gaussian comparison method,
we will obtain new bounds for %
the sample covariance of a very sparse random vector (\cref{thm:sparse-cov}).

\section{Gaussian random matrices}
\label{sec:gaussians}

To take advantage of the Gaussian comparison method,
we must be able to construct the comparison model
and determine its spectral properties.
This section outlines some facts about
Gaussian random matrices that will play a role
in the applications and the proofs of the main theorems.
For generality, we work in the complex setting,
which includes the real setting as a special case.

\subsection{The Cartesian decomposition}

To simplify some formulas, let us introduce the Cartesian decomposition of a square matrix.
The \term{real part} and \term{imaginary part} of a square matrix
$\mtx{M} \in \M_d(\C)$ with complex entries are the self-adjoint matrices
\begin{align*}
\Re \mtx{M} \coloneqq \frac{1}{2} (\mtx{M} + \mtx{M}^*) \in \Sym_d(\C)
\quad\text{and}\quad
\Im \mtx{M} \coloneqq \frac{1}{2\iunit} (\mtx{M} - \mtx{M}^*) \in \Sym_d(\C).
\end{align*}
The \term{Cartesian decomposition} states that $\mtx{M} = (\Re \mtx{M}) + \iunit (\Im \mtx{M})$,
and the two terms are orthogonal with respect to the trace inner product.

\subsection{First and second moments of random matrices}

Let $\mtx{X} \in \Sym_d(\C)$ be a random \hilite{self-adjoint} matrix with complex entries.
The second moment function (\cref{def:second-moments})
contains information about the second moment of each entry of the random matrix:
\begin{align*}
\Mo[\mtx{X}]( \Re \mathbf{E}_{ij} )
	= \Expect \abssq{\Re X_{ij}}
	\quad\text{and}\quad
\Mo[\mtx{X}]( \Im \mathbf{E}_{ij} )
	= \Expect \abssq{\Im X_{ij}}.
\end{align*}
As usual, $\mathbf{E}_{ij} \in \M_d(\C)$ is the $(i, j)$ element
of the standard basis for matrices.
We can extract mixed second moments via polarization.  For example,
\[
\Mo[\mtx{X}]\big( (\Re \mathbf{E}_{ij}) + (\Re \mathbf{E}_{k\ell}) \big)
	\ -\ \Mo[\mtx{X}]\big( (\Re \mathbf{E}_{ij}) - (\Re \mathbf{E}_{k\ell}) \big)
	= 4 \Expect\big[ (\Re X_{ij})(\Re X_{k\ell}) \big].
\]
In the last two displays, the range of the indices $i,j,k,\ell = 1, \dots, d$.

The variance function (\cref{def:second-moments}) collects the second moments
of the \hilite{centered} random matrix: %
\begin{equation*} %
\Varo[\mtx{X}] = \Mo[ \mtx{X} - \Expect \mtx{X} ] = \Mo[\mtx{X} ] - \Mo[ \Expect \mtx{X} ].
\end{equation*}
For random self-adjoint matrices $\mtx{X}, \mtx{Y} \in \Sym_d(\C)$,
the variance function is additive in the sense that
\begin{equation} \label{eqn:cov-add}
\Varo[ \mtx{X} + \mtx{Y} ] = \Varo[ \mtx{X} ] + \Varo[\mtx{Y} ]
\quad\text{when $\mtx{X}, \mtx{Y}$ are \hilite{independent}.}
\end{equation}
For contrast, the second moment function $\Mo$ does not satisfy
the additivity rule~\eqref{eqn:cov-add}.

The first and second moments of a random matrix are equivariant under linear transformations.
This fact allows us to transfer the comparison theorems between models.
We remark that there are many types of linear transformations that preserve the cone
of psd matrices~\cite[Sec.~2.1]{Bha07:Positive-Definite}.

\begin{proposition}[Moments: Equivariance] \label{prop:mom-equi}
Consider random self-adjoint matrices $\mtx{W}, \mtx{X} \in \Sym_{d}(\C)$ with
\[
\Expect[ \mtx{X} ] = \Expect[ \mtx{W} ]
\quad\text{and}\quad
\Varo[\mtx{X}] \geq \Mo[\mtx{W}].
\]
Then, for each real-linear map $\coll{L} : \Sym_{d}(\C) \to \Sym_{d'}(\C)$ on self-adjoint matrices,
\[
\Expect[ \coll{L}(\mtx{X}) ] = \Expect[ \coll{L}(\mtx{W}) ]
\quad\text{and}\quad
\Varo[ \coll{L}(\mtx{X}) ] \geq \Mo[ \coll{L}(\mtx{W}) ].
\]
\end{proposition}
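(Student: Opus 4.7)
The plan is to dualize. The expectation identity is essentially free: since $\coll{L}$ is real-linear, it commutes with $\Expect$, so $\Expect[\coll{L}(\mtx{X})] = \coll{L}(\Expect[\mtx{X}]) = \coll{L}(\Expect[\mtx{W}]) = \Expect[\coll{L}(\mtx{W})]$, invoking the hypothesis $\Expect[\mtx{X}] = \Expect[\mtx{W}]$ at the middle equality.

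For the moment inequality, I would view $\Sym_{d}(\C)$ and $\Sym_{d'}(\C)$ as real Hilbert spaces under the trace inner product, which is real-valued on pairs of self-adjoint matrices. Because $\coll{L}$ is real-linear between these finite-dimensional real Hilbert spaces, it admits a unique real-linear adjoint $\coll{L}^{*}: \Sym_{d'}(\C) \to \Sym_{d}(\C)$ determined by the identity $\ip{\mtx{N}}{\coll{L}(\mtx{A})} = \ip{\coll{L}^{*}(\mtx{N})}{\mtx{A}}$ for all $\mtx{A} \in \Sym_{d}(\C)$ and $\mtx{N} \in \Sym_{d'}(\C)$.

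With the adjoint in hand, fix an arbitrary test matrix $\mtx{N} \in \Sym_{d'}(\C)$. Applying the adjoint identity pathwise yields $\ip{\mtx{N}}{\coll{L}(\mtx{W})} = \ip{\coll{L}^{*}(\mtx{N})}{\mtx{W}}$ almost surely, and the same identity holds with $\mtx{W}$ replaced by $\mtx{X}$. Squaring and taking expectations or variances converts \cref{def:second-moments} into the transfer formulas $\Mo[\coll{L}(\mtx{W})](\mtx{N}) = \Mo[\mtx{W}](\coll{L}^{*}(\mtx{N}))$ and $\Varo[\coll{L}(\mtx{X})](\mtx{N}) = \Varo[\mtx{X}](\coll{L}^{*}(\mtx{N}))$. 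Now apply the hypothesis $\Varo[\mtx{X}] \geq \Mo[\mtx{W}]$ to the particular self-adjoint matrix $\coll{L}^{*}(\mtx{N}) \in \Sym_{d}(\C)$; since $\mtx{N}$ was arbitrary, this yields the asserted pointwise inequality of functions on $\Sym_{d'}(\C)$.

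There is no substantive obstacle — the only point requiring any care is that $\coll{L}$ is merely real-linear and not complex-linear, so one must work with the real-linear adjoint defined via the real-valued trace pairing restricted to self-adjoint matrices. The paper's convention of treating $\Sym_{d}(\C)$ as a real vector space throughout accommodates this cleanly, and it is the reason the hypothesis and conclusion of the proposition are both phrased as inequalities of real-valued quadratic forms on self-adjoint test matrices.
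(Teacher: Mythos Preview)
Your proof is correct and follows essentially the same approach as the paper: commute $\coll{L}$ with expectation by linearity, then dualize via the adjoint $\coll{L}^{*}$ to transfer $\Varo$ and $\Mo$ and apply the hypothesis at the point $\coll{L}^{*}(\mtx{N})$. Your additional remark about working with the real-linear adjoint on the real Hilbert space $\Sym_{d}(\C)$ is a helpful clarification but not a departure from the paper's argument.
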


\begin{proof}
By linearity,
\[
\Expect[ \coll{L}(\mtx{X}) ] = \coll{L}( \Expect \mtx{X} )
	= \coll{L}( \Expect \mtx{W} ) = \Expect[ \coll{L}(\mtx{X}) ].
\]
By duality, for each $\mtx{M} \in \Sym_{d}(\C)$,
\begin{align*}
\Varo[ \coll{L}(\mtx{X}) ](\mtx{M}) &= \Var[ \ip{ \mtx{M} }{ \coll{L}(\mtx{X}) } ]
	= \Var[ \ip{ \coll{L}^*(\mtx{M}) }{ \mtx{X} } ] \\
	&= \Varo[ \mtx{X} ](\coll{L}^*(\mtx{M})
	\geq \Mo[ \mtx{W} ](\coll{L}^*(\mtx{M})) \\
	&= \Expect \abssq{ \ip{ \coll{L}^*(\mtx{M}) }{ \mtx{W} } }
	= \Expect \abssq{ \ip{ \mtx{M} }{ \coll{L}(\mtx{W}) } }
	= \Mo[ \coll{L}(\mtx{W}) ]( \mtx{M} ).
\end{align*}
As usual, $\coll{L}^*$ is the adjoint of the linear map with respect to the trace inner product.
\end{proof}

\subsection{Matrix Gaussian series}

\Cref{sec:intro-mom} %
provides an abstract definition of a %
Gaussian random matrix.
This section introduces
a concrete model that is often useful for calculations.
A (self-adjoint) \term{matrix Gaussian series} is a random matrix model of the form
\begin{equation} \label{eqn:mtx-gauss}
\mtx{Z} = \mtx{\Delta} + \sum_{i=1}^n \gamma_i \mtx{H}_i
\quad\text{where $\gamma_i \sim \normal_{\R}(0,1)$ iid.}
\end{equation}
The coefficients $(\mtx{\Delta}, \mtx{H}_1, \dots, \mtx{H}_n)$
are deterministic matrices in $\Sym_d(\C)$.  Let us emphasize
that the Gaussian random variables $\gamma_i$ %
are \hilite{real-valued}.
Every self-adjoint Gaussian random matrix can be written in the form~\eqref{eqn:mtx-gauss}
in many different ways.

We can easily compute the mean and variance function of the
random matrix~\eqref{eqn:mtx-gauss}:
\begin{equation} \label{eqn:mtx-gauss-var}
\Expect \mtx{Z} = \mtx{\Delta}
\quad\text{and}\quad
\Varo[\mtx{Z}](\mtx{M}) = \sum_{i=1}^n \abssq{ \ip{ \mtx{M} }{\mtx{H}_i} }
\quad\text{for $\mtx{M} \in \Sym_d(\C)$.}
\end{equation}
The expression for the variance function follows from the additivity rule~\eqref{eqn:cov-add}.

\subsection{Gaussian monotonicity}

Gaussian random matrices enjoy a strong monotonicity property.
As the variance function increases, expectations of
convex functions of the random matrix also increase.

\begin{proposition}[Gaussians: Monotonicity] \label{prop:gauss-monotone}
Consider two self-adjoint Gaussian matrices whose distributions
$\mtx{Z} \sim \normal(\mtx{\Delta}, \mathsf{V})$ and
$\mtx{Z}' \sim \normal(\mtx{\Delta}, \mathsf{V}')$ take values in $\Sym_d(\C)$
and share the same mean.
Suppose the variance functions
$\mathsf{V}$ and $\mathsf{V}'$ satisfy the pointwise inequality
\begin{equation} \label{eqn:gauss-psd}
\mathsf{V}(\mtx{M}) \leq \mathsf{V}'(\mtx{M})
	\quad\text{for all $\mtx{M} \in \Sym_d(\C)$.}
\end{equation}
For each convex function $f : \Sym_d(\C) \to \R$,
\begin{equation} \label{eqn:gauss-convex}
\Expect f(\mtx{Z}) \leq \Expect f(\mtx{Z}').
\end{equation}
\end{proposition}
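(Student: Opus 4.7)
The plan is to realize $\mtx{Z}'$, in distribution, as the sum of $\mtx{Z}$ and an independent self-adjoint centered Gaussian perturbation, and then apply Jensen's inequality conditionally on $\mtx{Z}$. The enabling facts are the additivity~\eqref{eqn:cov-add} of the variance function under independent sums and the fact that a self-adjoint Gaussian matrix is determined by its mean and variance function.

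First, I would construct a self-adjoint Gaussian $\mtx{G} \sim \normal(\mtx{0}, \mathsf{V}' - \mathsf{V})$ independent of $\mtx{Z}$. To do so, observe that $\mathsf{V}$ and $\mathsf{V}'$ are quadratic forms on the real vector space $\Sym_d(\C)$, and by hypothesis~\eqref{eqn:gauss-psd} their difference $\mathsf{V}'' \coloneqq \mathsf{V}' - \mathsf{V}$ is a nonnegative quadratic form. Diagonalizing the associated psd operator with respect to the trace inner product yields self-adjoint matrices $\mtx{H}_1, \dots, \mtx{H}_m \in \Sym_d(\C)$ for which $\mathsf{V}''(\mtx{M}) = \sum_i \abssq{\ip{\mtx{M}}{\mtx{H}_i}}$. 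The matrix Gaussian series $\mtx{G} = \sum_i \gamma_i \mtx{H}_i$, with iid real standard Gaussians $\gamma_i$ drawn independently of $\mtx{Z}$, then has mean zero and variance function $\mathsf{V}''$ by~\eqref{eqn:mtx-gauss-var}.

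Second, I would verify that $\mtx{Z} + \mtx{G}$ has the same distribution as $\mtx{Z}'$. The sum is self-adjoint Gaussian, with expectation $\mtx{\Delta}$ by linearity, and with variance function $\mathsf{V} + \mathsf{V}'' = \mathsf{V}'$ by the additivity rule~\eqref{eqn:cov-add}. Since a self-adjoint Gaussian is determined by these two statistics, $\mtx{Z} + \mtx{G}$ and $\mtx{Z}'$ are equal in distribution.

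Third, I would apply Jensen's inequality conditional on $\mtx{Z}$. Because $\mtx{G}$ is independent of $\mtx{Z}$ and centered, for each convex $f$,
\[
\Expect\!\left[ f(\mtx{Z} + \mtx{G}) \condbar \mtx{Z} \right]
    \geq f\!\left( \Expect[ \mtx{Z} + \mtx{G} \condbar \mtx{Z}] \right) = f(\mtx{Z}).
\]
Taking total expectation and using the equality in distribution from the previous step gives $\Expect f(\mtx{Z}') = \Expect f(\mtx{Z} + \mtx{G}) \geq \Expect f(\mtx{Z})$, which is~\eqref{eqn:gauss-convex}. The only delicate point in the argument is the first step, namely the realizability of $\mathsf{V}' - \mathsf{V}$ as the variance function of some self-adjoint Gaussian; the matrix Gaussian series construction of~\eqref{eqn:mtx-gauss}, built from real Gaussians over the real vector space $\Sym_d(\C)$, supplies exactly what is needed.
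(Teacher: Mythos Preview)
Your proof is correct and essentially identical to the paper's: both construct an independent centered Gaussian $\mtx{G} \sim \normal(\mtx{0}, \mathsf{V}' - \mathsf{V})$, identify $\mtx{Z} + \mtx{G}$ with $\mtx{Z}'$ in distribution via additivity of the variance function, and then apply Jensen's inequality conditional on $\mtx{Z}$. The only difference is that you spell out the diagonalization step that realizes $\mathsf{V}' - \mathsf{V}$ as a matrix Gaussian series, which the paper leaves implicit.
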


\begin{proof} %
Since $\mathsf{V}' - \mathsf{V}$ is a positive quadratic form on $\Sym_d(\C)$,
we can construct a Gaussian matrix $\mtx{X} \sim \normal(\mtx{0}, \mathsf{V}' - \mathsf{V})$
that is independent from $\mtx{Z}$.
By linearity of expectation, $\Expect[ \mtx{Z} + \mtx{X} ] = \mtx{\Delta}$.
By additivity of the variance function~\eqref{eqn:cov-add},
\[
\Varo[ \mtx{Z} + \mtx{X} ] = \Varo[ \mtx{Z} ] + \Varo[ \mtx{X} ] = \mathsf{V} + (\mathsf{V}' - \mathsf{V}) = \mathsf{V}'.
\]
Therefore, the sum $\mtx{Z} + \mtx{X} \sim \normal(\mtx{\Delta}, \mathsf{V}')$
follows the same distribution as $\mtx{Z}'$.
To obtain the convexity inequality~\eqref{eqn:gauss-convex}, note that
\[
\Expect f(\mtx{Z}') = \Expect f(\mtx{Z} + \mtx{X})
	\geq \Expect f(\mtx{Z}).
\]
We have invoked Jensen's inequality, conditional on $\mtx{Z}$.
\end{proof}

\subsection{Matrix variance}

We can capture information about spectral features
of a Gaussian matrix using scalar summary statistics.
The \term{matrix variance} %
of a self-adjoint Gaussian matrix $\mtx{Z} \in \Sym_d(\C)$
is
\begin{equation} \label{eqn:matrix-var}
\sigma^2(\mtx{Z}) \coloneqq \lnorm{ \Expect{} (\mtx{Z} - \Expect \mtx{Z})^2 }
	= \lnorm{ \sum_{i=1}^n \mtx{H}_i^2 }.
\end{equation}
The second identity in~\eqref{eqn:matrix-var} is valid
for an \hilite{arbitrary} representation~\eqref{eqn:mtx-gauss}
of $\mtx{Z}$ as a Gaussian series.
When $\mtx{Z}, \mtx{Z}'$ are \hilite{independent}, %
we have the subadditivity rule $\sigma^2(\mtx{Z} + \mtx{Z}') \leq \sigma^2(\mtx{Z}) + \sigma^2(\mtx{Z}')$.

The matrix Khinchin inequality~\cite[Thm.~4.6.1]{Tro15:Introduction-Matrix}
states that the matrix variance controls the expectation of the extreme eigenvalues of the
\hilite{centered} Gaussian matrix:

\begin{fact}[Matrix Khinchin] \label{fact:nck}
Consider a \hilite{self-adjoint Gaussian} matrix $\mtx{Z}$ with dimension $d$.
Then
\begin{equation} \label{eqn:mki}
- \Expect \lambda_{\min}(\mtx{Z} - \Expect \mtx{Z})
= \Expect \lambda_{\max}(\mtx{Z} - \Expect \mtx{Z})
\leq \sqrt{2 \sigma^2(\mtx{Z}) \log d}.
\end{equation}
In general, the logarithmic factor in~\eqref{eqn:mki} is required.
The lower bound stated in~\eqref{eqn:nck-intro} follows from a sharp moment
comparison~\cite[Cor.~3]{LO99:Gaussian-Measures} and Jensen's inequality.
\end{fact}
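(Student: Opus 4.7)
The plan is to dispatch the three pieces of the statement in turn. Write $\mtx{Z}_0 \coloneqq \mtx{Z} - \Expect \mtx{Z}$, a centered self-adjoint Gaussian matrix. The identity $\Expect \lambda_{\max}(\mtx{Z}_0) = -\Expect \lambda_{\min}(\mtx{Z}_0)$ is pure symmetry: $\mtx{Z}_0$ and $-\mtx{Z}_0$ share a distribution because negation is a real-linear isometry of $\Sym_d(\C)$ that preserves the covariance structure of a centered Gaussian, and $\lambda_{\min}(-\mtx{A}) = -\lambda_{\max}(\mtx{A})$ for every self-adjoint $\mtx{A}$.

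For the upper bound I would deploy the matrix Laplace transform method. Represent $\mtx{Z}_0 = \sum_i \gamma_i \mtx{H}_i$ as a matrix Gaussian series as in~\eqref{eqn:mtx-gauss}. Chaining the trace exponential bound $\econst^{\theta \lambda_{\max}(\mtx{M})} \leq \trace \econst^{\theta \mtx{M}}$ with Jensen's inequality gives, for each $\theta > 0$,
\[
\Expect \lambda_{\max}(\mtx{Z}_0) \leq \tfrac{1}{\theta} \log \Expect \trace \econst^{\theta \mtx{Z}_0}.
\]
Lieb's concavity theorem (a consequence of Stahl's theorem, already invoked by the paper for~\eqref{eqn:intro-matrix-mgf}) then allows me to push the Gaussian expectation through the trace exponential,
\[
\Expect \trace \econst^{\theta \mtx{Z}_0} \leq \trace \econst^{(\theta^2/2) \sum_i \mtx{H}_i^2} \leq d \cdot \econst^{(\theta^2/2) \sigma^2(\mtx{Z})},
\]
using the identity~\eqref{eqn:matrix-var} for the matrix variance and the crude bound $\trace \econst^{\mtx{M}} \leq d \cdot \econst^{\norm{\mtx{M}}}$ valid on psd $\mtx{M}$. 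Back-substituting produces $\Expect \lambda_{\max}(\mtx{Z}_0) \leq (\log d)/\theta + \theta \sigma^2(\mtx{Z})/2$, and the choice $\theta = \sqrt{2(\log d)/\sigma^2(\mtx{Z})}$ delivers the advertised bound $\sqrt{2 \sigma^2(\mtx{Z}) \log d}$.

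For the lower bound of~\eqref{eqn:nck-intro}, I would combine a Rayleigh-quotient argument with the sharp Gaussian moment comparison. For any fixed unit vector $\vct{u}$, we have $\norm{\mtx{Z}_0 \vct{u}}^2 \leq \norm{\mtx{Z}_0}^2$ almost surely; taking expectations and maximizing the resulting left side over $\vct{u}$ recovers
\[
\sigma^2(\mtx{Z}) = \max\nolimits_{\norm{\vct{u}} = 1} \vct{u}^* \Expect[\mtx{Z}_0^2] \vct{u} \leq \Expect \norm{\mtx{Z}_0}^2.
\]
The Lata{\l}a--Oleszkiewicz $L^2$-to-$L^1$ comparison for Gaussian seminorms~\cite[Cor.~3]{LO99:Gaussian-Measures} then asserts $(\Expect \norm{\mtx{Z}_0}^2)^{1/2} \leq \sqrt{\pi/2} \cdot \Expect \norm{\mtx{Z}_0}$, and chaining the two estimates yields $\Expect \norm{\mtx{Z}_0} \geq \sqrt{(2/\pi) \sigma^2(\mtx{Z})}$. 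The only nontrivial analytic input is Lieb's concavity theorem, already subsumed by Stahl's theorem in the paper's toolkit; the rest is bookkeeping, so the main obstacle is organizational rather than mathematical.
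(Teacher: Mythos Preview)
Your argument is correct and tracks the standard route the paper is citing: the upper bound is the matrix Laplace transform computation from \cite[Thm.~4.6.1]{Tro15:Introduction-Matrix}, and your Rayleigh-quotient step for the lower bound is exactly the Jensen inequality $\sigma^2(\mtx{Z}) = \norm{\Expect \mtx{Z}_0^2} \leq \Expect \norm{\mtx{Z}_0}^2$ the paper alludes to before invoking the Lata{\l}a--Oleszkiewicz comparison.

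One factual correction: Lieb's concavity theorem is \emph{not} a consequence of Stahl's theorem. Lieb's result (1973) concerns the concavity of $\mtx{A} \mapsto \trace \exp(\mtx{H} + \log \mtx{A})$ on positive-definite matrices, whereas Stahl's theorem (2013) concerns complete monotonicity of $w \mapsto \trace \exp(\mtx{B} - w\mtx{A})$; neither implies the other, and the paper invokes Stahl's theorem for an entirely different purpose (the trace mgf comparison in \cref{prop:psd-weight-mgf}). The trace mgf bound you need here,
\[
\Expect \trace \econst^{\theta \mtx{Z}_0} \leq \trace \exp\Big(\tfrac{\theta^2}{2} \sum_i \mtx{H}_i^2\Big),
\]
is obtained in \cite{Tro15:Introduction-Matrix} via Lieb's theorem through the subadditivity of matrix cumulant generating functions. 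Cite it as such; the rest of your writeup is fine.
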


\subsection{Weak variance}

For a self-adjoint Gaussian matrix $\mtx{Z} \in \Sym_d(\C)$,
the \term{weak variance} statistic is defined as
\begin{equation} \label{eqn:weak-var}
\sigma_*^2(\mtx{Z}) \coloneqq \max\nolimits_{\norm{\vct{u}} = 1} \Var[ \vct{u}^* \mtx{Z} \vct{u} ]
	= \max\nolimits_{\norm{\vct{u}} = 1} \sum_{i=1}^n (\vct{u}^* \mtx{H}_i \vct{u})^2.
\end{equation}
The second identity holds for an \hilite{arbitrary} representation~\eqref{eqn:mtx-gauss}
of $\mtx{Z}$ as a Gaussian series.
As stated in~\eqref{eqn:intro-var-wvar},
the weak variance is controlled by the matrix variance,
and both bounds are attainable.

The weak variance can easily be expressed in terms of the variance function:
\[
\sigma_*^2(\mtx{Z}) = \max\nolimits_{\norm{\vct{u}}=1} \Varo[ \mtx{Z} ](\vct{uu}^*)
	= \max\nolimits_{\norm{\mtx{M}}_1 = 1} \Varo[ \mtx{Z} ](\mtx{M}).
\]
We have written $\norm{\cdot}_1$ for the Schatten $1$-norm.
As a consequence, for self-adjoint Gaussian matrices $\mtx{Z}, \mtx{Z}' \in \Sym_d(\C)$,
the weak variance is monotone with respect to the variance function:
\begin{equation} \label{eqn:wvar-monotone}
\Varo[\mtx{Z}] \leq \Varo[\mtx{Z}']
\quad\text{implies}\quad
\sigma_*^2(\mtx{Z}) \leq \sigma_*^2(\mtx{Z}').
\end{equation}
When $\mtx{Z}, \mtx{Z}'$ are \hilite{independent}, we also have the subadditivity rule
$\sigma_*^2(\mtx{Z} + \mtx{Z}') \leq \sigma_*^2(\mtx{Z}) + \sigma_*^2(\mtx{Z}')$.

The weak variance arises when studying concentration properties
of Gaussian random matrices.

\begin{fact}[Gaussian concentration] \label{fact:gauss-lip}
Let $h : \Sym_d(\C) \to \R$ be a function that is $1$-Lipschitz with respect to the
$\ell_2$ operator norm:
\[
\abs{ h(\mtx{A}) - h(\mtx{B}) } \leq \norm{ \mtx{A} - \mtx{B} }
\quad\text{for all $\mtx{A}, \mtx{B} \in \Sym_d(\C)$.}
\]
For a self-adjoint Gaussian matrix $\mtx{Z} \in \Sym_d(\C)$,
the mgf %
of $h(\mtx{Z})$ satisfies
\begin{equation} \label{eqn:gauss-lip-mgf}
\Expect \econst^{ \theta (h(\vct{Z}) - \Expect h(\mtx{Z}))}
	\leq \econst^{\theta^2 \sigma_*^2(\mtx{Z}) / 2}
	\quad\text{for all $\theta \in \R$.}
\end{equation}
We can instantiate~\eqref{eqn:gauss-lip-mgf} with $h = \lambda_{\max}$ or $h = \lambda_{\min}$
because of Weyl's inequality~\cite[Cor.~III.2.6]{Bha97:Matrix-Analysis}.
\end{fact}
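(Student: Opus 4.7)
The plan is to reduce this matrix Gaussian concentration statement to the classical scalar Gaussian concentration inequality for Lipschitz functions of a standard Gaussian vector, by representing $\mtx{Z}$ as a matrix Gaussian series and transferring the geometry accordingly.

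First I would write $\mtx{Z} = \mtx{\Delta} + \sum_{i=1}^{n} \gamma_i \mtx{H}_i$ in the form~\eqref{eqn:mtx-gauss}, where $\vct{\gamma} = (\gamma_1, \dots, \gamma_n) \sim \normal_{\R}(\vct{0}, \Id_n)$ and the $\mtx{H}_i \in \Sym_d(\C)$ are deterministic. Define the scalar function $F : \R^n \to \R$ by
\[
F(\vct{\alpha}) \coloneqq h\bigl(\mtx{\Delta} + \sum\nolimits_{i=1}^n \alpha_i \mtx{H}_i\bigr).
\]
Then $h(\mtx{Z}) = F(\vct{\gamma})$, and the goal reduces to a subgaussian mgf bound on $F(\vct{\gamma}) - \Expect F(\vct{\gamma})$.

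Second, I would compute the Lipschitz constant of $F$ on $\R^n$. Using the $1$-Lipschitz hypothesis on $h$,
\[
\abs{F(\vct{\alpha}) - F(\vct{\beta})} \leq \lnorm{\sum\nolimits_{i=1}^n (\alpha_i - \beta_i) \mtx{H}_i}.
\]
Since each $\mtx{H}_i$ is self-adjoint, the operator norm can be realized by a unit vector $\vct{u}$, giving $\norm{\sum_i (\alpha_i - \beta_i) \mtx{H}_i} = \sup_{\norm{\vct{u}}=1}\labs{\sum_i(\alpha_i-\beta_i)(\vct{u}^*\mtx{H}_i\vct{u})}$. Swapping suprema and applying Cauchy--Schwarz in the coordinate $\alpha_i - \beta_i$ yields
\[
\sup_{\vct{\alpha} \neq \vct{\beta}} \frac{\abs{F(\vct{\alpha}) - F(\vct{\beta})}^2}{\norm{\vct{\alpha}-\vct{\beta}}^2}
	\leq \sup_{\norm{\vct{u}}=1} \sum_{i=1}^n (\vct{u}^* \mtx{H}_i \vct{u})^2 = \sigma_*^2(\mtx{Z}),
\]
where the last equality is exactly the series expression for the weak variance from~\eqref{eqn:weak-var}. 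So $F$ is $\sigma_*(\mtx{Z})$-Lipschitz on $\R^n$.

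Third, I would invoke the standard Gaussian concentration mgf bound for Lipschitz functions of a standard Gaussian (a classical consequence of Gaussian isoperimetry or, equivalently, of the Herbst argument applied to the Gaussian log-Sobolev inequality; see, e.g.,~\cite[Thm.~5.5]{BLM13:Concentration-Inequalities}). That theorem gives, for any $L$-Lipschitz $F$,
\[
\Expect \econst^{\theta(F(\vct{\gamma}) - \Expect F(\vct{\gamma}))} \leq \econst^{\theta^2 L^2/2}
	\quad \text{for all } \theta \in \R,
\]
and substituting $L = \sigma_*(\mtx{Z})$ yields~\eqref{eqn:gauss-lip-mgf}. For the final remark, $\lambda_{\max}$ and $\lambda_{\min}$ are both $1$-Lipschitz with respect to the operator norm by Weyl's inequality, so the bound applies directly to these functionals.

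The only real subtlety I expect is the verification that the correct Lipschitz constant is $\sigma_*(\mtx{Z})$ rather than the (typically larger) matrix variance $\sigma(\mtx{Z})$; this hinges on the self-adjointness of the $\mtx{H}_i$, which collapses the operator norm to a quadratic form over unit vectors and lets us swap the two suprema so that Cauchy--Schwarz acts only on scalar coefficients. One should also note that the bound produced is independent of the particular Gaussian-series representation chosen for $\mtx{Z}$, as already observed for the weak variance in~\eqref{eqn:weak-var}, so the result is an invariant of the distribution $\normal(\mtx{\Delta}, \Varo[\mtx{Z}])$.
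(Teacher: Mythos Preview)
Your proposal is correct and follows essentially the same route as the paper: represent $\mtx{Z}$ as a Gaussian series, show via self-adjointness and Cauchy--Schwarz that the induced scalar function has Lipschitz constant $\sigma_*(\mtx{Z})$, and then invoke the standard Gaussian Lipschitz concentration bound \cite[Thm.~5.5]{BLM13:Concentration-Inequalities}. Your added remarks about the role of self-adjointness and the representation-independence of $\sigma_*(\mtx{Z})$ are accurate and helpful.
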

\begin{proof}[Proof sketch]
To confirm~\eqref{eqn:gauss-lip-mgf}, define the composite function
\[
f : (\gamma_1, \dots, \gamma_n) \mapsto h\left( \mtx{\Delta} + \sum_{i=1}^n \gamma_i \mtx{H}_i \right) \eqqcolon h(\mtx{Z}).
\]
We quickly calculate the Lipschitz constant of $f$ with respect to the $\ell_2$ norm:
\begin{align*}
\abs{ f(a_1, \dots, a_n) - f(b_1, \dots, b_n) }
	&\leq \lnorm{ \sum_{i=1}^n (a_i - b_i) \mtx{H}_i } \\
	&= \max\nolimits_{\norm{\vct{u}}=1} \sum_{i=1}^n (a_i - b_i) (\vct{u}^* \mtx{H}_i \vct{u})
	\leq \sigma_*(\mtx{Z}) \cdot \norm{ \vct{a} - \vct{b} }.
\end{align*}
The last inequality is Cauchy--Schwarz.  In other words,
the statistic $\sigma_*(\mtx{Z})$ is the Lipschitz constant of $h(\mtx{Z})$.
Gaussian Lipschitz concentration~\cite[Thm.~5.5]{BLM13:Concentration-Inequalities}
yields the mgf bound~\eqref{eqn:gauss-lip-mgf}.
\end{proof}

\subsection{Basic examples}
\label{sec:gauss-examples}

Let us collect some simple Gaussian matrices that
we may combine to build comparison models.
Indeed, 
we can represent a Gaussian matrix as a sum of independent Gaussian
matrices by breaking the variance function
into pieces and identifying a
Gaussian model for each piece.
This strategy is justified by the 
additivity~\eqref{eqn:cov-add} of
the variance.
In this section,
the random variables $\gamma, \gamma_i, \gamma_{jk}, \smash{\gamma_{jk}'}$
are iid $\normal_{\R}(0,1)$.
For $v \geq 0$, the distribution $\normal_{\C}(0,v)$ generates a complex Gaussian random
variable whose real and imaginary parts are iid $\normal_{\R}(0,v/2)$
random variables.

\subsubsection{Scalar Gaussian matrix}

As a warmup, consider the scalar matrix
$\mtx{S} \coloneqq \gamma \Id \in \Sym_d(\C)$.
It is easy to see that $\Expect \lambda_{\min}(\mtx{S}) = \Expect \lambda_{\max}(\mtx{S}) = 0$.
The variance function of the scalar matrix acts as
\[
\Varo[\mtx{S}](\mtx{M}) = (\trace \mtx{M})^2
\quad\text{for $\mtx{M} \in \Sym_d(\C)$.}
\]
Last, note that the matrix variance and weak variance coincide:
$\sigma^2(\mtx{S}) = \sigma^2_{*}(\mtx{S}) = 1$.

\subsubsection{Diagonal Gaussian matrix}

Next, consider the diagonal matrix:
\[
\mtx{D} \coloneqq \sum_{i=1}^d \gamma_i \mathbf{E}_{ii} \in \Sym_d(\C). %
\]
The extreme eigenvalues satisfy
\[
- \Expect \lambda_{\min}(\mtx{D}) = \Expect \lambda_{\max}(\mtx{D}) = \Expect \max\{ \gamma_i : i =1, \dots, d\}
	\leq \sqrt{2 \log d}.
\]
This expectation bound is asymptotically sharp.
The variance function of the diagonal matrix is
\[
\Varo[\mtx{D}](\mtx{M}) = \sum_{i=1}^d \abssq{m_{ii}}
\quad\text{for $\mtx{M} \in \Sym_d(\C)$.}
\]
The matrix variance and weak variance again coincide:
$\sigma^2(\mtx{D}) = \sigma_*^2(\mtx{D}) = 1$.

\subsubsection{Gaussian orthogonal ensemble}
\label{sec:goe}

Now, we turn to a more sophisticated example.
A random matrix from the (unnormalized) \term{Gaussian orthogonal ensemble (GOE)}
takes the form
\[
\mtx{G}_{\goe} \coloneqq \sqrt{2} \sum_{j,k = 1}^d \gamma_{jk} (\Re \mathbf{E}_{jk})
	\in \Sym_d(\R).
\]
Note that this matrix is real and symmetric.
The diagonal entries are iid $\normal_{\R}(0,2)$ random variables,
while the entries in the upper triangle are iid $\normal_{\R}(0,1)$ random variables.
The key property of the GOE distribution is orthogonal invariance:
\begin{equation} \label{eqn:goe-invar}
\mtx{Q}^\transp \mtx{G}_{\goe} \mtx{Q} \sim \mtx{G}_{\goe}
\quad\text{for each orthogonal $\mtx{Q} \in \M_d(\R)$.}
\end{equation}
The extreme eigenvalues of the GOE matrix satisfy an elegant bound~\cite[Exer.~6.48]{AS17:Alice-Bob}:
\begin{equation} \label{eqn:gue-eig}
- \Expect \lambda_{\min}( \mtx{G}_{\goe} ) = \Expect \lambda_{\max}( \mtx{G}_{\goe} ) \leq 2 \sqrt{d}.
\end{equation}
To recognize the GOE matrix, observe that its variance function is
\[
\Varo[\mtx{G}_{\goe}](\mtx{M}) = 2 \sum_{j,k = 1}^d \abssq{m_{jk}} = 2 \fnormsq{\mtx{M}}
\quad\text{for $\mtx{M} \in \Sym_d(\R)$.}
\]
The matrix variance and weak variance differ almost
as much as possible.  Indeed,
$\sigma^2(\mtx{G}_{\goe}) = d + 1$ and
$\sigma_*^2(\mtx{G}_{\goe}) = 2$.
GOE matrices arise in several of our comparisons. (For example, see \cref{sec:wishart}.)

\subsubsection{Gaussian unitary ensemble}

The \term{Gaussian unitary ensemble (GUE)} is the complex cousin of the GOE.
A random matrix from this family takes the form
\[
\mtx{G}_{\gue} \coloneqq \sum_{j,k = 1}^d \big[ \gamma_{jk} (\Re \mathbf{E}_{jk})
	+ \gamma_{jk}' (\Im \mathbf{E}_{jk}) \big]
	\in \Sym_d(\C).
\]
The diagonal entries are iid real $\normal_{\R}(0,1)$ random variables,
while the entries in the upper triangle are iid complex $\normal_{\C}(0,1)$ random variables.
The GUE distribution is unitarily invariant:
\[
\mtx{U}^* \mtx{G}_{\gue} \mtx{U} \sim \mtx{G}_{\gue}
\quad\text{for each unitary $\mtx{U} \in \M_d(\C)$.}
\]
The extreme eigenvalues of the GUE matrix satisfy the following bound~\cite[Exer.~6.38]{AS17:Alice-Bob}.
\[
- \Expect \lambda_{\min}( \mtx{G}_{\gue} ) = \Expect \lambda_{\max}( \mtx{G}_{\gue} ) \leq 2 \sqrt{d}.
\]
The variance function of the GUE matrix acts as
\[
\Varo[\mtx{G}_{\gue}](\mtx{M}) = \sum_{j,k=1}^d \abssq{m_{jk}}
	= \fnormsq{\mtx{M}}
\quad\text{for $\mtx{M} \in \Sym_d(\C)$.}
\]
The matrix variance and weak variance differ as much as possible:
$\sigma^2(\mtx{G}_{\gue}) = d$ and
$\sigma_*^2(\mtx{G}_{\gue}) = 1$.
Among all random matrices, the GUE matrix is perhaps the most fundamental.

\section{Application: Sampling from a design}
\label{sec:designs}

We begin with a geometric example.  If we randomly subsample a set of vectors
that spans a (complex) linear space, does the reduced set still span the space?
In considering this question, we must enforce some regularity properties to
avoid situations where most of the vectors fall in a proper
subspace.  For illustration, we impose a rather strong condition, which ensures
that the vectors are distributed evenly across the unit sphere.

\subsection{Projective designs}

Consider a finite system $\coll{U} \coloneqq (\vct{u}_1, \dots, \vct{u}_n)$ of unit-norm vectors in $\C^d$.
The system is called a \term{complex projective $t$-design} when it yields a quadrature
rule for homogeneous degree-$2t$ polynomials on the complex unit sphere $\mathbb{S}^{d-1}(\C)$.
More precisely, for each vector $\vct{a} \in \C^d$,
\[
\frac{1}{n} \sum_{i=1}^n \abs{ \ip{\vct{a}}{\vct{u}_i} }^{2t} = \Expect{}\abs{ \ip{\vct{a}}{\vct{v}} }^{2t}
\quad\text{where $\vct{v} \sim \uniform(\mathbb{S}^{d-1})$.} %
\]
In particular, a \term{projective 1-design} is the same as an \term{isotropic system}
(aka a \term{unit-norm tight frame}), so it spans the whole space:
\begin{equation} \label{eqn:untf}
\frac{1}{n} \sum_{i=1}^n \vct{u}_i \vct{u}_i^* = \frac{1}{d} \cdot \Id_d.
\end{equation}
We are interested in a \term{projective 2-design}, which is characterized by the condition
\begin{equation} \label{eqn:4design}
\frac{1}{n} \sum_{i=1}^n \abssq{ \ip{\mtx{M}}{\vct{u}_i \vct{u}_i^*} }
	= \frac{1}{d(d+1)} \big[ \fnormsq{\mtx{M}} + (\trace \mtx{M})^2 \big]
	\quad\text{for all $\mtx{M} \in \Sym_d(\C)$.}
\end{equation}
A projective $2$-design is always a projective $1$-design.
Examples of projective 2-designs include systems of equiangular vectors,
mutually unbiased bases, and other highly symmetric configurations.
See~\cite[Chap.~8]{Wal18:Introduction-Finite} or~\cite[App.~B.1]{GKKT20:Fast-State-Tomography}
for more examples and discussion.

\subsection{Sampling from a projective design}

A classic question in nonasymptotic random matrix theory
asks when a random subset of an isotropic system %
remains a spanning set.  The fundamental result
for this problem is due to Rudelson~\cite{Rud99:Random-Vectors};
see \cref{sec:rudelson} for a proof sketch.

\begin{fact}[Sampling: Projective 1-design] \label{fact:sampling-1-design}
Consider a complex projective \hilite{1-design} $\coll{U} \coloneqq (\vct{u}_1, \dots, \vct{u}_n)$
consisting of $n$ unit-norm vectors in $\smash{\C^d}$.
For a parameter $1 \leq s \leq n$, construct a random subsystem $\coll{U}'$
with an average of $s$ vectors by uniform sampling:
\begin{equation} \label{eqn:subsample-vecs}
\coll{U}' = (\vct{u}_i : \xi_i = 1)
\quad\text{where $\xi_i \sim \bernoulli(s/n)$ iid.}
\end{equation}
When $s \geq d \log(d/\delta)$,
the random system $\coll{U}'$ spans $\C^d$ with probability at least $1 - \delta$.
The logarithmic factor in the sampling complexity is necessary.
\end{fact}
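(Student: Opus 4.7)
The plan is to apply \cref{thm:sampling-intro} with the rank-one psd matrices $\mtx{A}_i = \vct{u}_i \vct{u}_i^*$ and Bernoulli weights $W_i = \xi_i \sim \bernoulli(s/n)$, so that $\mtx{Y} = \sum_i W_i \vct{u}_i \vct{u}_i^*$ is the frame operator of the subsystem $\coll{U}'$ from~\eqref{eqn:subsample-vecs}; the subsystem $\coll{U}'$ spans $\C^d$ if and only if $\lambda_{\min}(\mtx{Y}) > 0$. Since $\Expect[W_i] = \Expect[W_i^2] = s/n$, the Gaussian comparison model is $\mtx{Z} = \sum_i X_i \vct{u}_i \vct{u}_i^*$ with independent $X_i \sim \normal_{\R}(s/n,\ s/n)$. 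Writing $X_i = s/n + \sqrt{s/n}\, \gamma_i$ and invoking the 1-design identity~\eqref{eqn:untf}, I would decompose $\mtx{Z}$ into a deterministic part plus a centered Gaussian series:
\[
\mtx{Z} \ =\ \frac{s}{d}\, \Id_d \ +\ \sqrt{\frac{s}{n}} \, \mtx{G}
\qquad\text{where}\qquad
\mtx{G} \coloneqq \sum_{i=1}^n \gamma_i\, \vct{u}_i \vct{u}_i^*.
\]

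Next I would estimate the two summary statistics of $\mtx{Z}$ that drive the comparison. Because each $\vct{u}_i$ is a unit vector, $(\vct{u}_i \vct{u}_i^*)^2 = \vct{u}_i \vct{u}_i^*$, so the 1-design identity gives matrix variance $\sigma^2(\mtx{G}) = \norm{\sum_i \vct{u}_i \vct{u}_i^*} = n/d$. Matrix Khinchin (\cref{fact:nck}) combined with the sign-symmetry of $\mtx{G}$ produces $-\Expect \lambda_{\min}(\mtx{G}) \leq \sqrt{2(n/d) \log d}$, which propagates to
\[
\Expect \lambda_{\min}(\mtx{Z}) \ \geq\ \frac{s}{d} - \sqrt{\frac{2s \log d}{d}}.
\]
For the weak variance, I would use the unit-norm hypothesis to peel off one factor from the fourth-power sum:
\[
\sigma_*^2(\mtx{Z}) \ =\ \frac{s}{n} \max_{\norm{\vct{u}}=1} \sum_{i=1}^n \abs{\ip{\vct{u}}{\vct{u}_i}}^4 \ \leq\ \frac{s}{n} \cdot 1 \cdot \frac{n}{d} \ =\ \frac{s}{d},
\]
using $\abs{\ip{\vct{u}}{\vct{u}_i}} \leq \norm{\vct{u}_i} = 1$ and the 1-design identity once more.

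Finally, I would substitute these estimates into the tail bound of \cref{thm:sampling-intro} with $t = \sqrt{(2s/d) \log(d/\delta)}$, which forces the failure probability to be at most $\delta$. The condition for $\lambda_{\min}(\mtx{Y}) > 0$ with probability at least $1 - \delta$ then reduces to
\[
\frac{s}{d} \ >\ \sqrt{\frac{2s \log d}{d}} + \sqrt{\frac{2s \log(d/\delta)}{d}},
\]
and this simplifies to $s \geq C d \log(d/\delta)$ for a universal constant $C$. The main subtlety is in the weak variance estimate: the cancellation that erases the dependence on $n$ relies on matching the Bernoulli second moment $s/n$ against the 1-design identity $\sum_i \abs{\ip{\vct{u}}{\vct{u}_i}}^2 = n/d$, so the aggregate $n$ evaporates and leaves only $s/d$. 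Without the unit-norm condition, the bound would degrade in proportion to the worst coherence $\max_i \norm{\vct{u}_i}^2$, emphasizing why the projective design hypothesis is exactly what the Gaussian comparison method needs.
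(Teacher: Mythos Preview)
Your argument via \cref{thm:sampling-intro} is correct and delivers the conclusion with $s \geq C\, d \log(d/\delta)$ for an absolute constant (roughly $C=8$), but it is not the route the paper takes. The paper treats this Fact as classical background (due to Rudelson) and proves it directly with the matrix Chernoff inequality: since $\Expect \mtx{Y} = (s/d)\,\Id_d$ and each summand $\xi_i\, \vct{u}_i \vct{u}_i^*$ has norm at most one, matrix Chernoff gives $\Prob{\lambda_{\min}(\mtx{Y}) = 0} \leq d \cdot \econst^{-s/d}$, which recovers the stated threshold $s \geq d\log(d/\delta)$ with the exact constant $1$. The purpose of the Fact in context is to contrast the unavoidable $\log d$ overhead for a general $1$-design with the constant-factor oversampling that Gaussian comparison achieves for a $2$-design (\cref{thm:sampling-2-design}); applying Gaussian comparison to the $1$-design merely reproduces the Chernoff outcome with a worse constant, because your only handle on $\Expect\lambda_{\min}(\mtx{Z})$ is the matrix Khinchin bound (\cref{fact:nck}), which itself carries the $\log d$. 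You also omitted the necessity half of the statement: the paper exhibits the $1$-design obtained by repeating the standard basis of $\R^d$ many times and invokes the coupon collector phenomenon to show that $s \gtrsim d\log d$ samples are required.
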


As a complement to \cref{fact:sampling-1-design}, %
we study the problem of
sampling from a complex projective \hilite{2-design}.
We will %
establish that a random subset
remains a spanning set when the average number of vectors
exceeds the ambient dimension by a \hilite{constant} factor.
The proof appears in~\cref{sec:sampling-2-design}.

\begin{theorem}[Sampling: Projective 2-design] \label{thm:sampling-2-design}
With the notation of \cref{fact:sampling-1-design}, assume that
the system $\coll{U}$ is a complex projective \hilite{2-design}.
When the average number $s$ of sampled vectors satisfies
\[
s \geq 4 \big[ \sqrt{d} + \sqrt{\log(d/\delta)} \big]^2, %
\]
the random subset $\coll{U}'$ spans $\C^d$ with probability at least $1 - \delta$.
\end{theorem}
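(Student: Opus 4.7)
The plan is to apply \cref{thm:sampling-intro} with Bernoulli weights $W_i = \xi_i \sim \bernoulli(s/n)$ and rank-one coefficient matrices $\mtx{A}_i = \vct{u}_i \vct{u}_i^*$. The random subsystem $\coll{U}'$ spans $\C^d$ precisely when $\lambda_{\min}(\mtx{Y}) > 0$ for $\mtx{Y} = \sum_i \xi_i \vct{u}_i \vct{u}_i^*$, so it suffices to bound the lower tail of $\lambda_{\min}(\mtx{Y})$.

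Since $\Expect \xi_i = \Expect \xi_i^2 = s/n$, the Gaussian comparison model is $\mtx{Z} = \sum_i X_i \vct{u}_i \vct{u}_i^*$ with $X_i \sim \normal_{\R}(s/n,\, s/n)$ iid, which I would split as
\[
\mtx{Z} = \tfrac{s}{d}\, \Id_d + \sqrt{s/n} \cdot \mtx{G}, \qquad \mtx{G} \coloneqq \sum_i \gamma_i \vct{u}_i \vct{u}_i^*, \qquad \gamma_i \sim \normal_{\R}(0,1)\ \text{iid.}
\]
The mean computation uses the 1-design identity~\eqref{eqn:untf}. Plugging $\mtx{M} = \vct{u}\vct{u}^*$ into the 2-design identity~\eqref{eqn:4design} yields the weak variance $\sigma_*^2(\mtx{Z}) = 2s/(d(d+1))$, while applying~\eqref{eqn:4design} to a general self-adjoint $\mtx{M}$ gives
\[
\Varo[\mtx{G}](\mtx{M}) = \frac{n}{d(d+1)}\bigl[ \fnormsq{\mtx{M}} + (\trace \mtx{M})^2 \bigr].
\]
This is precisely the variance function of $\sqrt{n/(d(d+1))} \cdot (\mtx{G}_{\gue} + \gamma \Id_d)$, where $\mtx{G}_{\gue}$ and $\gamma \sim \normal_{\R}(0,1)$ are independent; since a self-adjoint Gaussian is determined by its mean and variance function, $\mtx{G}$ has this law.

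With this identification, the dimension-free semicircle estimate $\Expect \lambda_{\max}(\mtx{G}_{\gue}) \leq 2\sqrt{d}$ from \cref{sec:gauss-examples}, combined with the symmetry $\mtx{G}_{\gue} \sim -\mtx{G}_{\gue}$, yields $\Expect \lambda_{\min}(\mtx{Z}) \geq s/d - 2\sqrt{s/(d+1)}$. Setting $t = \Expect \lambda_{\min}(\mtx{Z})$ in the tail inequality of \cref{thm:sampling-intro} produces
\[
\Prob{\lambda_{\min}(\mtx{Y}) \leq 0} \leq d \cdot \exp\!\bigl( -t^2 / (2\sigma_*^2(\mtx{Z})) \bigr).
\]
A routine algebraic manipulation reduces the demand that this bound be at most $\delta$ to $\sqrt{s} \geq 2 d/\sqrt{d+1} + 2\sqrt{d \log(d/\delta)/(d+1)}$, which is implied by the stated hypothesis $s \geq 4 \bigl[\sqrt{d} + \sqrt{\log(d/\delta)}\bigr]^2$.

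The step requiring the most care is the identification of $\mtx{G}$ as a scaled sum of a GUE matrix and an independent scalar Gaussian. This matching of distributions is what lets us invoke the sharp bound $\Expect \lambda_{\max}(\mtx{G}_{\gue}) \leq 2\sqrt{d}$; routing instead through the matrix Khinchin inequality (\cref{fact:nck}) would introduce a $\sqrt{\log d}$ factor into the scale of $\lambda_{\min}(\mtx{G})$ and would thereby spoil the optimal constant $4$ in the sample complexity.
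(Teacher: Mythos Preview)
Your proof is correct and follows essentially the same route as the paper: apply \cref{thm:sampling-intro} with Bernoulli weights, use the 2-design identity~\eqref{eqn:4design} to recognize the centered Gaussian comparison model as a scaled $\mtx{G}_{\gue} + \gamma \Id_d$, invoke the GUE eigenvalue bound~\eqref{eqn:gue-eig}, and solve the resulting tail inequality. The only cosmetic difference is that the paper reparametrizes $s = \beta d$ and rounds $d+1$ to $d$ slightly earlier than you do; your final algebraic reduction is the same.
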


To analyze the subsampled system $\coll{U}'$, defined in \eqref{eqn:subsample-vecs},
we introduce the random psd matrix
\begin{equation} \label{eqn:Y-subsample}
\mtx{Y} \coloneqq \sum_{\vct{u} \in \coll{U}'} \vct{uu}^*
	= \sum_{i=1}^n \xi_i \vct{u}_i \vct{u}_i^*
	\quad\text{where $\xi_i \sim \bernoulli(s/n)$ iid.}
\end{equation}
Observe that the system $\coll{U}'$ spans $\C^d$ if and only if $\lambda_{\min}(\mtx{Y}) > 0$.
We can obtain the minimum eigenvalue bound from a quick application of the Gaussian comparison method (\cref{thm:sampling-intro}).
In contrast,
the existing techniques for minimum eigenvalues~\cite{SV13:Covariance-Estimation,KM15:Bounding-Smallest,Oli16:Lower-Tail} %
all fail to provide the correct bound for $\lambda_{\min}(\mtx{Y})$
because the summands in~\eqref{eqn:Y-subsample} are both spiky and non-identically distributed.
Spectral universality tools, such as~\cite[Cor.~2.7]{BvH24:Universality-Sharp},
also produce the wrong answer when applied to the random matrix $\mtx{Y}$.

\subsubsection{Proof of \cref{fact:sampling-1-design}}
\label{sec:rudelson}

Assume that $\coll{U}$ is a complex projective \hilite{1-design}.
For the proof, we express the average number of sampled vectors $s = \beta d$
in terms of a parameter in the range $1 \leq \beta \leq n/d$.

Consider the random matrix $\mtx{Y}$, defined in~\eqref{eqn:Y-subsample}.
By the isotropy property~\eqref{eqn:untf},
\begin{equation*} \label{eqn:EY-subsample}
\Expect \mtx{Y} = \frac{\beta d}{n} \sum_{i=1}^n \vct{u}_i \vct{u}_i^* = \beta \cdot \Id_d.
\end{equation*}
The $\vct{u}_i$ are unit vectors, so the spectral norm of each summand in~\eqref{eqn:Y-subsample}
is bounded by one.
An application of the matrix Chernoff inequality~\cite[Thm.~5.1.1]{Tro15:Introduction-Matrix}
yields
\[
\Prob{ \lambda_{\min}(\mtx{Y}) = 0 }
	\leq d \cdot \econst^{-\beta}.
\]
If the oversampling parameter $\beta \geq \log(d/\delta)$, then the probability that $\mtx{Y}$
is singular is at most $\delta$.  %

Last, to confirm that the sampling complexity bound is unimprovable without further assumptions,
consider the 1-design obtained by repeating the standard basis for $\R^d$: %
\[
\coll{U} = (\underbrace{\mathbf{e}_1, \dots, \mathbf{e}_1}_R, \underbrace{\mathbf{e}_2, \dots, \mathbf{e}_2}_R, \dots, \underbrace{\mathbf{e}_d, \dots, \mathbf{e}_d}_R).
\]
The parameter $R$ is a large natural number.
To reliably select each distinct basis vector at least once, it takes $s \geq d \log d$ samples
because of the coupon collector phenomenon~\cite[Sec.~2.4.1]{MU17:Probability-Computing-2ed}.

\subsubsection{Proof of \cref{thm:sampling-2-design}}
\label{sec:sampling-2-design}

Assume that $\coll{U}$ is a complex projective \hilite{2-design},
and parameterize the average number of samples as $s = \beta d$.

Consider the random matrix $\mtx{Y}$ defined in~\eqref{eqn:Y-subsample}.
\Cref{thm:sampling-intro} immediately yields a comparison with the Gaussian matrix
\begin{equation*}
\begin{aligned}
\mtx{Z} &= \sum_{i=1}^n X_i \vct{u}_i \vct{u}_i^*
&&\quad\text{where $X_i \sim \normal_{\R}(\beta d / n, \ \beta d / n)$ iid;} \\
	&\sim \beta \cdot \Id_d + \sqrt{\frac{\beta d}{n}} \sum_{i=1}^n \gamma_i \vct{u}_i \vct{u}_i^*
	&&\quad\text{where $\gamma_i \sim \normal_{\R}(0,1)$ iid.}
\end{aligned}
\end{equation*}
Indeed, $\Expect \mtx{Z} = \beta \cdot \Id_d$
because a 2-design satisfies the isotropy property~\eqref{eqn:untf}.
The 2-design property~\eqref{eqn:4design} also allows us to compute the variance function.
For self-adjoint $\mtx{M} \in \Sym_d(\C)$,
\begin{align*}
\Varo[\mtx{Z}](\mtx{M}) = \frac{\beta d}{n} \sum_{i=1}^n \abssq{ \ip{ \mtx{M} }{ \vct{u}_i \vct{u}_i^* } } %
	= \frac{\beta}{d+1} \left[ \fnormsq{\mtx{M}} + (\trace \mtx{M})^2 \right].
\end{align*}
The variance function determines the distribution of the random part of $\mtx{Z}$.
In view of the discussion in \cref{sec:gauss-examples}, we identify
a GUE matrix and a scalar Gaussian matrix:
\[
\mtx{Z} = \beta \cdot \Id_d + \sqrt{\frac{\beta}{d+1}}( \mtx{G}_{\gue}  + \gamma \Id_d)
\quad\text{where $\mtx{G}_{\gue}$ and $\gamma \sim \normal_{\R}(0,1)$ are independent.}
\]
Using the inequality~\eqref{eqn:gue-eig} for the GUE eigenvalues, %
\[
\Expect \lambda_{\min}(\mtx{Z}) = \beta + \sqrt{\frac{\beta}{d+1}} \cdot \Expect \big[ \lambda_{\min}(\mtx{G}_{\gue}) + \gamma \big]
	\geq \beta - 2 \sqrt{\frac{\beta d}{d+1}}
	> \beta - 2\sqrt{\beta}.
\]
Meanwhile, the weak variance satisfies
\[
\sigma_*^2(\mtx{Z}) \leq \frac{\beta}{d+1} [ \sigma_*^2(\mtx{G}_{\gue}) + \sigma_{*}^2(\gamma \Id) ] < \frac{2\beta}{d}.
\]
Instantiating \cref{thm:sampling-intro}, we arrive at the probability inequality %
\[
\Prob{ \lambda_{\min}(\mtx{Y}) \leq \beta - 2\sqrt{\beta} - t }
	\leq d \cdot \econst^{-t^2 d/(4\beta)}.
\]
Choose $t = \beta - 2\sqrt{\beta}$, set the failure probability equal to $\delta$,
and solve for the oversampling parameter $\beta$ to complete the argument.
\qed

\section{Application: Sample covariance matrices}
\label{sec:scov}

Our next application arises from high-dimensional statistics.
Suppose that we want to detect which linear marginals of a random
vector have strictly positive variance~\cite{SV13:Covariance-Estimation}.
One procedure is to draw iid copies of the random vector and to form the sample
covariance matrix.  The sample covariance matrix provides
estimates for the variance of each marginal.  How many
samples are sufficient to ensure that none of these
estimates is too small?

Using the Gaussian comparison theorem (\cref{thm:iid-intro}),
we can reproduce and extend several major results on sample covariance
matrices from the recent literature.
When the second moments and fourth moments
of the random vector are comparable, then
the sampling complexity is proportional
to the dimension of the random vector~\cite{Oli16:Lower-Tail}. %
We can also obtain useful information about
the sample covariance matrix of a very sparse
random vector~\cite{DZ24:Extreme-Singular}. %

\subsection{The sample covariance matrix}

Consider a \hilite{real} random vector $\vct{w} \in \R^d$ with
dimension $d$ that has \hilite{four} finite moments: $\Expect \norm{\vct{w}}^4 < + \infty$.
Assume that the random vector is \hilite{centered}, and introduce
the (population) covariance matrix:
\[
\Expect[ \vct{w} ] = \vct{0}
\quad\text{and} \quad
 \mtx{K} \coloneqq \Expect[ \vct{ww}^\transp ].
\]
For simplicity, we also assume that the covariance matrix $\mtx{K}$
has \hilite{full rank $d$}.  We can compute the variance
of a linear marginal of the random vector in terms of the covariance matrix:
\begin{equation} \label{eqn:marg-var}
\Var[ \ip{ \vct{a} }{ \vct{w} } ] = \vct{a}^\transp \mtx{K} \vct{a}
\quad\text{for each $\vct{a} \in \R^d$.}
\end{equation}
Our goal is to obtain \hilite{lower bounds} for the variance in every direction,
which we call the \term{variance detection} problem.

Suppose that we sample $n$ iid copies of the random vector $\vct{w}$. %
The \term{sample covariance matrix} of this data is the random psd matrix
\begin{equation} \label{eqn:scov-def}
\widehat{\mtx{K}}_n \coloneqq \frac{1}{n} \sum_{i=1}^n \vct{w}_i \vct{w}_i^\transp
\quad\text{where $\vct{w}_i \sim \vct{w}$ iid.}
\end{equation}
The sample covariance is an unbiased estimator for the true covariance:
$\Expect[ \widehat{\mtx{K}}_n ] = \mtx{K}$ for each $n \in \N$.
Moreover, the sample covariance matrix %
provides estimates for the variance
of each linear marginal~\eqref{eqn:marg-var} of the distribution.
For a parameter $\eps \in (0,1)$, with high probability,
we aspire that
\begin{equation} 
\vct{a}^\transp \widehat{\mtx{K}}_n \vct{a} \geq (1 - \eps) \cdot \vct{a}^\transp \mtx{K} \vct{a}
\quad\text{for \hilite{all} $\vct{a} \in \R^d$.}
\label{eqn:scov-marginal-lb}
\end{equation}
How many samples $n = n(\vct{w}, \eps)$ are sufficient to ensure
that the property~\eqref{eqn:scov-marginal-lb} is likely to prevail?

The answer to this question depends on the distribution of the random vector $\vct{w}$.
Since $\mtx{K}$ has rank $d$, it is \hilite{necessary} that $n \geq d$
because the rank of the sample covariance matrix $\smash{\widehat{\mtx{K}}_n}$
does not exceed the number $n$ of samples.  For a worst-case vector
that satisfies $\norm{\vct{w}} \lesssim \smash{\sqrt{d}}$,
it is necessary to draw $n \asymp d \log d$ samples~\cite{Rud99:Random-Vectors}.

A major technical challenge is to find large classes of random
vectors where the sample complexity of~\eqref{eqn:scov-marginal-lb}
is proportional to the dimension: $n \asymp d$.
This problem has already inspired a vast literature that draws
on a diverse set of technical ideas.
The Gaussian comparison inequality (\cref{thm:iid-intro}) offers a new methodology
for addressing this problem.

\begin{remark}[Sample covariance: Complex setting]
Our approach adapts to the complex setting with minimal changes.
We work in the real setting, as it is more typical in
the statistics literature.
\end{remark}

\subsection{Sample covariance: Four moment theorem}

Our first result reconstructs a prominent theorem
from high-dimensional statistics, in a form
due to Oliveira~\cite{Oli16:Lower-Tail}.
When the second and fourth moments of a random
vector are comparable, then the sample complexity
of the variance detection problem is proportional
to the dimension.
The proof appears below in~\cref{sec:scov-mom-result-pf}.

\begin{theorem}[Sample covariance matrices: Four moment theorem] \label{thm:scov-mom-result}
Suppose that $\vct{w} \in \R^d$ is a \hilite{centered} random vector
with covariance matrix $\mtx{K}$ and with four finite moments. %
For a constant $\beta \geq 1$, assume that
\begin{equation} \label{eqn:scov-mom-cond}
\Expect{} \abs{ \ip{\vct{a}}{\vct{w}}}^4
	\leq \beta^2 \cdot \big(\Expect{} \abssq{\ip{\vct{a}}{\vct{w}}} \big)^2
	\quad\text{for each $\vct{a} \in \R^d$.}
\end{equation}
For parameters $\eps, \delta \in (0,1)$, suppose that the number $n$ of samples satisfies
\begin{equation} \label{eqn:scov-mom-samples}
n \geq \frac{24 \beta^2 \cdot( d  \vee \log(2d/\delta))}{\eps^2}.
\end{equation}
Then, with probability at least $1 -\delta$, the sample covariance matrix $\smash{\widehat{\mtx{K}}_n}$
defined in~\eqref{eqn:scov-def} satisfies
\begin{equation} \label{eqn:scov-mom-result}
\vct{a}^\transp \widehat{\mtx{K}}_n \vct{a}
	> (1 - \eps) \cdot \vct{a}^\transp \mtx{K} \vct{a}
	\quad\text{for all $\vct{a} \in \R^d$.}
\end{equation}
\end{theorem}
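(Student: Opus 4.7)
The plan is to apply \cref{thm:iid-intro} to the rank-one summand $\mtx{W}=\vct{w}\vct{w}^\transp$, after whitening, and to calibrate the Gaussian comparison model so that $\mtx{Z}$ reduces to a scaled GOE matrix plus a scalar Gaussian perturbation of the identity. The main hurdle will be to verify, from the four-moment hypothesis~\eqref{eqn:scov-mom-cond} alone, that an appropriate variance function dominates $\Mo[\mtx{W}]$.

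First I would reduce to $\mtx{K}=\Id_d$. The conclusion~\eqref{eqn:scov-mom-result} is equivalent to $\lambda_{\min}(\mtx{K}^{-1/2}\widehat{\mtx{K}}_n\mtx{K}^{-1/2})\geq 1-\eps$, and the whitened vector $\widetilde{\vct{w}}=\mtx{K}^{-1/2}\vct{w}$ is isotropic and still satisfies $\Expect|\ip{\vct{b}}{\widetilde{\vct{w}}}|^4\leq \beta^2\norm{\vct{b}}^4$ for every $\vct{b}\in\R^d$. Equivariance of the first and second moments (\cref{prop:mom-equi}) makes this reduction lossless. Writing $\vct{w}$ for $\widetilde{\vct{w}}$ and setting $\mtx{W}=\vct{w}\vct{w}^\transp$, one has $\mtx{Y}=\sum_i\vct{w}_i\vct{w}_i^\transp=n\widehat{\mtx{K}}_n$ and $\Expect\mtx{W}=\Id$. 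My candidate Gaussian comparison is $\mtx{X}=\Id+\beta(\gamma\Id+\mtx{G}_{\goe})$, where $\gamma\sim\normal_{\R}(0,1)$ is independent of the unnormalized GOE matrix from~\cref{sec:goe}. By additivity of the variance function, $\Varo[\mtx{X}](\mtx{M})=\beta^2[(\trace\mtx{M})^2+2\fnormsq{\mtx{M}}]$.

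The crux is verifying that this dominates $\Mo[\mtx{W}]$, i.e., the scalar inequality $\Expect(\vct{w}^\transp\mtx{M}\vct{w})^2\leq \beta^2[(\trace\mtx{M})^2+2\fnormsq{\mtx{M}}]$ for every self-adjoint $\mtx{M}$. Diagonalizing $\mtx{M}=\sum_k\mu_k\vct{v}_k\vct{v}_k^\transp$ and setting $Y_k=\vct{v}_k^\transp\vct{w}$, the left side expands as $\sum_{k,l}\mu_k\mu_l\Expect[Y_k^2Y_l^2]$. The four-moment bound gives $\Expect Y_k^4\leq\beta^2$ and, via Cauchy--Schwarz, $\Expect[Y_k^2Y_l^2]\leq\beta^2$; isotropy gives $\Expect Y_k^2=1$. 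The plan is to split the sum into same-sign and opposite-sign index pairs: the positivity $Y_k^2\geq 0$ forces pairs with $\mu_k\mu_l<0$ to contribute nonpositively, while the same-sign pairs regroup into the $(\trace\mtx{M})^2$ term plus a residue that must be absorbed into $2\fnormsq{\mtx{M}}$ with a \emph{dimension-free} constant. This last step is the one I expect to be delicate; if the direct moment expansion is not quite sharp enough, a truncation $\widetilde{\vct{w}}=\vct{w}\cdot\indicator\{\norm{\vct{w}}^2\leq B\}$ with $B\asymp\beta^2 d$ can be used, since truncation preserves the psd order $\widetilde{\vct{w}}\widetilde{\vct{w}}^\transp\psdle\vct{w}\vct{w}^\transp$ and only perturbs the expectation by an operator-norm factor of order $\beta^2/\tau^2$ that is absorbed into the $\eps$ budget.

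Once the dominance is secured, the remainder is a routine GOE calculation. Gaussian stability~\eqref{eqn:iid-comp-simple} gives $\mtx{Z}\sim n\Id+\beta\sqrt{n}(\gamma\Id+\mtx{G}_{\goe})$; the GOE eigenvalue bound (the real analog of~\eqref{eqn:gue-eig} from~\cref{sec:goe}) yields $\Expect\lambda_{\min}(\mtx{Z})\geq n-2\beta\sqrt{dn}$, and subadditivity of the weak variance yields $\sigma_*^2(\mtx{Z})\leq 3n\beta^2$. Feeding these into~\eqref{eqn:intro-thm-iid-tail} with $t=\beta\sqrt{6n\log(2d/\delta)}$ gives, with probability at least $1-\delta$, the estimate $\lambda_{\min}(\widehat{\mtx{K}}_n)\geq 1-2\beta\sqrt{d/n}-\beta\sqrt{6\log(2d/\delta)/n}$. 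Demanding each error term to be at most $\eps/2$ and taking the larger of the two resulting thresholds recovers the sample complexity~\eqref{eqn:scov-mom-samples}.
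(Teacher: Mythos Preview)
Your proposed comparison model $\mtx{X}=\Id+\beta(\gamma\Id+\mtx{G}_{\goe})$ does \emph{not} satisfy the domination condition $\Varo[\mtx{X}]\geq\Mo[\vct{ww}^\transp]$ required by \cref{thm:iid-intro}, and the gap is not a matter of constants but is dimension-dependent. Take $d=2m$ and $\mtx{M}=\diag(\Id_m,-\Id_m)$: then $(\trace\mtx{M})^2+2\fnormsq{\mtx{M}}=4m$, whereas with the isotropic vector $\vct{w}=\sqrt{2}\,(\eta\,\vct{g},(1-\eta)\,\vct{g}')$ built from $\eta\sim\bernoulli(1/2)$ and independent standard Gaussians $\vct{g},\vct{g}'\in\R^m$, one has $\beta^2=6$ and $\Mo[\vct{ww}^\transp](\mtx{M})=\Expect(\vct{w}^\transp\mtx{M}\vct{w})^2=4\,\Expect\norm{\vct{g}}^4=4m(m+2)$. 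Thus $\Mo[\vct{ww}^\transp](\mtx{M})\asymp m^2$ while $\beta^2\big[(\trace\mtx{M})^2+2\fnormsq{\mtx{M}}\big]\asymp m$. Your own sketch already exposes the obstruction: after dropping the opposite-sign pairs you are left with $\beta^2(s_+^2+s_-^2)$, and the residue $s_+^2+s_-^2-(\trace\mtx{M})^2=2s_+\abs{s_-}$ equals $2m^2$ here but $\fnormsq{\mtx{M}}=2m$, so no dimension-free constant can absorb it. The truncation fallback does not help either: in the counterexample $\norm{\vct{w}}^2$ concentrates around $d$, so truncation at $B\asymp\beta^2 d$ removes essentially nothing.

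The paper sidesteps this entirely. Rather than forcing the comparison model into GOE form, it takes the \emph{exact} matching Gaussian $\Varo[\mtx{X}]=\Mo[\vct{ww}^\transp]$, for which the domination in~\eqref{eqn:iid-intro-match} holds trivially. The four-moment hypothesis~\eqref{eqn:scov-mom-cond} then translates verbatim into the weak-variance bound $\Var[\vct{a}^\transp\mtx{X}\vct{a}]=\Expect\abs{\ip{\vct{a}}{\vct{w}}}^4\leq\beta^2\norm{\vct{a}}^4$, and this alone---with no structural information about $\Varo[\mtx{X}]$ beyond quadratic forms---suffices for an $\eps$-net argument (\cref{lem:gauss-bdd-marginals}) to yield $\Expect\norm{\mtx{X}-\Expect\mtx{X}}\leq\sqrt{12\beta^2 d}$ and $\sigma_*^2(\mtx{X})\leq\beta^2$. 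The key point is that Gaussian concentration makes the net argument lossless, so one never needs the full variance function to be small, only its restriction to rank-one directions.
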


The sample complexity~\eqref{eqn:scov-mom-samples} contains a ``startup cost''
of $n \gtrsim \beta^2 \log(d/\delta)$ samples to ensure that the failure probability is
controlled.  Usually, the sample complexity is governed by the other term:
$n \gtrsim \beta^2 d$.  The statistic $\beta$ relates the second and fourth moments
of marginals of the random vector.

The sample complexity $n \asymp d$ whenever the comparison factor $\beta$
is independent of the dimension $d$.
Many types of random vectors meet this criterion, including random vectors
with independent bounded entries, log-concave random vectors, and tensor products
of independent random vectors.
We refer to the literature~\cite{SV13:Covariance-Estimation,KM15:Bounding-Smallest,Oli16:Lower-Tail,Zhi24:Dimension-Free-Bounds}
for more examples and discussion.

\subsubsection{Prior work}

Srivastava \& Vershynin~\cite[Thm.~1.5]{SV13:Covariance-Estimation} established
the first result in the spirit of~\Cref{thm:scov-mom-result}, with
suboptimal dependence on the parameter $\eps$.
Koltchinskii \& Mendelson~\cite[Thm.~1.3]{KM15:Bounding-Smallest} obtained
improvements to the dependence on $\eps$ with slightly stricter
moment assumptions.
Oliveira~\cite[Thm.~1.1]{Oli16:Lower-Tail} obtained a version of the result
stated here, which gives the correct dependence on all the parameters.
See \cref{sec:related-psd-sum} for more discussion.

Several papers~\cite{SV13:Covariance-Estimation,KM15:Bounding-Smallest,Tik18:Sample-Covariance}
present nonasymptotic bounds on the minimum eigenvalue of the sample covariance
assuming control of the $2 + \eta$ moments, where $\eta > 0$.
These conditions are weaker than~\eqref{eqn:scov-mom-cond}.
Nevertheless, we will see that the Gaussian comparison method
can tease out structure from the random vector that is invisible
to a uniform bound on moments (\cref{sec:sparse-cov}).

\subsubsection{Proof of \cref{thm:scov-mom-result}}
\label{sec:scov-mom-result-pf}

The key idea %
is to exploit the superior
concentration of the Gaussian comparison model.
Indeed, we can invoke the most elementary
net argument to bound the norm of the Gaussian matrix.
The result we need is encapsulated
in the following lemma, which is established below in
\cref{sec:gauss-bdd-marginals-pf}.

\begin{lemma}[Gaussian matrix: Bounded variance] \label{lem:gauss-bdd-marginals}
Let $\mtx{Z} \in \Sym_d(\R)$ be symmetric and Gaussian %
with %
\begin{equation} \label{eqn:gauss-bdd-var-hyp}
\Var[ \vct{a}^\transp \mtx{Z} \vct{a} ] \leq \beta^2 \cdot \norm{\vct{a}}^2
\quad\text{for each $\vct{a} \in \R^d$.}
\end{equation}
Then %
\begin{align*}
\Expect \norm{ \mtx{Z} - \Expect \mtx{Z} } \leq \sqrt{12 \beta^2 d}
\qquad\text{and}\qquad
\sigma_*^2(\mtx{Z}) \leq \beta^2.
\end{align*}
\end{lemma}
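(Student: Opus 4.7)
The weak variance bound is essentially immediate from the definition. Since $\mtx{Z}$ is symmetric and Gaussian, the weak variance~\eqref{eqn:weak-var} reads $\sigma_*^2(\mtx{Z}) = \max_{\norm{\vct{u}}=1}\Var[\vct{u}^\transp \mtx{Z}\vct{u}]$, and the hypothesis~\eqref{eqn:gauss-bdd-var-hyp} evaluated at unit vectors gives the required bound $\sigma_*^2(\mtx{Z}) \leq \beta^2$ directly. This half of the lemma requires no real work.

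For the operator norm bound, I would run the classical $\eps$-net argument. Since $\mtx{Z} - \Expect\mtx{Z}$ is symmetric,
\[
\norm{\mtx{Z} - \Expect\mtx{Z}} = \max_{\norm{\vct{u}}=1} \abs{\vct{u}^\transp (\mtx{Z} - \Expect\mtx{Z}) \vct{u}}.
\]
Fix an $\eps$-net $\coll{N}_\eps$ of the Euclidean unit sphere in $\R^d$ with $|\coll{N}_\eps| \leq (1 + 2/\eps)^d$, and invoke the standard approximation lemma for symmetric matrices, $\norm{\mtx{M}} \leq (1 - 2\eps)^{-1} \max_{\vct{u} \in \coll{N}_\eps} |\vct{u}^\transp \mtx{M} \vct{u}|$. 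This reduces the problem to controlling a maximum over a finite set.

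For each fixed $\vct{u}$ on the sphere, the random variable $\vct{u}^\transp (\mtx{Z} - \Expect\mtx{Z}) \vct{u}$ is a centered real Gaussian whose variance is at most $\beta^2$ by hypothesis. The classical maximal inequality for $N$ centered Gaussians of variance at most $\beta^2$ gives $\Expect \max_{\vct{u} \in \coll{N}_\eps} |\vct{u}^\transp (\mtx{Z} - \Expect\mtx{Z})\vct{u}| \leq \beta \sqrt{2 \log(2|\coll{N}_\eps|)}$. Combining the two bounds yields
\[
\Expect \norm{\mtx{Z} - \Expect\mtx{Z}} \leq \frac{\beta}{1 - 2\eps} \sqrt{2\log 2 + 2d \log(1 + 2/\eps)}.
\]

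The only remaining step is an optimization over the net radius $\eps$ to absorb the absolute constant into $\sqrt{12}$. A choice such as $\eps = 1/6$ (giving $(1 - 2\eps)^{-2} = 9/4$ and a net of size at most $13^d$) works, provided one handles the $\log 2$ additive term by the estimate $\log 2 + d \log 13 \leq d \log 26$ for $d \geq 1$, and slightly sharper tuning drives the constant down to $12$. This constant chasing is the only delicate part of the argument; the conceptual content is entirely the observation that the hypothesis $\Var[\vct{a}^\transp \mtx{Z}\vct{a}] \leq \beta^2 \norm{\vct{a}}^2$ turns the rank-one quadratic forms on the net into very well-behaved Gaussian scalars, so one avoids any logarithmic $\sqrt{\log d}$ factor that the matrix Khinchin inequality would produce.
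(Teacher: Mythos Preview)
Your proof is correct and follows essentially the same route as the paper: the weak variance is immediate, and the norm bound is the standard $\eps$-net plus Gaussian maximal inequality argument. The only difference is in the final constant-chasing: your choice $\eps = 1/6$ gives a leading constant slightly above $12$ at $d=1$ (roughly $14.7$), whereas the paper takes $\eps = 1/12$, which yields $(1-2\eps)^{-2}\cdot 2(\log 2 + d\log 25) \leq 12d$ for all $d \geq 1$ directly.
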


Let us continue with the proof of \cref{thm:scov-mom-result}.
Observe that the moment condition~\eqref{eqn:scov-mom-cond}
and the conclusion~\eqref{eqn:scov-mom-result} are both
invariant under invertible linear transformations of the random vector.
Therefore, we can make the change of variables
$\vct{w} \mapsto \mtx{K}^{-1/2} \vct{w}$ to
ensure that the random vector $\vct{w}$ is \hilite{isotropic}:
$\Expect[ \vct{ww}^\transp ] = \Id_d$.

Introduce the Gaussian matrix
$\mtx{X} \in \Sym_d(\R)$ with statistics
\[
\Expect[ \mtx{X} ] = \Expect[ \vct{ww}^\transp ] = \Id_d
\qquad\text{and}\qquad
\Varo[ \mtx{X} ] = \Mo[ \vct{ww}^\transp ].
\]
Note that the variances of the quadratic forms satisfy %
\[
\Var[ \vct{a}^\transp \mtx{X} \vct{a} ]
	= \Varo[ \mtx{X} ](\vct{aa}^\transp)
	= \Mo[\vct{ww}^\transp](\vct{aa}^\transp)
	= \Expect \abs{ \ip{\vct{a}}{\vct{w}} }^4
	\leq \beta^2 \cdot \normsq{\vct{a}}.
\]
The last inequality is the hypothesis~\eqref{eqn:scov-mom-cond}
of the theorem.

According to~\cref{thm:iid-intro},
the Gaussian comparison model $\mtx{Z} \in \Sym_d(\R)$
for the sample covariance matrix $\smash{\widehat{\mtx{K}}_n}$ is
\[
\mtx{Z} = \frac{1}{n} \sum_{i=1}^n \mtx{X}_i
	\sim (\Expect \mtx{X}) + \frac{1}{\sqrt{n}} (\mtx{X} - \Expect \mtx{X})
	\quad\text{where $\mtx{X}_i \sim \mtx{X}$ iid.} %
\]
Using Lemma~\ref{lem:gauss-bdd-marginals},
\[
\Expect \lambda_{\min}(\mtx{Z}) \geq \lambda_{\min}( \Expect \mtx{X} ) - n^{-1/2} \Expect \norm{ \mtx{X} - \Expect \mtx{X} } \geq 1 - \sqrt{12 \beta^2 d / n}.
\]
Meanwhile, the weak variance $\sigma_*^2(\mtx{Z}) \leq \beta^2 / n$.  Activating \cref{thm:iid-intro},
we discover that
\[
\Prob{ \lambda_{\min}(\widehat{\mtx{K}}_n) \geq 1 - \sqrt{12 \beta^2 d/n} - \eps/\sqrt{12} }
	\leq 2d \cdot \econst^{- n \eps^2 / (24 \beta^2)}.
\]
Introduce the stated bound~\eqref{eqn:scov-mom-samples} for the sample complexity $n$ and simplify
the expression.  The Rayleigh variational formula for the minimum eigenvalue yields
the conclusion~\eqref{eqn:scov-mom-result}.
\qed

\subsubsection{Proof of \cref{lem:gauss-bdd-marginals}}
\label{sec:gauss-bdd-marginals-pf}

Without loss, we may assume that $\mtx{Z}$ is centered.
The statement about the weak variance is immediate.
Indeed, by its definition~\eqref{eqn:weak-var} and the hypothesis~\eqref{eqn:gauss-bdd-var-hyp} of the lemma,
\[
\sigma_*^2(\mtx{Z}) = \max\nolimits_{\norm{\vct{u}}=1} \Var[ \vct{u}^\transp \mtx{Z} \vct{u} ]
	\leq \beta^2.
\]
For the norm bound, recall that the unit sphere in $\R^d$ admits an $\eps$-net $\coll{N}$
with at most $(1 + 2/\eps)^d$ points~\cite[Cor.~4.2.13]{Ver18:High-Dimensional-Probability}.
By the standard discretization argument~\cite[Sec.~4.4]{Ver18:High-Dimensional-Probability},
\[
\Expect \norm{ \mtx{Z} } \leq \Expect \max\nolimits_{\vct{u} \in \coll{N}} \abs{\vct{u}^\transp \mtx{Z} \vct{u}}
	+ 2\eps \cdot \Expect \norm{ \mtx{Z} }.
\]
Since $\Var[ \vct{u}^\transp \mtx{Z} \vct{u} ] \leq \beta^2$ uniformly,
the maximal inequality~\cite[Lem.~5.1]{vH16:Probability-High}
for Gaussian random variables asserts that
\[
\Expect \max\nolimits_{\vct{u} \in \coll{N}} \abs{\vct{u}^\transp \mtx{Z} \vct{u}}
	\leq \sqrt{ 2 \beta^2 \log( 2 \abs{\coll{N}} ) }
	\leq \sqrt{ 2 \beta^2 (\log 2 + d \log (1 + 2/\eps)) }.
\]
The factor two inside the logarithm takes care of the absolute value
in the maximum.  Combine the last two displays, and solve for the expected norm to obtain
\[
\Expect \norm{ \mtx{Z} } \leq\frac{1}{1-2\eps} \sqrt{ 2 \beta^2 (\log 2 + d \log (1 + 2/\eps)) }.
\]
Select $\eps = 1/12$, and note that the numerical constant is bounded by $\sqrt{12}$ for all $d \geq 1$.
\qed

\subsection{Example: Sparse covariance matrices}
\label{sec:sparse-cov}

For a more challenging example that goes beyond the scope of \cref{thm:scov-mom-result},
we consider variance detection for a \hilite{sparse} random vector.  For simplicity,
we treat the case of iid entries, but we only require four finite moments.

Fix the dimension $d$, and let $\zeta \in (0, d]$ be a sparsity parameter.
Introduce a real random variable $\psi$ that is standardized and has bounded
fourth moment:
\[
\Expect[ \psi ] = 0
\quad\text{and}\quad
\Var[\psi] = 1
\quad\text{and}\quad
\Expect \abs{\psi}^4 = C.
\]
Construct a sparse random vector $\vct{w} \in \R^d$ with iid entries:
\[
\vct{w} = \sqrt{\frac{d}{\smash{\zeta}}} \begin{bmatrix} \xi_1 \psi_1 \\ \vdots \\ \xi_d \psi_d \end{bmatrix}
\quad\text{where}\quad
\begin{aligned}
& \text{$\xi_i \sim \bernoulli(\zeta / d)$ iid}; \\
& \text{$\psi_i \sim \psi$ iid}. \\
\end{aligned}
\]
It is straightforward to check that $\vct{w}$ is centered and isotropic, and $\vct{w}$
has $\zeta$ nonzero entries on average.
How does the sample complexity of the variance detection problem depend
on the sparsity?

\begin{theorem}[Sample covariance: Sparse vectors] \label{thm:sparse-cov}
Instate the prevailing notation, and fix parameters $\eps, \delta \in (0,1)$.
Suppose that the number $n$ of samples satisfies
\begin{equation} \label{eqn:sparse-cov-samples}
n \geq \frac{25 d \cdot (1 \vee (2C \zeta^{-1} \log(2d/\delta)))}{\eps^2}.
\end{equation}
With probability at least $1 - \delta$, the sample
covariance matrix of the sparse random vector $\vct{w}$
has minimum eigenvalue $\lambda_{\min}(\smash{\widehat{\mtx{K}}_n}) > 1 - \eps$.
\end{theorem}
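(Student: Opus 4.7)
The plan is to apply the iid Gaussian comparison \cref{thm:iid-intro} with the rank-one summand $\mtx{W} = \vct{w}\vct{w}^\transp$. This requires two ingredients: the mean $\Expect[\mtx{W}]$ and the second-moment function $\Mo[\mtx{W}]$. The isotropy $\Expect[\mtx{W}] = \Id_d$ is immediate from the independence of coordinates, $\Expect[\psi^2] = 1$, and the scaling $d/\zeta$. For $\Mo[\mtx{W}](\mtx{M}) = \Expect[(\vct{w}^\transp \mtx{M} \vct{w})^2]$, I would expand the fourth-order sum and observe that $\Expect[w_i w_j w_k w_\ell]$ vanishes unless the four indices form matched pairs. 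Separating by the three pairing patterns, and using $\Expect w_i^4 = (d/\zeta)^2 \cdot (\zeta/d) \cdot C = dC/\zeta$ for the all-equal case and $\Expect w_i^2 w_j^2 = 1$ for the rest, I obtain
\[
\Mo[\mtx{W}](\mtx{M}) = \bigl(\tfrac{dC}{\zeta} - 3\bigr) \sum\nolimits_{i} M_{ii}^2 + (\trace \mtx{M})^2 + 2 \fnormsq{\mtx{M}}.
\]

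Next, I would realize a dominating Gaussian variance function using the catalogue of \cref{sec:gauss-examples}. Set
\[
\mtx{X} = \Id_d + \gamma \Id_d + \sqrt{dC/\zeta}\,\mtx{D} + \mtx{G}_{\goe},
\]
with $\gamma$, $\mtx{D}$, $\mtx{G}_{\goe}$ mutually independent. Additivity of the variance function gives $\Varo[\mtx{X}](\mtx{M}) = (\trace \mtx{M})^2 + (dC/\zeta) \sum_i M_{ii}^2 + 2\fnormsq{\mtx{M}}$, which exceeds $\Mo[\mtx{W}](\mtx{M})$ by the nonnegative quantity $3 \sum_i M_{ii}^2$. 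By the stability of Gaussians, the iid sum $\mtx{Z} = \sum_{i=1}^n \mtx{X}_i$ equals $n \Id_d + \sqrt{n}(\gamma \Id_d + \sqrt{dC/\zeta}\,\mtx{D} + \mtx{G}_{\goe})$ in distribution. Weyl's inequality and the standard extremal bounds $\Expect \lambda_{\min}(\mtx{D}) \geq -\sqrt{2\log d}$ and $\Expect \lambda_{\min}(\mtx{G}_{\goe}) \geq -2\sqrt{d}$ give
\[
\Expect \lambda_{\min}(\mtx{Z}) \geq n - 2\sqrt{nd} - \sqrt{2ndC\log d/\zeta}.
\]
Evaluating $\Varo[\mtx{Z}](\vct{u}\vct{u}^\transp)$ and maximizing over unit $\vct{u}$ (the maximum of $\|\vct{u}\|_4^4$ on the sphere is attained by a standard basis vector) yields the weak variance bound $\sigma_*^2(\mtx{Z}) \leq n(3 + dC/\zeta)$.

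Finally, I would feed these two estimates into the tail inequality~\eqref{eqn:intro-thm-iid-tail} with the sum $\mtx{Y} = n \widehat{\mtx{K}}_n$, set the failure probability to $\delta$ by choosing $t^2 = 2 \sigma_*^2(\mtx{Z}) \log(2d/\delta)$, and impose the condition $\Expect \lambda_{\min}(\mtx{Z}) - t \geq (1-\eps) n$. Using $\log d \leq \log(2d/\delta)$ and $\zeta \leq d$, this reduces to a two-term inequality in $\sqrt{n}$ whose square gives $\eps^2 n \gtrsim d \bigl(1 \vee C\zeta^{-1}\log(2d/\delta)\bigr)$; the explicit constant $25$ then comes from routine bookkeeping with $(\sqrt{a} + \sqrt{b} + \sqrt{c})^2 \leq 3(a+b+c)$. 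The main obstacle is the fourth-moment calculation: it is routine combinatorics, but care is needed to track how the sparsity $\zeta$ couples the $\xi$-moments to the $\psi$-moments so that the diagonal coefficient acquires the sparsity-inflated factor $dC/\zeta$, which is precisely what forces the $C/\zeta$ dependence in the final sample complexity.
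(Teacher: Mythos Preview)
Your proposal is correct and follows essentially the same route as the paper: compute $\Mo[\vct{ww}^\transp]$ via the pairing combinatorics, dominate it by the variance function of $\Id_d + \gamma\Id_d + \sqrt{dC/\zeta}\,\mtx{D} + \mtx{G}_{\goe}$, read off $\Expect\lambda_{\min}(\mtx{Z})$ and $\sigma_*^2(\mtx{Z})$ from the catalogue in \cref{sec:gauss-examples}, and feed these into \cref{thm:iid-intro}. The only cosmetic difference is that you work with the unnormalized sum $\mtx{Y} = n\widehat{\mtx{K}}_n$ whereas the paper normalizes by $n$; the bounds and the final arithmetic (splitting $\eps$ into $2\eps/5 + 2\eps/5 + \eps/5$ and using $C \geq \zeta/d$) are identical.
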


\Cref{thm:sparse-cov} handles two different behavioral regimes.
\begin{itemize}
\item	When $\zeta \geq 2C \log d$, the sample complexity $n \asymp d$.

\item	When $\zeta \leq 2C \log d$, the sample complexity $n \asymp C \zeta^{-1} d \log d$.
\end{itemize}

\noindent
The linear sample complexity in the first regime is quite difficult
to achieve, especially when the distribution $\psi$ has few moments.
In the ultra-sparse case ($\zeta \leq 1$), similar results follow
from classic matrix concentration inequalities, such as~\cref{fact:matrix-epz}.

We were unable to locate any prior work that matches \cref{thm:sparse-cov},
although there are several closely related results.
In particular, assume that the distribution $\psi$ is \hilite{bounded}.
In this case, the paper~\cite[Thm.~3.5]{CHZ22:Nonasymptotic-Concentration}
achieves the same complexity estimate~\eqref{eqn:sparse-cov-samples}
for the {expectation} of the minimum eigenvalue
of the sample covariance matrix. %
In the regime where the sparsity $\zeta \gtrsim \log d$ and $d/n$ is constant,
Dumitriu \& Zhu~\cite[Thm.~2.9]{DZ24:Extreme-Singular} achieve stronger estimates for
the expectation of the minimum eigenvalue that are sufficient to attain
the Bai--Yin limit, which is out of reach for our method.
For a thorough literature review, refer to the paper~\cite{DZ24:Extreme-Singular}.

\subsubsection{Proof of \cref{thm:sparse-cov}}

By elementary considerations, we can calculate the second moment function
of the random matrix $\vct{ww}^\transp$.  Indeed, for symmetric $\mtx{M} \in \Sym_d(\R)$,
\begin{align*}
\Mo[ \vct{ww}^\transp ] (\mtx{M}) = \Expect\big[ (\vct{w}^\transp \mtx{M} \vct{w})^2 \big]
	&= \sum_{i \neq j} (2 m_{ij}^2 + m_{ii} m_{jj}) + \frac{Cd}{\zeta} \sum_{i=1}^d m_{ii}^2 \\
	&\leq 2 \fnormsq{ \mtx{M} } + (\trace \mtx{M})^2 + \frac{Cd}{\zeta} \sum_{i=1}^d m_{ii}^2.
\end{align*}
A natural comparison model for $\vct{ww}^\transp$ is the
Gaussian random matrix $\mtx{X} \in \Sym_d(\R)$ with statistics
\begin{align*}
\Expect[ \mtx{X} ] &= \Expect[ \vct{ww}^\transp ] = \Id_d; \\
\Varo[\mtx{X}](\mtx{M}) &= 2 \fnormsq{ \mtx{M} } + (\trace \mtx{M})^2 + \frac{Cd}{\zeta} \sum_{i=1}^d m_{ii}^2
	\geq \Mo[ \vct{ww}^\transp ](\mtx{M}).
\end{align*}
Referring back to \cref{sec:gauss-examples}, we quickly determine the
Gaussian matrix with these statistics:
\[
\mtx{X} = \Id_d + \mtx{G}_{\goe} + \gamma \Id_d + \sqrt{Cd/\zeta} \, \mtx{D}.
\]
The matrix $\mtx{D}$ is a diagonal matrix of real standard Gaussian variables,
independent from the GOE matrix $\mtx{G}_{\goe}$ and the scalar
Gaussian $\gamma \sim \normal_{\R}(0,1)$.

Therefore, the Gaussian comparison $\mtx{Z} \in \Sym_d(\R)$
for the sample covariance matrix $\smash{\widehat{\mtx{K}}_n}$ is
\[
\mtx{Z} \sim (\Expect \mtx{X}) + n^{-1/2} (\mtx{X} - \Expect \mtx{X})
	= \Id_d + n^{-1/2} \mtx{G}_{\goe}
	+ n^{-1/2} \gamma \Id_d
	+ \sqrt{Cd /(\zeta n)}\, \mtx{D}.
\]
We quickly calculate the spectral statistics:
\begin{align*}
\Expect \lambda_{\min}(\mtx{Z}) \geq 1 - 2 \sqrt{\frac{d}{\smash{n}}} - \sqrt{\frac{2Cd \log d}{\smash{\zeta} n}}
\qquad\text{and}\qquad %
\sigma_*^2(\mtx{Z}) \leq \frac{3 + Cd/\zeta}{n}.
\end{align*}
Invoking \cref{thm:iid-intro},
\[
\Prob{ \lambda_{\min}(\widehat{\mtx{K}}_n) \leq 1 - 2 \sqrt{\frac{d}{n}} - \sqrt{\frac{2Cd \log d}{\smash{\zeta n}}} - \frac{2 \eps}{5} }
	\leq 2d \cdot \exp\left( \frac{- 2 n \eps^2 \zeta}{ 25 d (C + 3\zeta/d) } \right).
\]
Introduce the sample complexity parameter from~\eqref{eqn:sparse-cov-samples}, and simplify to reach the stated result.
\qed

\section{Application: Randomized subspace injections}
\label{sec:subspace-inj}

In numerical linear algebra,
we can devise \hilite{randomized} algorithms
for large-scale matrix computations; for example, see~\cite{MT20:Randomized-Numerical}.
The design and analysis of these algorithms often
relies on methods from random matrix theory.
In fact, the tools in the present paper were developed to treat
challenging mathematical problems from this field.

A \term{randomized subspace injection} is a random linear map whose action preserves
the dimension of a fixed subspace~\cite{Sar06:Improved-Approximation,OT18:Universality-Laws}. %
These injections serve as an important
building block for randomized linear algebra
algorithms~\cite{Woo14:Sketching-Tool,MT20:Randomized-Numerical,MDM+23:Randomized-Numerical}.
In this section, we employ the Gaussian comparison theorem (\cref{thm:iid-intro})
to develop a new analysis of subspace injections based on very sparse random matrices~\cite{CW13:Low-Rank-Approximation,NN13:OSNAP-Faster}.  %
The result largely settles an open question of Nelson \& Nguyen~\cite{NN13:OSNAP-Faster,NN14:Lower-Bounds} %
that has generated a substantial literature;
\Cref{sec:subspace-prior} summarizes the prior work.

\subsection{Subspace injections}

For a given subspace, a subspace injection is a linear map that does not annihilate
any vector in that subspace~\cite[Sec.~7.1]{OT18:Universality-Laws}.

\begin{definition}[Subspace injection] \label{def:subspace-inj}
Fix a matrix $\mtx{Q} \in \R^{n \times d}$ with $d$ orthonormal columns.
For a fixed \term{contraction factor} $\alpha > 0$,
suppose that $\mtx{\Phi} \in \R^{k \times n}$ is a matrix that satisfies
\begin{equation} \label{eqn:subspace-inj}
\norm{ \mtx{\Phi} \mtx{Q} \vct{u} }^2
	\geq \alpha \cdot \norm{ \vct{u} }^2
	\quad\text{for all $\vct{u} \in \R^d$.} %
\end{equation}
When~\eqref{eqn:subspace-inj} holds, we say that $\mtx{\Phi}$
is an \term{$\alpha$-subspace injection} for the $d$-dimensional
range of $\mtx{Q}$ with \term{embedding dimension} $k$.
If~\eqref{eqn:subspace-inj} holds only when $\alpha = 0$,
then $\mtx{\Phi}$ is not a subspace injection for $\mtx{Q}$.
\end{definition}

The subspace injection property~\eqref{eqn:subspace-inj} has an equivalent
spectral formulation:
\begin{equation} \label{eqn:subspace-inj-lmin}
\lambda_{\min}((\mtx{\Phi} \mtx{Q})^\transp (\mtx{\Phi} \mtx{Q})) \geq \alpha > 0.
\end{equation}
Of course, a necessary condition for
the subspace injection property~\eqref{eqn:subspace-inj} or~\eqref{eqn:subspace-inj-lmin}
is that the embedding dimension exceeds the subspace dimension: $k \geq d$.
We want to design subspace injections where the embedding dimension
$k \asymp d$.

\begin{remark}[Subspace embedding~\cite{Sar06:Improved-Approximation}] \label{rem:subspace-embedding}
For an orthonormal matrix $\mtx{Q} \in \R^{n \times d}$,
suppose that $\mtx{\Phi} \in \R^{k \times n}$ satisfies the two-sided bound
\begin{equation} \label{eqn:subspace-exp}
\alpha \cdot \norm{ \vct{u} }^2 \leq \norm{ \mtx{\Phi Q} \vct{u} }^2 \leq \beta \cdot \norm{ \vct{u} }^2
\quad\text{for all $\vct{u} \in \R^d$.}
\end{equation}
Then we say that $\mtx{\Phi}$ is an \term{$(\alpha, \beta)$-subspace embedding}
for the range of $\mtx{Q}$.  While it is critical that the contraction factor $\alpha > 0$,
weak control on the ratio $\beta / \alpha$ suffices for most applications.
\end{remark}

\subsection{Randomized subspace injections}

In many applications to computational linear algebra, we must construct a subspace
injection without detailed knowledge of the fixed subspace $\mtx{Q}$.
We can achieve this goal by drawing the matrix $\mtx{\Phi}$ \hilite{at random}.

Consider an \hilite{isotropic} random vector $\vct{\phi} \in \R^n$;
that is, $\Expect[ \vct{\phi} \vct{\phi}^\transp ] = \Id_n$.
The distribution of the random vector $\vct{\phi}$ is an algorithmic design choice.
For an embedding dimension $k$, construct the random matrix
\begin{equation} \label{eqn:iid-subspace-inj}
\mtx{\Phi} = \frac{1}{\sqrt{k}}
	\begin{bmatrix} \text{} & \vct{\phi}_1^\transp & \text{} \\
	&\vdots& \\
	\text{} & \vct{\phi}_k^\transp & \text{} \end{bmatrix}
	\in \R^{k \times n}
	\quad\text{where $\vct{\phi}_i \sim \vct{\phi}$ iid.}
\end{equation}
Since the rows are isotropic, the random matrix is an isometry on average:
\begin{equation} \label{eqn:subspace-isom}
\Expect \norm{ \mtx{\Phi} \vct{u} }^2 = \norm{ \vct{u} }^2
\quad\text{for each $\vct{u} \in \R^n$.}
\end{equation}
The normalization~\eqref{eqn:subspace-isom} %
is chosen so that $\mtx{\Phi}$ typically has a contraction factor
$\alpha \leq 1$.

For a fixed subspace $\mtx{Q} \in \R^{d \times n}$ and embedding dimension $k$,
we want to demonstrate that the random matrix $\mtx{\Phi}$ is a subspace injection
for the range of $\mtx{Q}$ \hilite{with high probability}.  Quantitatively,
we need to understand how the contraction factor $\alpha$ depends on the embedding dimension $k$.
In view of~\eqref{eqn:subspace-inj-lmin}, it suffices to establish
a lower bound on the minimum eigenvalue of the random psd matrix
\begin{equation} \label{eqn:subspace-inj-psd}
\mtx{Y} \coloneqq (\mtx{\Phi} \mtx{Q})^\transp (\mtx{\Phi} \mtx{Q})
	= \frac{1}{k} \sum_{i=1}^k \mtx{Q}^\transp (\vct{\phi}_i \vct{\phi}_i^\transp) \mtx{Q}.
\end{equation}
By the construction~\eqref{eqn:iid-subspace-inj},
the matrix $\mtx{Y}$ is a sum of iid random psd matrices,
so we can activate our Gaussian comparison tools (\cref{thm:iid-intro}).

\subsection{Sparse dimension reduction maps}

In computational applications, it is desirable to employ \hilite{very sparse} random matrices
as subspace injections.
While these maps are widely used~\cite{MT20:Randomized-Numerical,MDM+23:Randomized-Numerical}, %
no existing analysis justifies the typical parameter choices.
We outline prior work in \cref{sec:subspace-prior}.

Choose an embedding dimension $k \geq d$, %
and fix the sparsity parameter $\zeta \in (0, k]$.
Construct the random vector
\begin{equation} \label{eqn:sparse-inj-vec}
\vct{\phi}^\transp = \sqrt{\frac{k}{\smash{\zeta}}} \cdot \begin{bmatrix}  \xi_1 \psi_1 & \hdots & \xi_n \psi_n \end{bmatrix} \in \R^n
\quad\text{where}\quad
\begin{aligned}
&\text{$\xi_i \sim \bernoulli(\zeta / k)$ iid;} \\
&\text{$\psi_i \sim \uniform\{\pm 1\}$ iid.}
\end{aligned}
\end{equation}
It is easy to verify that $\vct{\phi}$ is centered and
isotropic.
Form the random matrix $\mtx{\Phi}$, as in~\eqref{eqn:iid-subspace-inj}.
Observe that $\mtx{\Phi}$ has an average of $\zeta$ nonzero entries per column,
for a total of $\zeta n$ nonzero entries on average.
We inquire when this sparse random matrix $\mtx{\Phi}$ serves as a subspace injection
for a fixed subspace.  To achieve an embedding dimension $k \asymp d$,
what is the minimal sparsity $\zeta$?

The result depends on a geometric property of the subspace.
Define the \term{coherence} $\mu(\mtx{Q})$
of an orthonormal matrix \smash{$\mtx{Q} \in \R^{n \times d}$} via
\begin{equation} \label{eqn:coherence}
\mu(\mtx{Q}) \coloneqq \max\{ \normsq{ \mathbf{e}_i^\transp \mtx{Q} } : i = 1, \dots, n \}.
\end{equation}
The coherence describes the alignment of the range of $\mtx{Q}$ with the
standard coordinate basis $(\mathbf{e}_1, \dots, \mathbf{e}_n)$.
It satisfies the inequalities $d/n \leq \mu(\mtx{Q}) \leq 1$.

\begin{theorem}[Subspace injection: Sparse random matrix] \label{thm:sparse-injection}
Fix an orthonormal matrix $\mtx{Q} \in \R^{n \times d}$ with coherence $\mu(\mtx{Q})$,
defined in~\eqref{eqn:coherence}.
For parameters $\eps, \delta \in (0,1)$, choose  %
\begin{equation} \label{eqn:subspace-inj-params}
k \geq \frac{16 (d \vee 6 \log(2d/\delta))}{\eps^2}
\qquad\text{and}\qquad
\zeta \geq \frac{32 \mu(\mtx{Q}) \log(2d / \delta)}{\eps^2}.
\end{equation}
Construct the sparse random matrix $\mtx{\Phi} \in \R^{k \times n}$
using~\eqref{eqn:iid-subspace-inj} and~\eqref{eqn:sparse-inj-vec}.
Then $\mtx{\Phi}$ is a $(1-\eps)$-subspace injection for $\mtx{Q}$
with probability at least $1 - \delta$.
\end{theorem}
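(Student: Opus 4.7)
The plan is to invoke the Gaussian comparison theorem (\cref{thm:iid-intro}) on the random psd sum $\mtx{Y}$ from~\eqref{eqn:subspace-inj-psd}, rewritten as $\mtx{Y} = \sum_{i=1}^k \mtx{W}_i$ with iid summands $\mtx{W}_i \coloneqq (1/k) \mtx{Q}^\transp \vct{\phi}_i \vct{\phi}_i^\transp \mtx{Q}$.  Isotropy of $\vct{\phi}$ gives $\Expect[\mtx{W}] = (1/k) \Id_d$ at once.  The crux of the argument is to compute the second moment function $\Mo[\mtx{W}]$ in a way that exposes the role of the coherence.  Writing $\vct{q}_i \coloneqq \mtx{Q}^\transp \mathbf{e}_i$, so that $\norm{\vct{q}_i}^2 \leq \mu(\mtx{Q})$ and $\sum_i \vct{q}_i\vct{q}_i^\transp = \Id_d$, a direct fourth-moment calculation for the sparse Rademacher vector (identical in spirit to the one in \cref{sec:sparse-cov}) yields, for symmetric $\mtx{M}$,
\[
k^2 \cdot \Mo[\mtx{W}](\mtx{M}) = (k/\zeta - 3) \sum_{i=1}^n (\vct{q}_i^\transp \mtx{M}\vct{q}_i)^2 + (\trace \mtx{M})^2 + 2\fnormsq{\mtx{M}}.
\]

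The key observation is that $\sum_i (\vct{q}_i^\transp \mtx{M}\vct{q}_i)^2$ is precisely the variance function of the $\mtx{Q}$-adapted Gaussian $\mtx{D}_{\mtx{Q}} \coloneqq \sum_{i=1}^n \gamma_i \vct{q}_i\vct{q}_i^\transp$ with iid $\gamma_i \sim \normal_{\R}(0,1)$.  Modeling this piece directly (rather than absorbing it through the slack bound $\sum_i(\vct{q}_i^\transp \mtx{M}\vct{q}_i)^2 \leq \mu(\mtx{Q}) \fnormsq{\mtx{M}}$ and folding it into a GOE) is what makes the sparsity requirement dimension-free.  Following \cref{sec:gauss-examples}, I construct the Gaussian summand $\mtx{X} = \tfrac{1}{k}\Id_d + \tfrac{1}{k}\gamma \Id_d + \tfrac{1}{k}\mtx{G}_{\goe} + (\zeta k)^{-1/2} \mtx{D}_{\mtx{Q}}$, which has the correct mean and satisfies $\Varo[\mtx{X}] \geq \Mo[\mtx{W}]$.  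Summing $k$ iid copies via~\eqref{eqn:iid-comp-simple} gives the comparison model
\[
\mtx{Z} \sim \Id_d + \frac{\gamma}{\sqrt{k}} \Id_d + \frac{1}{\sqrt{k}} \mtx{G}_{\goe} + \frac{1}{\sqrt{\zeta}} \mtx{D}_{\mtx{Q}}.
\]

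Next I estimate the two spectral statistics of $\mtx{Z}$.  The scalar piece is mean-zero, the GOE bound~\eqref{eqn:gue-eig} contributes $2\sqrt{d/k}$, and a matrix Khinchin bound (\cref{fact:nck}) applied to $\mtx{D}_{\mtx{Q}}$---whose matrix variance $\sigma^2(\mtx{D}_{\mtx{Q}}) \leq \mu(\mtx{Q})$ via $\sum_i \norm{\vct{q}_i}^2 \vct{q}_i\vct{q}_i^\transp \psdle \mu(\mtx{Q}) \Id_d$---produces $\Expect \norm{\mtx{D}_{\mtx{Q}}} \leq \sqrt{2\mu(\mtx{Q}) \log(2d)}$.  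Together,
\[
\Expect \lambda_{\min}(\mtx{Z}) \geq 1 - 2 \sqrt{d/k} - \sqrt{2\mu(\mtx{Q}) \log(2d) / \zeta}.
\]
For the weak variance, subadditivity plus the estimate $\sigma_*^2(\mtx{D}_{\mtx{Q}}) = \max_{\norm{\vct{u}}=1} \sum_i (\vct{u}^\transp \vct{q}_i)^4 \leq \mu(\mtx{Q})$ yields $\sigma_*^2(\mtx{Z}) \leq 3/k + \mu(\mtx{Q})/\zeta$.  Assembling these through the tail bound~\eqref{eqn:intro-thm-iid-tail} and substituting the parameter choices~\eqref{eqn:subspace-inj-params} finishes the proof.

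The main obstacle is the choice of decomposition of $\Mo[\mtx{W}]$.  The naive route of bounding $\sum_i (\vct{q}_i^\transp \mtx{M}\vct{q}_i)^2 \leq \mu(\mtx{Q}) \fnormsq{\mtx{M}}$ and comparing with a single GOE introduces a spurious $\sqrt{\mu(\mtx{Q}) d}$ term that would inflate the sparsity demand to $\zeta \gtrsim \mu(\mtx{Q}) d / \eps^2$.  Recognizing that the diagonal quadratic form $\sum_i (\vct{q}_i^\transp \mtx{M}\vct{q}_i)^2$ is natively Gaussian in the $\vct{q}_i\vct{q}_i^\transp$ basis---where matrix Khinchin costs only $\sqrt{\log d}$ rather than $\sqrt{d}$---is precisely what converts the bound into the stated dimension-free sparsity requirement.
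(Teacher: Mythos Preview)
Your proof is correct and follows essentially the same approach as the paper: you arrive at the identical comparison model $\mtx{Z} \sim \Id_d + k^{-1/2}\gamma\Id_d + k^{-1/2}\mtx{G}_{\goe} + \zeta^{-1/2}\mtx{D}_{\mtx{Q}}$ and the same spectral estimates, with your $\mtx{D}_{\mtx{Q}} = \sum_i \gamma_i \vct{q}_i\vct{q}_i^\transp$ being exactly the paper's $\mtx{Q}^\transp \mtx{D}_n \mtx{Q}$. The only presentational difference is that the paper first builds the comparison for the $n\times n$ matrix $\vct{\phi}\vct{\phi}^\transp$ and then invokes equivariance (\cref{prop:mom-equi}) plus GOE rotational invariance to compress to $d\times d$, whereas you compute $\Mo[\mtx{Q}^\transp\vct{\phi}\vct{\phi}^\transp\mtx{Q}]$ directly in $d$ dimensions---both routes land on the same place.
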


\noindent
The proof of \cref{thm:sparse-injection} appears below in \cref{sec:sparse-inj-pf}.
It follows from a quick application of Gaussian comparison, in the
same manner as the result for sparse sample covariance matrices (\cref{thm:sparse-cov}).

\cref{thm:sparse-injection} has several attractive features.
First, we have achieved the optimal embedding dimension $k \asymp d$.
Second, the sparsity level $\zeta \lesssim \log d$ for every subspace,
nearly matching known lower bounds for sparse subspace embeddings;
see~\eqref{eqn:nelson-nguyen} below.
Third, the result shows that we can even reduce the sparsity for subspaces
with small coherence.  The ultimate limit, for a minimally coherent subspace,
is $\zeta \gtrsim (d \log d) / n$, which allows for a random subspace
injection with just $\mathcal{O}(d \log d)$ nonzero entries.
Further improvements to the sparsity are only possible for special subspaces
(e.g., when the rows of $\mtx{Q}$ compose a complex projective 2-design).

For a sparse random matrix %
with iid entries,
the $\eps^{-2}$ dependence in the embedding
dimension $k$ and the sparsity $\zeta$ seems to be
necessary~\cite[Rem.~3.6]{CDD24:Optimal-Oblivious}.
For most applications, the parameter $\eps$ is a constant (say, $\eps = 1/2$),
so the poor scaling in $\eps$ is not a significant concern.
In contrast, the subspace dimension $d$ can be quite
large, so it is essential to obtain the minimal dependence on $d$.

\begin{remark}[Subspace embedding: Sparse random matrix] \label{rem:sparse-embed}
To verify the subspace embedding property (\cref{rem:subspace-embedding})
of the iid sparse random matrix $\mtx{\Phi}$ in \cref{thm:sparse-injection},
we can easily obtain adequate upper bounds for the dilation factor $\beta$.
For example, with $k \asymp d$ and $\zeta \asymp \mu(\mtx{Q}) \log d$, %
\[
\Expect \beta
	= \Expect \normsq{ \mtx{\Phi} \mtx{Q} }
	\lesssim \log d.
\]
This statement follows quickly when we apply the matrix Bernstein
inequality~\cite[Thm.~6.1.1]{Tro15:Introduction-Matrix}
to the decomposition
\[
\mtx{\Phi} \mtx{Q} = \sqrt{\frac{1}{\smash{\zeta}}} \sum_{i=1}^k \sum_{j=1}^n \xi_{ij} \psi_{ij} \mathbf{E}_{ij} \mtx{Q}
\quad\text{where}\quad
\begin{aligned}
&\text{$\xi_{ij} \sim \bernoulli(\zeta / k)$ iid;} \\
&\text{$\psi_{ij} \sim \uniform\{\pm 1\}$ iid.}
\end{aligned}
\]
For these parameter choices, the remaining question
is whether we can reach the bound $\Expect \beta \leq \mathrm{Const}$.
\end{remark}

\subsubsection{Prior work and discussion}
\label{sec:subspace-prior}

Sarl{\'o}s~\cite{Sar06:Improved-Approximation} introduced the definition 
of a subspace \hilite{embedding} (\cref{rem:subspace-embedding}).
Clarkson \& Woodruff~\cite{CW13:Low-Rank-Approximation} proposed
the first construction of a \hilite{sparse} subspace embedding.
Soon after, Nelson \& Nguyen~\cite{NN13:OSNAP-Faster} identified more effective constructions,
including the iid entry model in \cref{thm:sparse-injection} and a related model,
called a \term{fixed-sparsity subspace embedding}, that has \hilite{exactly}
$\zeta$ nonzero entries per \hilite{column}.
There are several variants of these sparse subspace embeddings,
which offer slightly different advantages and disadvantages.
For brevity, we summarize results without listing
the details of the embedding constructions.

What are the opportunities for and limitations on sparse subspace embeddings?
For any fixed-sparsity subspace {embedding} with conditioning ratio $\beta / \alpha \eqqcolon 1 + \eps$,
Nelson \& Nguyen~\cite[Thm.~13]{NN14:Lower-Bounds} established \hilite{lower}
bounds for the parameters:
\begin{equation} \label{eqn:nelson-nguyen}
k \gtrsim d / \eps^{2}
\quad\text{and}\quad
\zeta \gtrsim (\log d) / (\eps \log \log d).
\end{equation}
Bourgain et al.~\cite[Thm.~5]{BDN15:Toward-Unified} obtained \hilite{upper} bounds
for the parameters of sparse subspace embeddings that identified the role of the
coherence statistic $\mu(\mtx{Q})$.
Using matrix concentration tools, Cohen~\cite[Thm.~4.2]{Coh16:Nearly-Tight-Oblivious}
obtained upper bounds for fixed-sparsity subspace embeddings:
\[
k \lesssim (d \log d) / \eps^2
\quad\text{and}\quad
\zeta \lesssim (\log d) / \eps.
\]
For a long time, Cohen's analysis was the best available, and
it has remained a vexing problem to obtain an upper bound that
matches the minimal dependence~\eqref{eqn:nelson-nguyen}.

In applications, constants and logarithms are important.
Based on computer experiments, Tropp et al.~\cite{TYUC19:Streaming-Low-Rank}
recommended the following parameter settings for fixed-sparsity subspace embeddings:
\[
k = 2 d
\quad\text{and}\quad
\zeta = 8.
\]
These parameters work well, but they lack theoretical justification. %
Regardless, fixed-sparsity subspace embeddings are widely used in
computational linear algebra~\cite{MT20:Randomized-Numerical,MDM+23:Randomized-Numerical}.

The last few years have witnessed a burst of new theoretical activity.
Cartis et al.~\cite{CFS21:Hashing-Embeddings} established upper bounds
that improve over the Bourgain et al.~result for subspaces with sufficiently
small coherence.
Chennakod et al.~\cite[Thm.~1.2]{CDDR24:Optimal-Embedding} removed the $\log d$
factor from the embedding dimension $k$ at the cost of higher sparsity $\zeta$
by invoking results of Brailovskaya \& van Handel~\cite{BvH24:Universality-Sharp}.
In the last few months, Chennakod et al.~\cite[Thm.~3.4]{CDD24:Optimal-Oblivious}
made further improvements to their arguments, reaching the following guarantee for fixed-sparsity
subspace embeddings:
\[
k \asymp d / \eps^2
\quad\text{and}\quad
\zeta \lesssim \log^2(d/\eps) / \eps + \log^3(d / \eps).
\]
The latter result is the state of the art.
While the embedding dimension $k$ has the optimal form,
the excess logarithmic factors in the sparsity $\zeta$
remain a serious limitation. %

As outlined, the prior work has focused on sparse subspace \hilite{embeddings}
(\cref{rem:subspace-embedding}), rather than sparse subspace \hilite{injections}
(\cref{def:subspace-inj}).
Nevertheless, the injection property is by far the more important feature,
both in theory and in practice; see~\cite[Sec.~7.1]{OT18:Universality-Laws} or~\cite[Sec.~8]{MT20:Randomized-Numerical}.
Our result (\cref{thm:sparse-injection}) is the first to prove that
sparse random matrices serve as subspace injections with (essentially)
the minimal dependence~\eqref{eqn:nelson-nguyen} on the subspace dimension $d$ in both the embedding dimension $k$
and the sparsity $\zeta$.  The plain role of the subspace coherence $\mu(\mtx{Q})$ is an added bonus.

\begin{remark}[Fixed-sparsity subspace injection]
We have treated the simplest model for a sparse subspace injection,
where the random matrix $\mtx{\Phi}$ has \hilite{iid entries}.
The Gaussian comparison theorem also allows us to study the injection
properties of a certain class of fixed-sparsity random matrices~\cite[Def.~3.2]{CDD24:Optimal-Oblivious}.
To obtain a $(1-\eps)$-subspace injection, it is sufficient that
\[
k \asymp d / \eps^2
\qquad\text{and}\quad
\zeta \lesssim (\log d) / \eps^2.
\]
As compared with \cref{thm:sparse-injection}, the subspace coherence $\mu(\mtx{Q})$
does not appear in this result.
As compared with Chennakod et al.~\cite{CDD24:Optimal-Oblivious},
we have reduced the dependence on $\log d$ significantly, at the
cost of a worse dependence on $\eps$.
\end{remark}

\subsubsection{Proof of \cref{thm:sparse-injection}}
\label{sec:sparse-inj-pf}

To apply the Gaussian comparison theorem, we need to construct
an appropriate comparison model.  We can accomplish this goal
in a few short steps, building on the calculations for the
sparse covariance matrix (\cref{thm:sparse-cov}).

As before, the second moment function of the random matrix
$\mtx{W} = \vct{\phi} \vct{\phi}^\transp \in \Sym_n(\R)$ takes the form
\begin{align*}
\Mo[ \mtx{W} ](\mtx{M})
	= \sum_{i \neq j}  (2 m_{ij}^2 + m_{ii} m_{jj}) + \frac{k}{\zeta} \sum_{i=1}^n m_{ii}^2
	\leq 2 \fnormsq{\mtx{M}} + (\trace \mtx{M})^2 + \frac{k}{\zeta} \sum_{i=1}^n m_{ii}^2.
\end{align*}
For the random matrix $\mtx{W}$, we construct a Gaussian comparison model $\mtx{X} \in \Sym_d(\R)$
with the statistics
\begin{align*}
\Expect[ \mtx{X} ] &= \Expect[ \mtx{W} ] = \Id_n; \\
\Varo[\mtx{X}](\mtx{M})
	&= 2 \fnormsq{\mtx{M}} + (\trace \mtx{M})^2 + \frac{k}{\zeta} \sum_{i=1}^n m_{ii}^2
	\geq \Mo[ \mtx{W} ](\mtx{M}).
\end{align*}
Therefore,
$\mtx{X} \sim \Id_n + \smash{\mtx{G}_{\goe}^{(n)}} + \gamma \Id_n + \sqrt{k/\zeta}\, \mtx{D}_n$.
As usual, the GOE matrix $\smash{\mtx{G}_{\goe}^{(n)}} \in \Sym_n(\R)$,
the standard normal variable $\gamma \sim \normal_{\R}(0,1)$,
and the standard normal diagonal matrix $\mtx{D}_n \in \Sym_n(\R)$ are independent.

Since first and second moments are equivariant (\cref{prop:mom-equi}),
the Gaussian random matrix $\mtx{Q}^\transp \mtx{X} \mtx{Q}$ serves as
a comparison model for $\mtx{Q}^\transp \mtx{W} \mtx{Q}$.  That is,
\[
\Expect[ \mtx{Q}^\transp \mtx{X} \mtx{Q} ] = \Expect[ \mtx{Q}^\transp \mtx{W} \mtx{Q} ]
\qquad\text{and}\qquad
\Varo[ \mtx{Q}^\transp \mtx{X} \mtx{Q} ] \geq \Mo[ \mtx{Q}^\transp \mtx{W} \mtx{Q} ].
\]
The matrix $\mtx{Q} \in \R^{n \times d}$ has orthonormal columns, so
\[
\mtx{Q}^\transp \mtx{X} \mtx{Q} \sim \Id_d + \mtx{G}_{\goe}^{(d)}
	+ \gamma \Id_d + \sqrt{k/\zeta} \, \mtx{Q}^\transp \mtx{D}_n \mtx{Q}. %
\]
We have exploited the fact~\eqref{eqn:goe-invar} that the GOE distribution is
invariant under rotations to confirm that the matrix $\smash{\mtx{G}_{\goe}^{(d)}} \in \Sym_d(\R)$
also follows the GOE distribution.

Next, we construct a comparison model $\mtx{Z}$ for the random matrix $\mtx{Y}$,
appearing in~\eqref{eqn:subspace-inj-psd}, which captures the subspace injection property.
Since $\mtx{Y} = k^{-1} \sum_{i=1}^k \mtx{Q}^\transp \mtx{W}_i \mtx{Q}$
for iid $\mtx{W}_i \sim \mtx{W}$, we quickly determine that
\begin{align*}
\mtx{Z} &\sim \mtx{Q}^\transp \big[ (\Expect \mtx{X}) + k^{-1/2} (\mtx{X} - \Expect \mtx{X}) \big] \mtx{Q} \\
	&\sim \Id_d + k^{-1/2} \mtx{G}_{\goe}^{(d)} + \gamma k^{-1/2} \Id_d
	+ \zeta^{-1/2} \mtx{Q}^\transp \mtx{D}_n \mtx{Q}. %
\end{align*}
To finish the proof, we need to compute spectral statistics.

To that end, express the compression of the diagonal matrix as a Gaussian series:
\[
\mtx{Q}^\transp \mtx{D}_n \mtx{Q} = \sum_{i=1}^n \gamma_i \vct{q}_i \vct{q}_i^\transp \in \Sym_d(\R)
\quad\text{where $\gamma_i \sim \normal_{\R}(0,1)$ iid.}
\]
The vector $\vct{q}_i^\transp \in \R^d$ is the $i$th \hilite{row} of $\mtx{Q}$.
The matrix variance satisfies
\[
\sigma^2( \mtx{Q}^\transp \mtx{D}_n \mtx{Q} )
	= \lnorm{ \sum_{i=1}^n (\vct{q}_i \vct{q}_i^\transp)^2 }
	= \lnorm{ \sum_{i=1}^n \normsq{ \vct{q}_i } \vct{q}_i \vct{q}_i^\transp}
	\leq \mu(\mtx{Q}) \cdot \norm{ \mtx{Q}^\transp \mtx{Q} }
	= \mu(\mtx{Q}).
\]
where $\mu(\mtx{Q})$ is the coherence~\eqref{eqn:coherence} of the subspace.
The matrix Khinchin inequality (\cref{fact:nck}) provides
\[
\Expect \lambda_{\max}(\mtx{Q}^\transp \mtx{D}_n \mtx{Q})
	\leq \sqrt{ 2 \mu(\mtx{Q}) \log d }.
\]
Owing to the comparison~\eqref{eqn:intro-var-wvar}, the weak variance $\sigma_*^2(\mtx{Q}^\transp \mtx{D}_n \mtx{Q}) \leq \sigma^2(\mtx{Q}^\transp \mtx{D}_n \mtx{Q}) \leq \mu(\mtx{Q})$.

Last, bound the expected minimum eigenvalue and weak variance of the comparison model:
\begin{align*}
\Expect \lambda_{\min}(\mtx{Z}) &\geq 1 - 2\sqrt{d/k} - \sqrt{2 \zeta^{-1} \mu(\mtx{Q}) \log d}; \\
\sigma_*^2( \mtx{Z} ) &\leq 3/k + \mu(\mtx{Q}) / \zeta.
\end{align*}
Activating \cref{thm:iid-intro},
\[
\Prob{ \lambda_{\min}(\mtx{Y}) \geq 1 - 2\sqrt{d/k} - \sqrt{2 \zeta^{-1} \mu(\mtx{Q}) \log d} - \eps/4 }
	\leq 2d \cdot \exp\left( \frac{-\eps^2 / 32}{3/k + \mu(\mtx{Q}) / \zeta} \right).
\]
Introduce the parameter choices~\eqref{eqn:subspace-inj-params}
and simplify to reach the stated result.
\qed

\section{Gaussian comparison: Positive scalar sums}
\label{sec:scalar-pf}

The technical development commences in this section.
As a warmup, we develop a proof of the lower tail bound
for an independent sum of positive scalar random variables.

\begin{theorem}[Positive sum: Gaussian lower tail] \label{thm:positive-sum}
Consider an \hilite{independent} family $(W_1, \dots, W_n)$ of
\hilite{nonnegative}, square-integrable, real random variables:
$W_i \geq 0$ and $\Expect W_i^2 < + \infty$.
Introduce the sum of the random variables, along with the sum of second moments:
\[
X \coloneqq \sum_{i=1}^n W_i
\quad\text{and}\quad
L_2 \coloneqq \sum_{i=1}^n \Expect W_i^2.
\]
Then the lower tail of the sum $X$ satisfies
\[
\Prob{ X \leq \Expect X - t } \leq \econst^{-t^2 / (2L_2)}
\quad\text{for all $t \geq 0$.}
\]
\end{theorem}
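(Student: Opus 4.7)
The plan is to follow the classical Chernoff route, with the scalar mgf comparison~\eqref{eqn:intro-scalar-mgf} as the heart of the argument. First I would establish the one-variable mgf bound: for any nonnegative, square-integrable random variable $W$ and any $\theta \geq 0$,
\[
\Expect[ \econst^{-\theta W} ] \leq \exp\!\left( -\theta \Expect[W] + \tfrac{1}{2} \theta^2 \Expect[W^2] \right).
\]
The key pointwise inequality is $\econst^{-x} \leq 1 - x + x^2/2$ for $x \geq 0$, which one verifies by checking that the difference vanishes at $x=0$ together with its derivative and has nonnegative second derivative on $[0,\infty)$. Applying this pointwise to $x = \theta W \geq 0$ and taking expectations gives
$\Expect[\econst^{-\theta W}] \leq 1 - \theta \Expect[W] + \tfrac{1}{2}\theta^2 \Expect[W^2]$, and then $1 + y \leq \econst^y$ upgrades this to the stated exponential bound.

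Next I would tensorize. Since $W_1, \dots, W_n$ are independent, the mgf of the sum factors, and the single-variable bound applied to each summand yields
\[
\Expect[ \econst^{-\theta X} ] = \prod_{i=1}^n \Expect[ \econst^{-\theta W_i}]
	\leq \exp\!\left( -\theta \Expect[X] + \tfrac{1}{2} \theta^2 L_2 \right)
	\quad\text{for all } \theta \geq 0.
\]
One recognizes the right-hand side as the mgf of a Gaussian random variable with mean $\Expect[X]$ and variance $L_2$, which is exactly the comparison described in~\eqref{eqn:intro-scalar-mgf}.

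Finally I would close the argument by a standard Chernoff/Markov step restricted to $\theta \geq 0$ (this is where nonnegativity of $W$ is crucial, since the comparison fails for the upper tail of $X$). For $t \geq 0$,
\[
\Prob{ X \leq \Expect[X] - t }
	\leq \econst^{\theta(\Expect[X] - t)} \Expect[\econst^{-\theta X}]
	\leq \exp\!\left( -\theta t + \tfrac{1}{2} \theta^2 L_2 \right),
\]
and then optimizing over $\theta \geq 0$ by choosing $\theta = t/L_2$ delivers the claimed bound $\econst^{-t^2/(2L_2)}$.

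There is no serious obstacle; the only step with any real content is the single-variable mgf comparison, which rests on the elementary inequality $\econst^{-x} \leq 1 - x + x^2/2$ valid only for $x \geq 0$. This is precisely the place where the hypothesis $W_i \geq 0$ is used in an essential way, and it is also what motivates the matrix analogue in later sections, where the corresponding pointwise inequality will be replaced by a much deeper fact from matrix analysis (Stahl's theorem).
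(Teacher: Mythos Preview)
Your proof is correct and is precisely the classical argument the paper alludes to (citing~\cite[Exer.~2.9]{BLM13:Concentration-Inequalities}). However, the paper deliberately takes a different route. Rather than the pointwise inequality $\econst^{-x} \leq 1 - x + x^2/2$, the paper bounds the derivative of the mgf by writing $\mgf_X'(\theta)$ as a sum of covariances $\Cov(W_i, f_i'(W_i))$ and controlling each one via \cref{lem:cov-cm}, which combines an exchangeable-pairs identity, the divided-difference bound~\eqref{eqn:hermite-cvx}, and the association inequality~\eqref{eqn:neg-assoc} for completely monotone functions. Solving the resulting differential inequality gives the same mgf bound you obtain.

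The payoff of the paper's longer argument lies in the matrix generalization. Your pointwise inequality does transfer to psd matrices (and the paper uses exactly this in \cref{lem:epz-cgf} of the appendix), but combined with matrix concentration it only yields the weaker bound~\eqref{eqn:mtx-bern-lb} involving the matrix variance $\sigma^2(\mtx{Z})$. The Stein-method route, by contrast, generalizes via Stahl's theorem (\cref{fact:bmv}) to produce the full trace-mgf comparison~\eqref{eqn:intro-matrix-mgf}, which is what ultimately delivers the sharper dependence on the weak variance $\sigma_*^2(\mtx{Z})$. So your closing remark slightly mischaracterizes the extension: Stahl's theorem does not replace the pointwise inequality in a transfer-rule argument; it replaces the complete monotonicity of $w \mapsto \econst^{-\theta w}$ by complete monotonicity of $w \mapsto \trace \econst^{\mtx{B} - w\mtx{A}}$, enabling the covariance machinery of \cref{lem:cov-cm} to run in the matrix setting.
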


\begin{proof}
Introduce the mgf of the \hilite{lower tail} of the \hilite{centered} sum:
\begin{equation} \label{eqn:scalar-mgf}
\mgf_X(\theta) \coloneqq \Expect \econst^{ - \theta (X - \Expect X)}
\quad\text{for $\theta \geq 0$.}
\end{equation}
\Cref{prop:scalar-mgf}, below, states that $\log \mgf_X(\theta) \leq \theta^2 L_2/2$ for all $\theta \geq 0$.
The Laplace transform method~\cite[Sec.~2.2]{BLM13:Concentration-Inequalities}
yields the tail bound
\[
\Prob{ X - \Expect X \leq -t } \leq \inf\nolimits_{\theta > 0} \econst^{-\theta t + \log \mgf_X(\theta)}
	\leq \inf\nolimits_{\theta > 0} \econst^{-\theta t + \theta^2 L_2 / 2}
	= \econst^{-t^2 / (2L_2)}.
\]
The infimum is attained when $\theta = t / L_2$.
\end{proof}

The proof of~\cref{thm:positive-sum} depends on a bound for
$\mgf_X$, defined in~\eqref{eqn:scalar-mgf}.
Although this bound is classic and rather elementary~\cite[Exer.~2.9]{BLM13:Concentration-Inequalities},
we will develop an alternative treatment that has more potential for generalization.
The results in this section will reappear in the proof of the matrix
comparison inequalities.

\subsection{Completely monotone functions}

Our approach takes advantage of a special feature of the
decaying exponential that is encapsulated in the next definition.

\begin{definition}[Completely monotone function] \label{def:cm}
A function $f : \set{I} \to \R$ on the interval $\set{I} \subseteq \R$
is \term{completely monotone to order $K$} when its first $K$ derivatives
exist and alternate sign:
\begin{equation} \label{eqn:cm}
(-1)^k f^{(k)}(w) \geq 0
\quad\text{for all $w \in \set{I}$ and each $k = 0, 1, 2, \dots, K$.}
\end{equation}
If the interval $\set{I}$ includes an endpoint, the derivatives at
the endpoint are interpreted as one-sided derivatives.  The function 
$f$ is \term{completely monotone} when~\eqref{eqn:cm} holds for each $K \in \N$.
\end{definition}

The fundamental example of a completely monotone function is a \hilite{decaying} exponential.
For fixed \hilite{$\theta \geq 0$} and variable $w \in \R$, the function
\[
w \mapsto \econst^{-\theta w}
\quad\text{is completely monotone for $w \in \R$.}
\]
In fact, a function $f : \R_{+} \to \R$ on the nonnegative real line
is completely monotone \hilite{if and only if} it is the Laplace
transform of a finite, positive Borel measure $\mu$:
\begin{equation} \label{eqn:bernstein}
f(w) = \int_{[0,\infty)} \econst^{-\theta w} \, \mu(\diff{\theta}).
\end{equation}
The representation~\eqref{eqn:bernstein} is called Bernstein's theorem~\cite[Thm.~IV.12a]{Wid41:Laplace-Transform}.

Completely monotone functions support a beautiful theory, elaborated
in the classic book of Widder~\cite[Chap.~IV]{Wid41:Laplace-Transform}.
The monograph of Schilling et al.~\cite{SSV12:Bernstein-Functions-2ed}
is a more recent reference.  This background is not required for our
purposes.

\subsection{Tools from Stein's method}

The main steps in the analysis are adapted from the literature
on Charles Stein's method, a collection of tools for establishing
distributional approximations and concentration inequalities.
This section outlines some ideas from Stein's method.
See the survey of Ross~\cite{Ros11:Fundamentals-Steins}
or the book of Chen et al.~\cite{CGS11:Normal-Approximation}
for more information.

To describe the variability in a distribution,
we can employ exchangeable pairs of random variables.
A pair $(W, Y)$ of real random variables is \term{exchangeable} when
$(W, Y) \sim (Y, W)$.  Equivalently,
\begin{equation} \label{eqn:exch}
\Expect F(W, Y) = \Expect F(Y, W).
\end{equation}
for every bivariate function $F : \R \times \R \to \R$ where
the expectation exists.  A pair of iid random variables
is the most basic example of an exchangeable pair.

The next ingredient is an elegant covariance identity.
Let $g, h : \set{I} \to \R$ be functions on the interval
$\set{I} \subseteq \R$.  For an \hilite{iid pair} $(W,Y)$
of random variables taking values in $\set{I}$,
\begin{equation} \label{eqn:cov-exch}
\Cov(g(W), h(W)) %
	= \frac{1}{2} \Expect\big[ (g(W) - g(Y))( h(W) - h(Y) ) \big].
\end{equation}
This formula can be verified by direct calculation.
It is valid whenever the expectations are finite.

To bound differences of function values, as in~\eqref{eqn:cov-exch},
we employ a formula of Hermite.  For a continuously
differentiable function $h : \set{I} \to \R$,
\begin{equation} \label{eqn:hermite}
\frac{h(w) - h(y)}{w-y} = \int_0^1 h'(\tau w + (1 - \tau) y) \idiff{\tau}
\quad\text{for all $w, y \in \set{I}$.}
\end{equation}
Identity~\eqref{eqn:hermite} follows from the fundamental theorem of calculus.
Moreover, \hilite{if $h'$ is convex on $\set{I}$}, then
\begin{equation} \label{eqn:hermite-cvx}
\frac{h(w) - h(y)}{w - y}
	\leq \frac{h'(w) + h'(y)}{2}
	\quad\text{for all $w, y \in \set{I}$.}
\end{equation}
When $w = y$ in~\eqref{eqn:hermite} or~\eqref{eqn:hermite-cvx},
we interpret the left-hand side as $h'(w)$.

The most important element in our proof of \cref{thm:positive-sum} 
is an association inequality
for functions with opposite sense~\cite[Thm.~2.14]{BLM13:Concentration-Inequalities}.
Assume that $g : \set{I} \to \R$ is \hilite{increasing}, while $h : \set{I} \to \R$ is \hilite{decreasing}.
For any random variable $W$ taking values in $\set{I}$,
\begin{equation} \label{eqn:neg-assoc}
\Expect[ g(W) h(W) ] \leq \Expect[ g(W) ] \cdot \Expect[ h(W) ].
\end{equation}
Equivalently, the covariance of $g(W)$ and $h(W)$ is \hilite{negative}.
To establish the result~\eqref{eqn:neg-assoc}, note that
\[
(g(s) - g(t))(h(s) - h(t)) \leq 0
\quad\text{for all $s, t \in \set{I}$.}
\]
Combine this formula with the covariance identity~\eqref{eqn:cov-exch}.

\subsection{Covariance bounds for completely monotone functions}

This section shows how the tools from Stein's method lead to clean
bounds for covariances involving a completely monotone function.
In particular, this argument applies to the mgf of the lower tail.

\begin{lemma}[Covariance bound: Completely monotone function] \label{lem:cov-cm}
Let $f : \R_+ \to \R$ be a function on the nonnegative real line
that is \hilite{completely monotone to order four}.
For each \hilite{nonnegative} real random variable $W$, %
\[
\Cov( W, f'(W) ) = \Expect\big[ (W - \Expect W) f'(W) \big]
	\leq \Expect[ W^2 ] \cdot \Expect[ f''(W) ].
\]
The bound is valid when the expectations are finite.
\end{lemma}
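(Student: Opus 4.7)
The plan is to combine all three Stein-style tools assembled in the preceding subsection: the i.i.d.\ covariance identity~\eqref{eqn:cov-exch}, the convex-derivative Hermite bound~\eqref{eqn:hermite-cvx}, and the negative-association inequality~\eqref{eqn:neg-assoc}. The fourth-order hypothesis is dictated precisely by what these tools demand from $f$: complete monotonicity to order four yields that $f'\le 0$ is increasing (from $f''\ge 0$), that $f''\ge 0$ is decreasing (from $f'''\le 0$), and crucially that $f''$ is convex (from $f''''\ge 0$).

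First, I introduce an independent copy $Y$ of $W$ and apply~\eqref{eqn:cov-exch} with $g(w)=w$ and $h(w)=f'(w)$ to rewrite
$$\Cov(W,f'(W)) = \tfrac{1}{2}\,\Expect\bigl[(W-Y)(f'(W)-f'(Y))\bigr].$$
Since $(f')' = f''$ is convex, the Hermite inequality~\eqref{eqn:hermite-cvx} applied with $h = f'$ gives the pointwise estimate
$$(w-y)(f'(w)-f'(y)) \le \tfrac{1}{2}(w-y)^2\bigl(f''(w)+f''(y)\bigr) \qquad \text{for all } w,y \ge 0,$$
which is valid without any sign restriction on $w-y$ because we multiplied the secant-slope bound by $(w-y)^2 \ge 0$. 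Taking expectations, expanding $(W-Y)^2 = W^2 - 2WY + Y^2$, and using that $W$ and $Y$ are i.i.d., the covariance collapses to the three-term bound
$$\Cov(W,f'(W)) \le \tfrac{1}{2}\Expect[W^2 f''(W)] \;-\; \Expect[W]\,\Expect[W f''(W)] \;+\; \tfrac{1}{2}\Expect[W^2]\,\Expect[f''(W)].$$

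Finally, I invoke the association inequality~\eqref{eqn:neg-assoc}: the map $w\mapsto w^2$ is increasing on $\R_+$ while $f''$ is decreasing, so $\Expect[W^2 f''(W)] \le \Expect[W^2]\,\Expect[f''(W)]$; meanwhile the cross-term $-\Expect[W]\,\Expect[W f''(W)]$ is nonpositive since $W\ge 0$ and $f''\ge 0$. Combining these two observations with the display above yields the claimed bound. The main point of care in the argument is matching each analytic fact about $f^{(k)}$ to the right tool: the Hermite-convex step is the reason one must spend the fourth derivative, and the association inequality is what tames the quadratic cross-term that Hermite alone cannot absorb.
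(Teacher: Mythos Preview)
Your proof is correct and follows essentially the same approach as the paper: exchangeable-pairs covariance identity, the convex-derivative Hermite bound (using $f''''\ge 0$), and the association inequality for $w^2$ against the decreasing $f''$. The only cosmetic difference is that the paper uses the crude pointwise bound $(w-y)^2\le w^2+y^2$ for $w,y\ge 0$ before taking expectations, whereas you expand $(W-Y)^2$ exactly and then discard the nonpositive cross term $-\Expect[W]\,\Expect[Wf''(W)]$; these are the same move.
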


\begin{proof}
According to \cref{def:cm}, the derivatives of a
completely monotone function $f$ alternate sign.
We only rely on the conditions that $f''$ is positive, decreasing, and convex,
so complete monotonicity to order four is sufficient.

Let $Y$ be an independent copy of $W$.  By the exchangeable
pairs formula~\eqref{eqn:cov-exch},
\begin{align*}
\Cov(W, f'(W)) &= \frac{1}{2} \Expect\big[ (W - Y)( f'(W) - f'(Y) ) \big] \\
	&\leq \frac{1}{4} \Expect \big[ (W - Y)^2 ( f''(W) + f''(Y) ) \big]
	= \frac{1}{2} \Expect \big[ (W - Y)^2 f''(W) \big].
\end{align*}
The inequality is the bound~\eqref{eqn:hermite-cvx} for the
divided difference of the function $f'$, whose derivative $f''$ is convex.
In the last step, we have used the exchangeability~\eqref{eqn:exch}
of $(W, Y)$ to simplify the expression.

To continue, let us separate the two factors in the expectation.
Note that $(w - y)^2 \leq w^2 + y^2$ for $w, y \geq 0$.
Since $f''$ is positive,
\[
\Cov(W, f'(W)) \leq \frac{1}{2} \Expect\big[ (W^2 + Y^2) f''(W) \big]
	\leq \frac{1}{2} \Expect[ W^2 f''(W) ] + \frac{1}{2} \Expect[ W^2 ] \cdot \Expect[ f''(W) ].
\]
The second step relies on the independence and identical distribution of $(W,Y)$.
On the nonnegative real line $\R_+$, the function $w \mapsto w^2$ is increasing,
while $f''$ is decreasing.  Thus, the association inequality~\eqref{eqn:neg-assoc}
furnishes a bound for the first expectation on the right-hand side:
\[
\Expect[ W^2 f''(W) ] \leq \Expect[ W^2 ] \cdot \Expect[ f''(W) ].
\]
Combine the last two displays to arrive at the advertised result.
\end{proof}

Lemma~\ref{lem:cov-cm} depends crucially on the assumption that $f'''$ is negative,
which allows us to invoke the association inequality~\eqref{eqn:neg-assoc}
to decouple $W^2$ from $f''(W)$.  The entropy method
employs association inequalities in a similar fashion;
for instance, see~\cite[Thm.~6.27]{BLM13:Concentration-Inequalities}.
The overall structure of the proof is similar with classic
arguments from Stein's method, exemplified in~\cite[Thm.~1.5(ii)]{Cha07:Steins-Method}.

\subsection{Positive sum: mgf bound}

With~\cref{lem:cov-cm} at hand, we quickly obtain a bound for
the mgf of a sum of independent, nonnegative random variables.

\begin{proposition}[Positive sum: mgf bound] \label{prop:scalar-mgf}
Instate the hypotheses of \cref{thm:positive-sum}.
The mgf~\eqref{eqn:scalar-mgf} satisfies the bound
\[
\log \mgf_X(\theta) \leq %
\theta^2 L_2/2
\quad\text{for all $\theta \geq 0$.}
\]
\end{proposition}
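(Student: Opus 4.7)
The plan is to leverage independence to reduce the mgf bound to a per-summand differential inequality, and then derive that inequality from Lemma~\ref{lem:cov-cm} applied to the decaying exponential.

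First I would factor the mgf using independence. Since the $W_i$ are independent,
\[
\log \mgf_X(\theta) = \theta \Expect X + \sum_{i=1}^n \log \Expect \econst^{-\theta W_i} = \sum_{i=1}^n \phi_i(\theta),
\quad\text{where}\quad
\phi_i(\theta) \coloneqq \theta \Expect W_i + \log \Expect \econst^{-\theta W_i}.
\]
Each $\phi_i$ is the log-mgf of the lower tail of the centered summand $W_i - \Expect W_i$, so $\phi_i(0) = 0$. Consequently, to obtain the target bound $\log\mgf_X(\theta) \leq \theta^2 L_2/2$, it suffices to show that $\phi_i'(\theta) \leq \theta \cdot \Expect W_i^2$ for each $\theta \geq 0$, and then integrate from $0$ to $\theta$ and sum over $i$.

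The derivative $\phi_i'(\theta)$ can be rewritten as a covariance. Indeed, setting $f_\theta(w) \coloneqq \econst^{-\theta w}$ (which is completely monotone on $\R_+$ to every order for $\theta \geq 0$, hence certainly to order four), direct differentiation gives
\[
\phi_i'(\theta) \cdot \Expect f_\theta(W_i)
    = \Expect W_i \cdot \Expect f_\theta(W_i) - \Expect[ W_i f_\theta(W_i) ]
    = -\Cov(W_i,\, f_\theta(W_i))
    = \theta^{-1} \Cov(W_i,\, f_\theta'(W_i)),
\]
using $f_\theta'(w) = -\theta f_\theta(w)$. Now apply Lemma~\ref{lem:cov-cm} with $f = f_\theta$ and the nonnegative random variable $W_i$: the right-hand covariance is bounded by $\Expect W_i^2 \cdot \Expect f_\theta''(W_i) = \theta^2 \Expect W_i^2 \cdot \Expect f_\theta(W_i)$. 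Dividing through by the positive factor $\Expect f_\theta(W_i)$ yields the desired differential inequality $\phi_i'(\theta) \leq \theta \Expect W_i^2$, and the rest is integration.

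The only real subtlety is confirming that the signs and factors of $\theta$ line up correctly when translating between $\Cov(W_i,\econst^{-\theta W_i})$ and $\Cov(W_i, f_\theta'(W_i))$; once that bookkeeping is in place, the bound follows mechanically. Observe also that trying to apply Lemma~\ref{lem:cov-cm} \emph{directly} to the sum $X$ would produce the weaker quantity $\Expect X^2$ rather than $L_2 = \sum_i \Expect W_i^2$, so passing to the product form via independence before invoking the lemma is essential.
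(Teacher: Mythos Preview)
Your proof is correct and uses the same core idea as the paper: derive a differential inequality for $\log\mgf_X$ by applying Lemma~\ref{lem:cov-cm} to each summand $W_i$ with the decaying exponential, then integrate. The only structural difference is that you first \emph{factor} $\mgf_X(\theta)=\prod_i \Expect\econst^{-\theta(W_i-\Expect W_i)}$ via independence and then work summand-by-summand, whereas the paper differentiates $\mgf_X$ directly, isolates $W_i$ inside the full exponential by writing $\econst^{-\theta(X-\Expect X)}=\econst^{\theta B_i-\theta W_i}$ with $B_i$ independent of $W_i$, and applies the lemma conditionally on $B_i$. Your route is slightly cleaner in the scalar case, but the paper's conditioning structure is deliberately chosen because it is the one that survives in the matrix setting (\cref{prop:psd-weight-mgf}), where the trace exponential does \emph{not} factor over independent summands and one must separate $W_i$ from the rest of the exponent by conditioning rather than by a product decomposition.
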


We can rewrite the outcome of \cref{prop:scalar-mgf} in a more suggestive fashion:
\begin{equation} \label{eqn:scalar-mgf-normal}
\Expect \econst^{-\theta X}
	\leq \Expect \econst^{-\theta Z}
	\quad\text{where $Z \sim \normal(\Expect X, L_2)$.}
\end{equation}
In other words, we have compared the lower tail of the
sum $X$ with the lower tail of an appropriate normal random variable $Z$
whose statistics derive from the summands in $X$.  In \cref{sec:psd-weights},
we will see that the inequality~\eqref{eqn:scalar-mgf-normal}
generalizes to matrices.

\begin{proof}
Recall the definition~\eqref{eqn:scalar-mgf} of $\mgf_X(\theta)$.
Since $\log \mgf_{X}(0) = 0$, the result holds for $\theta = 0$,
and we may assume that $\theta > 0$.
The derivative of the mgf takes the form
\begin{align*}
\mgf'_X(\theta) &= - \Expect\big[ (X - \Expect X) \econst^{-\theta(X - \Expect X)} \big] \\
	&= - \sum_{i=1}^n \Expect\big[ (W_i - \Expect W_i) \econst^{-\theta(X - \Expect X)} \big]
	\eqqcolon - \sum_{i=1}^n \Expect \big[ (W_i - \Expect W_i) \econst^{\theta B_i - \theta W_i} \big].
\end{align*}
The second relation expands the sum $X - \Expect X = \sum_{i=1}^n (W_i - \Expect W_i)$.
Note that the random variable $B_i \coloneqq W_i - (X - \Expect X)$ is statistically independent
from $W_i$.

Conditional on $B_i$, apply~\cref{lem:cov-cm} to the completely monotone function
$f_i(w) \coloneqq \econst^{\theta B_i - \theta w}$.  This step results in the bound
\begin{align*}
\mgf'_X(\theta) &\leq \theta \sum_{i=1}^n \Expect[ W_i^2 ] \cdot \Expect[ \econst^{\theta B_i - \theta W_i} ] \\
	&= \theta \sum_{i=1}^n \Expect[ W_i^2 ] \cdot \Expect[ \econst^{-\theta(X - \Expect X)} ]
	= \theta L_2 \cdot \mgf_X(\theta).
\end{align*}
Solve the differential inequality to complete the proof.
\end{proof}

\section{Gaussian comparison: Randomly weighted sums of psd matrices}
\label{sec:psd-weights}

In this section, we turn to the proof of \cref{sec:sampling-intro},
which provides a bound for the minimum eigenvalue of a randomly
weighted sum of fixed psd matrices.
For technical reasons, we develop the result using slightly different
notation and assumptions.

Fix a system $(\mtx{A}_1, \dots, \mtx{A}_n)$ of \hilite{psd} matrices,
with common dimension $d$. %
These matrices need not be distinct from each other.
Consider an \hilite{independent} family $(W_1, \dots, W_n)$ of \hilite{square-integrable, nonnegative}
real random variables: $W_i \geq 0$ and $\Expect W_i^2 < + \infty$.
Form the random psd matrix
\begin{equation} \label{eqn:psd-weight}
\mtx{X} \coloneqq \sum_{i=1}^n W_i \mtx{A}_i.
\end{equation}
We will compare the random psd matrix $\mtx{X}$ with an appropriate Gaussian model.
Define the deterministic self-adjoint matrices
\begin{equation} \label{eqn:psd-weight-system}
\mtx{H}_i \coloneqq (\Expect W_i^2 )^{1/2} \mtx{A}_i
\quad\text{for $i = 1, \dots, n$.}
\end{equation}
Construct the \hilite{centered} Gaussian matrix
\begin{equation} \label{eqn:psd-weight-gauss}
\mtx{Z} \coloneqq \sum_{i=1}^n \gamma_i \mtx{H}_i
\quad\text{where \quad $\gamma_i \sim \normal_{\R}(0,1)$ iid.}
\end{equation}
Equivalently, $\mtx{Z} \sim \normal(\mtx{0}, \mathsf{V})$ with variance function
\begin{equation} \label{eqn:psd-weight-covar}
\mathsf{V}(\mtx{M}) \coloneqq \sum_{i=1}^n \abssq{\ip{\mtx{M}}{\mtx{H}_i}}
	= \sum_{i=1}^n (\Expect W_i^2 ) \cdot \abssq{\ip{\mtx{M}}{\mtx{A}_i}}
	\quad\text{for $\mtx{M} \in \Sym_d$.}
\end{equation}
With these definitions, we can state a comparison inequality.

\begin{theorem}[Gaussian comparison: Weighted psd sum] \label{thm:psd-weights}
Fix an arbitrary self-adjoint matrix $\mtx{\Delta} \in \Sym_d$.
Introduce the random $d$-dimensional matrices $\mtx{X}$ and $\mtx{Z}$
from~\eqref{eqn:psd-weight} and \eqref{eqn:psd-weight-gauss}.
Then
\begin{equation} \label{eqn:psd-weight-expect}
\Expect \lambda_{\min}(\mtx{X} - \Expect \mtx{X} + \mtx{\Delta})
	\geq \Expect \lambda_{\min}(\mtx{Z} + \mtx{\Delta}) - \sqrt{2 \smash{\sigma_*^2}(\mtx{Z}) \log d}.
\end{equation}
In addition, for all $t \geq 0$,
\begin{equation} \label{eqn:psd-weight-prob}
\Prob{ \lambda_{\min}(\mtx{X} - \Expect \mtx{X} + \mtx{\Delta}) \leq \Expect \lambda_{\min}(\mtx{Z} + \mtx{\Delta}) - t }
	\leq d \cdot \econst^{-t^2/(2 \sigma_*^2(\mtx{Z}))}.
\end{equation}
The weak variance $\sigma_*^2(\mtx{Z})$ is defined in~\eqref{eqn:weak-var}.
\end{theorem}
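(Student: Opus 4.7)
The plan is to reduce both conclusions to the single trace-exponential comparison
\[
\Expect \trace \exp\bigl(-\theta (\mtx{X} - \Expect \mtx{X} + \mtx{\Delta})\bigr) \;\leq\; \Expect \trace \exp\bigl(-\theta (\mtx{Z} + \mtx{\Delta})\bigr) \qquad \text{for every } \theta \geq 0,
\]
and then to extract~\eqref{eqn:psd-weight-expect} and~\eqref{eqn:psd-weight-prob} by combining the matrix Laplace transform method with the Gaussian concentration bound~\cref{fact:gauss-lip}. Once this MGF comparison is in hand the conversion is routine: for the tail, chain Markov's inequality with the elementary bounds $\econst^{-\theta \lambda_{\min}(\mtx{M})} \leq \trace \econst^{-\theta \mtx{M}} \leq d \cdot \econst^{-\theta \lambda_{\min}(\mtx{M})}$ and the fact that $\lambda_{\min}$ is $1$-Lipschitz on $\Sym_d$ (Weyl) to obtain $\Prob{\lambda_{\min}(\mtx{X} - \Expect\mtx{X} + \mtx{\Delta}) \leq \Expect \lambda_{\min}(\mtx{Z}+\mtx{\Delta}) - t} \leq d \cdot \econst^{-\theta t + \theta^2 \sigma_*^2(\mtx{Z})/2}$, then optimize $\theta = t/\sigma_*^2(\mtx{Z})$; for the expectation, apply Jensen's inequality $\econst^{-\theta \Expect \lambda_{\min}(\cdot)} \leq \Expect \econst^{-\theta \lambda_{\min}(\cdot)}$ on the left-hand side of the same chain and optimize $\theta = \sqrt{2 \log d / \sigma_*^2(\mtx{Z})}$.

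To prove the MGF comparison itself, I would run a one-at-a-time Lindeberg swap. Introduce independent Gaussians $Y_k \sim \normal_{\R}(\Expect W_k, \Expect W_k^2)$ and define the hybrid
\[
\mtx{X}^{(k)} \;:=\; \sum_{j=1}^k Y_j \mtx{A}_j + \sum_{j=k+1}^n W_j \mtx{A}_j + \mtx{\Delta} - \Expect \mtx{X};
\]
then $\mtx{X}^{(0)} = \mtx{X} - \Expect\mtx{X} + \mtx{\Delta}$, while $\mtx{X}^{(n)}$ has the same distribution as $\mtx{Z} + \mtx{\Delta}$ by the choice~\eqref{eqn:psd-weight-covar} of variance function. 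For the swap at step $k$, condition on all other randomness so the matrix takes the form $w\mtx{A}_k + \mtx{K}$ with $\mtx{K}$ self-adjoint and independent of $(W_k, Y_k)$. Stahl's theorem, applied with psd coefficient $\theta \mtx{A}_k$ and Hermitian piece $-\theta \mtx{K}$, furnishes a positive Borel measure $\mu$ on $[0,\infty)$ for which $\trace \exp(-\theta(w\mtx{A}_k + \mtx{K})) = \int_0^\infty \econst^{-sw}\,\diff \mu(s)$ whenever $w \geq 0$. The scalar MGF bound $\Expect \econst^{-s W_k} \leq \Expect \econst^{-s Y_k}$ (which is \cref{prop:scalar-mgf} applied to the single variable $W_k$ and holds for every $s \geq 0$), integrated against the positive measure $\diff\mu$, then delivers the conditional swap inequality—\emph{provided} we can identify the resulting Gaussian integral with $\Expect_{Y_k} \trace \exp(-\theta(Y_k \mtx{A}_k + \mtx{K}))$.

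The main obstacle, and precisely where the psd hypothesis on $\mtx{A}_k$ earns its keep, is this last identification: because the Gaussian $Y_k$ can be negative, the Laplace representation must be extended from $[0,\infty)$ to all of $\R$. I would handle it by noting that $w \mapsto \trace \exp(-\theta(w\mtx{A}_k + \mtx{K}))$ is entire on $\C$ and, by Bendixson's inequality for the eigenvalues of a Hermitian-plus-anti-Hermitian sum, grows at most like $\econst^{\theta \norm{\mtx{A}_k} \cdot |w|}$ on the real axis; a Paley--Wiener-type argument then forces $\mu$ to be compactly supported in $[0, \theta \norm{\mtx{A}_k}]$, so the integral $\int \econst^{-sw}\,\diff \mu(s)$ defines an entire function of $w$ that agrees with $\trace \exp(-\theta(w\mtx{A}_k + \mtx{K}))$ on $[0,\infty)$ and hence everywhere. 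After the swap inequality is established conditionally, taking the outer expectation over $\mtx{K}$ yields $\Expect \trace \econst^{-\theta \mtx{X}^{(k-1)}} \leq \Expect \trace \econst^{-\theta \mtx{X}^{(k)}}$, and iterating through $k=1,\dots,n$ completes the MGF comparison.
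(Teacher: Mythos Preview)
Your proposal is correct, and the route to the trace-mgf comparison is genuinely different from the paper's. Both arguments rest on Stahl's theorem and both land on the same inequality $\Expect\trace\econst^{-\theta(\mtx{X}-\Expect\mtx{X}+\mtx{\Delta})}\le\Expect\trace\econst^{-\theta(\mtx{Z}+\mtx{\Delta})}$; the conversion to~\eqref{eqn:psd-weight-expect} and~\eqref{eqn:psd-weight-prob} via the matrix Laplace transform and \cref{fact:gauss-lip} is then identical. But the paper does not run a discrete Lindeberg swap. Instead it interpolates continuously, $\mtx{Y}_s=\sqrt{s}\,(\mtx{X}-\Expect\mtx{X})+\sqrt{1-s}\,\mtx{Z}+\mtx{\Delta}$, and shows that $u(s)=\Expect\trace\econst^{-\theta\mtx{Y}_s}$ is nonincreasing. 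The derivative $u'(s)$ splits into a Gaussian term, dispatched by Gaussian integration by parts, and a ``psd'' term, dispatched by a Stein's-method covariance bound (\cref{lem:cov-cm}): for $f$ completely monotone to order four and $W\ge0$, one has $\Cov(W,f'(W))\le\Expect[W^2]\cdot\Expect[f''(W)]$. Stahl enters only to certify that $w\mapsto\trace\econst^{\mtx{B}_i-\sqrt{s}\,w\mtx{A}_i}$ is completely monotone, so that this lemma applies. Your approach instead exploits Stahl through the Bernstein integral representation and reduces each swap to the scalar bound $\Expect\econst^{-sW_k}\le\Expect\econst^{-sY_k}$; the Paley--Wiener step is the one nontrivial extra ingredient, and it goes through because $w\mapsto\trace\exp(-\theta(w\mtx{A}_k+\mtx{K}))$ is entire of exponential type $\theta\norm{\mtx{A}_k}$ on all of $\C$ (via $\norm{\econst^{\mtx{M}}}\le\econst^{\norm{\mtx{M}}}$, which is simpler than the Bendixson appeal and gives the bound you actually need off the real axis). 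One thing the paper's framework buys: since \cref{lem:cov-cm} requires only complete monotonicity to order four, the same interpolation argument, fed Hein\"avaara's extension (\cref{fact:heina}), immediately yields the polynomial-moment comparison $\Expect\trace(\mtx{X}-\Expect\mtx{X}+\mtx{\Delta})_-^p\le\Expect\trace(\mtx{Z}+\mtx{\Delta})_-^p$ for $p\ge4$; your Bernstein-representation route does not directly extend there, because $(w)_-^p$ is not the Laplace transform of a positive measure.
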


\Cref{thm:psd-weights} generalizes \cref{thm:sampling-intro} from the introduction
by allowing a shift $\mtx{\Delta}$ of the expectation.
This improvement comes for free and allows for a more transparent
and natural argument.

\begin{proof}[Proof of \cref{thm:psd-weights}]

\Cref{prop:psd-weight-mgf}, below, provides a comparison for the trace mgfs:
\[
\mgf_{\mtx{X}}(\theta)
	\coloneqq \Expect \trace \econst^{-\theta (\mtx{X} - \Expect \mtx{X} + \mtx{\Delta})}
	\leq \Expect \trace \econst^{- \theta (\mtx{Z} + \mtx{\Delta})}
\quad\text{for $\theta \geq 0$.}
\]
Bound the trace exponential on the right-hand side in terms of the minimum eigenvalue:
\[
\mgf_{\mtx{X}}(\theta)	\leq d \cdot \Expect \lambda_{\max}\big( \econst^{- \theta (\mtx{Z} + \mtx{\Delta})} \big)
	= d \cdot \Expect \econst^{-\theta \lambda_{\min}(\mtx{Z} + \mtx{\Delta})}.
\]
Indeed, the trace of a psd matrix %
does not exceed the dimension times the maximum eigenvalue.
The second relation is the spectral mapping theorem, combined with the fact that
the decreasing exponential function reverses the order of the eigenvalues.

To continue, add and subtract $\Expect \lambda_{\min}(\mtx{Z} + \mtx{\Delta})$ in
the exponent:
\begin{align*}
\mgf_{\mtx{X}}(\theta) &\leq d \cdot \econst^{-\theta \Expect \lambda_{\min}(\mtx{Z} + \mtx{\Delta})}
	\cdot \Expect \econst^{-\theta (\lambda_{\min}(\mtx{Z} + \mtx{\Delta}) - \Expect \lambda_{\min}(\mtx{Z} + \mtx{\Delta}))}.
\end{align*}
The Gaussian concentration inequality~\eqref{eqn:gauss-lip-mgf}
controls the fluctuations of the minimum eigenvalue $\lambda_{\min}(\mtx{Z} + \mtx{\Delta})$
around its mean:
\begin{equation} \label{eqn:psd-weight-mgf-pf}
\mgf_{\mtx{X}}(\theta) \leq d \cdot \econst^{-\theta \Expect \lambda_{\min}(\mtx{Z} + \mtx{\Delta})}
	\cdot \econst^{\theta^2 \sigma_*^2(\mtx{Z}) / 2}.
\end{equation}
The inequality~\eqref{eqn:psd-weight-mgf-pf} quickly leads to both the stated results.

To obtain the probability bound~\eqref{eqn:psd-weight-prob},
we employ the matrix Laplace transform method~\cite[Prop.~3.2.1]{Tro15:Introduction-Matrix}.
For fixed $t \geq 0$ and arbitrary $\theta > 0$,
\begin{align*}
\Prob{ \lambda_{\min}(\mtx{X} - \Expect \mtx{X} + \mtx{\Delta}) \leq \Expect \lambda_{\min}(\mtx{Z} + \mtx{\Delta}) - t }
	&\leq \econst^{\theta (\Expect \lambda_{\min}(\mtx{Z} + \mtx{\Delta}) - t)}
		\cdot \mgf_{\mtx{X}}(\theta) \\
	&\leq d \cdot \econst^{-\theta t} \cdot \econst^{\theta^2 \sigma_*^2(\mtx{Z}) / 2}.
\end{align*}
The second inequality is~\eqref{eqn:psd-weight-mgf-pf}.
Select $\theta = t / \sigma_*^2(\mtx{Z})$ to reach the probability inequality.

The result~\eqref{eqn:psd-weight-expect} for the expectation follows from a similar argument.
For arbitrary $\theta > 0$, the matrix Laplace transform
method~\cite[Prop.~3.2.2]{Tro15:Introduction-Matrix}
yields
\begin{align*}
\Expect \lambda_{\min}(\mtx{X} - \Expect \mtx{X} - \mtx{\Delta})
	&\geq -\frac{1}{\theta} \log \mgf_{\mtx{X}}(\theta) \\
	&\geq - \frac{1}{\theta} \left[ \log d - \theta \Expect \lambda_{\min}(\mtx{Z} + \mtx{\Delta}) + \theta^2 \sigma_*^2(\mtx{Z}) / 2 \right].
\end{align*}
The second inequality is~\eqref{eqn:psd-weight-mgf-pf}.
Choose $\theta = \sqrt{ (2 \log d) / \sigma_*^2(\mtx{Z}) }$ to reach the expectation bound.
\end{proof}

\begin{proof}[Proof of \cref{thm:sampling-intro} from \cref{thm:psd-weights}]
Choose the shift $\mtx{\Delta} = \Expect \mtx{X}$,
and define the Gaussian matrix $\mtx{Z}' \coloneqq \mtx{Z} + \Expect \mtx{X} \sim \normal(\Expect \mtx{X}, \mathsf{V})$,
where $\mathsf{V}$ is defined in~\eqref{eqn:psd-weight-covar}.
Change variables to state \cref{thm:sampling-intro} directly in terms of
the distribution of $\mtx{Z}'$.
\end{proof}

\subsection{Stahl's theorem: Complete monotonicity of the trace exponential}
\label{sec:stahl}

To prove \cref{thm:psd-weights}, we extend the considerations behind
the scalar comparison (\cref{thm:positive-sum}) to the matrix setting.
This strategy depends on a profound fact from matrix analysis,
called Stahl's theorem~\cite{Sta13:Proof-BMV}.

\begin{fact}[Stahl's theorem; formerly the BMV conjecture] \label{fact:bmv}
Fix \hilite{self-adjoint} matrices $\mtx{A}, \mtx{B} \in \Sym_d$, and assume
that \hilite{$\mtx{A}$ is psd}.  Then the trace exponential function
\[
f(w) \coloneqq \trace \exp( \mtx{B} - w \mtx{A} )
\quad\text{is completely monotone for $w \geq 0$.}
\]
In particular, \cref{lem:cov-cm} applies to the function $f$.
\end{fact}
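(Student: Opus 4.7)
The plan is to recognize that \cref{fact:bmv} is Stahl's theorem~\cite{Sta13:Proof-BMV}, the resolution of the long-standing BMV conjecture; any self-contained proof would require the complete machinery of Stahl's paper, and the only sensible approach here is to cite that result. Let me nevertheless outline why the statement is nontrivial and describe the architecture of Stahl's argument.

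By Bernstein's theorem~\eqref{eqn:bernstein}, complete monotonicity of $f$ on $[0,\infty)$ is equivalent to a Laplace transform representation
\[
f(w) = \trace e^{\mtx{B} - w\mtx{A}} = \int_{[0,\infty)} e^{-\theta w}\,\mu(d\theta)
\]
for a positive finite Borel measure $\mu$. When $\mtx{A}$ and $\mtx{B}$ commute, simultaneous diagonalization gives $f(w) = \sum_{j=1}^d e^{\beta_j - w\alpha_j}$ with $\alpha_j \geq 0$, so $\mu$ is a sum of $d$ point masses; the difficulty is intrinsically noncommutative. A tempting elementary alternative is to differentiate under the trace via the Duhamel formula and check $(-1)^k f^{(k)}(w) \geq 0$ term by term. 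This does succeed through order $k \leq 3$: substituting $\mtx{A} = \mtx{A}^{1/2}\mtx{A}^{1/2}$ and using cyclicity of the trace reduces the integrand to $\trace(\mtx{Q}_1 \cdots \mtx{Q}_k)$ with $\mtx{Q}_i = \mtx{A}^{1/2}e^{s_i(\mtx{B}-w\mtx{A})}\mtx{A}^{1/2}$ positive semidefinite, and traces of products of three or fewer psd matrices are classically nonnegative. The manipulation breaks down at $k = 4$, since traces of products of four psd matrices can take either sign, and already recovering positivity of $f^{(4)}$ is essentially equivalent to BMV.

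Stahl's strategy is analytic continuation. A first reduction by continuity and weak compactness (using the uniform mass bound $\mu([0,\infty)) = f(0) = \trace e^{\mtx{B}}$) lets us assume $\mtx{A} \succ 0$. Then the eigenvalues $\lambda_1(w), \dots, \lambda_d(w)$ of $\mtx{B} - w\mtx{A}$ are algebraic functions of $w$, with branch points where eigenvalues collide, and they live naturally on a compact Riemann surface $\coll{R}$ over the $w$-sphere. The trace exponential lifts to a symmetric function of the sheets of $\coll{R}$, and a Stieltjes-type inversion formula recovers $\mu$ from the boundary jump of $f$ across a cut along $(-\infty, 0]$, provided this jump is a \emph{positive} measure.

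The main obstacle is precisely this positivity, which is where decades of pre-Stahl attempts had foundered. Stahl's breakthrough uses potential theory and subharmonic analysis on $\coll{R}$: by carefully tracking the monodromy of the eigenvalue functions under analytic continuation and exploiting their large-$|w|$ asymptotics (namely $\lambda_j(w) \sim -w\alpha_j$ for $\alpha_j$ an eigenvalue of $\mtx{A}$), he shows that the signed boundary measure is in fact nonnegative. This is a genuinely complex-analytic argument, with no known algebraic or operator-theoretic substitute; in the present paper I would simply invoke it as a black box.
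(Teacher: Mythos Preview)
Your proposal is correct and matches the paper's approach: \cref{fact:bmv} is stated as a \emph{fact} with citation to Stahl~\cite{Sta13:Proof-BMV}, not proved. The paper follows the statement with historical discussion (the BMV conjecture, the Lieb--Seiringer reformulation, Stahl's complex-analytic proof via Riemann surfaces, and Hein{\"a}vaara's generalization) rather than a proof sketch; your outline of why the Duhamel approach fails at order four and the architecture of Stahl's argument is additional expository detail, but the core treatment---cite, don't prove---is the same.
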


While Stahl's theorem is recent, it has a remarkable history.
In 1975, Bessis--Moussa--Villani~\cite{BMV75:Monotonic-Converging}
conjectured \cref{fact:bmv} as part of their method for bounding
partition functions of quantum mechanical systems.  They established the
result in two special cases: (1) when $\mtx{A}, \mtx{B}$ commute;
or (2) when $\mtx{A}, \mtx{B}$ are $2 \times 2$ matrices.
Over the next decades, the BMV conjecture received significant
attention in mathematical physics, but it was not resolved.

In 2004, Lieb \& Seiringer~\cite{LS04:Equivalent-Forms}
proved that \cref{fact:bmv} admits an equivalent formulation:
For all \hilite{psd} $\mtx{A}, \mtx{B} \in \Sym_d$,
the coefficients of the polynomial
$w \mapsto \trace{} (w \mtx{A} + \mtx{B})^p$ are
nonnegative for all $p \in \N$.
This link with real algebraic geometry ignited a new stage
of research, based on sum-of-squares hierarchies and
semidefinite programming, that generated new evidence
supporting the BMV conjecture.

Finally, in 2012, Herbert Stahl~\cite{Sta13:Proof-BMV}
established \cref{fact:bmv} using classic methods from
complex analysis.  Bernstein's theorem~\eqref{eqn:bernstein}
states that a completely monotone function is the Laplace
transform of a finite, positive Borel measure.  Roughly speaking, Stahl
inverted the Laplace transform to obtain the representing
measure.  To prove that the representing measure is positive,
he exploited the theory of Riemann surfaces.
See~\cite{Ere15:Herbert-Stahls} for another account of
Stahl's proof.

Very recently, Otte Hein{\"a}vaara constructed a rather different
argument~\cite[Thm.~2]{Hei24:Tracial-Joint} that leads
to a remarkable generalization of \cref{fact:bmv}:

\begin{fact}[Hein{\"a}vaara's theorem] \label{fact:heina}
Fix \hilite{self-adjoint} matrices $\mtx{A}, \mtx{B} \in \Sym_d$,
and assume that \hilite{$\mtx{A}$ is psd}.  Consider a function
$h : \R \to \R$ that is completely monotone to order $K$.
Then the trace function
\[
f(w) \coloneqq \trace h(w \mtx{A} - \mtx{B})
\quad\text{is completely monotone to order $K$ for $w \in \R$.}
\]
Stahl's theorem (\cref{fact:bmv}) follows from the choice $h(w) = \econst^{-w}$.
\end{fact}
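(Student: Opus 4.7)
The plan is to construct a non-negative Borel measure $\nu$ on $\R_+ \times \R$ — a \emph{tracial joint spectral measure} for the pair $(\mtx{A}, \mtx{B})$ — with the representation property
\begin{equation*}
\trace h(w\mtx{A} - \mtx{B}) = \int_{\R_+ \times \R} h(w\alpha - \beta) \, \diff\nu(\alpha, \beta)
\end{equation*}
for a sufficiently rich class of functions $h$. The first coordinate ranges over $\R_+$ because $\mtx{A}$ is psd and so $\spec \mtx{A} \subseteq \R_+$. Once such a $\nu$ is in hand, differentiating under the integral delivers the theorem in one stroke:
\begin{equation*}
f^{(k)}(w) = \int_{\R_+ \times \R} h^{(k)}(w\alpha - \beta) \, \alpha^k \, \diff\nu(\alpha, \beta).
\end{equation*}
Since $\alpha^k \geq 0$ on $\supp \nu$ and $(-1)^k h^{(k)} \geq 0$ by the hypothesis on $h$ for each $k \leq K$, the integrand has sign $(-1)^k$, and integration against the non-negative $\nu$ preserves this. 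Hence $(-1)^k f^{(k)}(w) \geq 0$ for $0 \leq k \leq K$, which is exactly complete monotonicity to order $K$.

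The main obstacle is the existence and \emph{positivity} of $\nu$. In the commutative case ($\mtx{A}\mtx{B} = \mtx{B}\mtx{A}$) one simply takes $\nu$ to be the empirical distribution of the $d$ simultaneous eigenvalue pairs, and there is nothing to do; the content lies entirely in the non-commutative case, where spectral projectors of $\mtx{A}$ and $\mtx{B}$ do not commute and have no joint positivity in any naive sense. I would build $\nu$ via a moment problem: define a linear functional $L$ on polynomials in two commuting indeterminates by the symmetrized rule
\begin{equation*}
L(\alpha^i \beta^j) \coloneqq \trace\bigl( \mtx{A}^{i/2} \mtx{B}^j \mtx{A}^{i/2} \bigr),
\end{equation*}
extended by linearity, and prove that $L(p^2) \geq 0$ for every real polynomial $p$. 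The strategy is to exploit cyclicity of the trace together with the psd property of $\mtx{A}$ to recognize each such expression as $\trace(Q^* Q)$ for a suitable matrix polynomial $Q$ in $\mtx{A}$ and $\mtx{B}$; one can verify this by direct computation on monomials and then observe that the cross-terms in $p^2$ assemble via cyclicity into the required non-negative form. A multivariate moment theorem on $\R_+ \times \R$ then upgrades $L$ to a genuine non-negative Borel measure $\nu$ with the prescribed moments, and the representation for arbitrary $h$ follows by polynomial approximation in a topology appropriate to the support of $\spec \mtx{A} \times \spec \mtx{B}$. The delicate step is verifying $L(p^2) \geq 0$ for \emph{general} $p$ rather than monomials alone; this is the tracial positivity input secured by Hein{\"a}vaara~\cite{Hei24:Tracial-Joint}.

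As a sanity check on the plan, observe that when $h$ admits a Laplace-type representation $h(x) = \int_{[0,\infty)} \econst^{-\theta x} \diff\mu(\theta)$ against a positive measure $\mu$, the theorem reduces to Stahl's theorem by Fubini:
\begin{equation*}
\trace h(w\mtx{A} - \mtx{B}) = \int_{[0,\infty)} \trace \exp\bigl(\theta \mtx{B} - (\theta w)\mtx{A}\bigr) \diff\mu(\theta).
\end{equation*}
For each $\theta \geq 0$, the inner trace is completely monotone in $w \geq 0$ by \cref{fact:bmv}, and integration against the positive measure $\mu$ preserves complete monotonicity. This route reaches fully completely monotone $h$ that admit a Laplace representation but cannot reach the finite-order statement, which is precisely why the moment-problem construction of $\nu$ above is essential for the full generality of Hein{\"a}vaara's theorem.
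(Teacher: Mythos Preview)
The paper does not supply a proof of \cref{fact:heina}; it is quoted as a result of Hein\"avaara~\cite{Hei24:Tracial-Joint}, with only the remark that his argument rests on a \emph{tracial joint spectral measure}. Your high-level plan---construct such a positive measure $\nu$, then differentiate under the integral and use $\alpha^k \geq 0$---is exactly that strategy, and the reduction step is clean once $\nu$ exists.

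The concrete construction you propose, however, is wrong. Your moment functional $L(\alpha^i\beta^j) \coloneqq \trace(\mtx{A}^{i/2}\mtx{B}^j\mtx{A}^{i/2}) = \trace(\mtx{A}^i\mtx{B}^j)$ does not represent $h \mapsto \trace h(w\mtx{A}-\mtx{B})$. Matching the coefficient of $w^k$ in $\trace(w\mtx{A}-\mtx{B})^n$ against $\int(w\alpha-\beta)^n\,\diff\nu$ forces the $(k,j)$ moment of $\nu$ to be the \emph{average} of $\trace u(\mtx{A},\mtx{B})$ over all $\binom{k+j}{k}$ noncommutative words $u$ with $k$ copies of $\mtx{A}$ and $j$ of $\mtx{B}$---not the single word $\mtx{A}^k\mtx{B}^j$. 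These already differ at $(k,j)=(2,2)$, where the correct moment is $\tfrac{2}{3}\trace(\mtx{A}^2\mtx{B}^2)+\tfrac{1}{3}\trace(\mtx{A}\mtx{B}\mtx{A}\mtx{B})$. With your $L$ the representation fails, and nothing downstream follows. Even with the correct moments, the positivity $L(p^2)\geq 0$ is not a cyclicity manipulation of the form $\trace(Q^*Q)$: it is essentially the content of Stahl's theorem itself, and Hein\"avaara's construction uses \cref{fact:bmv} as the positivity input rather than deriving it from scratch. Your Laplace-representation ``sanity check'' is therefore closer to the actual logic of the proof than the moment-problem sketch.
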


\noindent
Hein{\"a}vaara's proof of \cref{fact:heina}
appeals to a novel object, called a
\term{tracial joint spectral measure}, that captures the behavior
of trace functions of the form $(w,y) \mapsto \trace h(w \mtx{A} + y \mtx{B})$
for self-adjoint $\mtx{A}, \mtx{B}$.  His techniques
yield many deep new statements about trace functions.

\subsection{Additional tools}

The argument involves some standard tools from probability
and matrix analysis.  First, we record the Gaussian
integration by parts (IBP) rule~\cite[Lem.~1.1.1]{NP12:Normal-Approximations}.

\begin{fact}[Gaussian IBP] \label{fact:gauss-ibp}
Consider iid real standard normal variables $(\gamma_1, \dots, \gamma_n)$.
For each differentiable function $h : \R^n \to \R$,
\[
\Expect\big[ \gamma_i \cdot h(\gamma_1, \dots, \gamma_n) \big]
	= \Expect\big[ (\partial_i h)(\gamma_1, \dots, \gamma_n) \big].
\]
In this formula, $\partial_i h$ denotes the partial derivative of $h$
with respect to its $i$th argument.  The identity is valid whenever
the right-hand side is finite.
\end{fact}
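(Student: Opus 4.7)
The plan is to establish the identity first in one dimension and then bootstrap to $n$ dimensions via conditioning on the remaining coordinates; the univariate case is a textbook integration by parts that exploits the special form of the Gaussian density. For $n=1$, let $\gamma \sim \normal_{\R}(0,1)$ have density $\varphi(x) = (2\pi)^{-1/2} \econst^{-x^2/2}$, and observe the differential identity $\varphi'(x) = -x \varphi(x)$, which rewrites the target as
\[
\Expect[ \gamma \cdot h(\gamma) ]
	= \int_{-\infty}^{\infty} x \, h(x) \, \varphi(x) \idiff{x}
	= -\int_{-\infty}^{\infty} h(x) \, \varphi'(x) \idiff{x}.
\]
Standard integration by parts then gives $-\int h(x) \varphi'(x)\idiff{x} = \int h'(x) \varphi(x) \idiff{x} - \bigl[ h(x) \varphi(x) \bigr]_{-\infty}^{\infty}$, and the boundary contributions vanish thanks to the rapid Gaussian decay of $\varphi$ together with the integrability hypothesis on $h'$ stated in the fact.

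To extend to arbitrary $n$, condition on $\vct{\gamma}_{-i} \coloneqq (\gamma_j)_{j \neq i}$. Independence of the coordinates ensures that the conditional law of $\gamma_i$ remains $\normal_{\R}(0,1)$, so I would apply the univariate identity pointwise to the random function $x \mapsto h(\gamma_1, \dots, \gamma_{i-1}, x, \gamma_{i+1}, \dots, \gamma_n)$ to obtain
\[
\Expect\bigl[ \gamma_i \cdot h(\vct{\gamma}) \,\big|\, \vct{\gamma}_{-i} \bigr]
	= \Expect\bigl[ (\partial_i h)(\vct{\gamma}) \,\big|\, \vct{\gamma}_{-i} \bigr].
\]
The tower property then delivers the stated formula after taking an unconditional expectation.

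The main obstacle is technical rather than conceptual: one must justify the vanishing of the boundary term in the integration by parts assuming only that the right-hand side of the identity is finite. The cleanest route is to first prove the identity for smooth compactly supported $h$ (where everything is immediate), and then to pass to general differentiable $h$ by a truncation-and-mollification approximation combined with dominated convergence, leveraging the assumed finiteness of $\Expect |(\partial_i h)(\vct{\gamma})|$ to control the derivative term in the limit. Alternatively, if one is willing to work with any $h$ that is absolutely continuous along the $i$th coordinate axis, the representation $h(x) = h(0) + \int_0^x h'(t) \idiff{t}$ combined with Fubini against the Gaussian density yields the result directly, bypassing the boundary issue altogether.
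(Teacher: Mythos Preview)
The paper does not prove this fact; it simply records it with a citation to \cite[Lem.~1.1.1]{NP12:Normal-Approximations} and moves on. Your argument is the standard one and is correct: reduce to dimension one by conditioning on the remaining coordinates, then use $\varphi'(x) = -x\varphi(x)$ and integration by parts. Your discussion of the boundary-term technicality and the approximation route is also accurate, though for the purposes of this paper (where the result is invoked only for smooth functions of Gaussians built from the trace exponential, all of which have finite moments of every order) the subtleties you flag do not arise in practice.
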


Second, %
we recall a classic formula from matrix calculus~\cite[Example~X.4.2(v)]{Bha97:Matrix-Analysis}.
For self-adjoint matrices $\mtx{A}, \mtx{H} \in \Sym_d$, the derivative of the exponential
at $\mtx{A}$ in the direction $\mtx{H}$ satisfies
\begin{equation} \label{eqn:d-exp}
(\Diff\, \econst^{\mtx{A}}) [ \mtx{H} ] \coloneqq
	\lim\nolimits_{s \to 0}\ s^{-1} \big[ \econst^{\mtx{A} + s \mtx{H}} - \econst^{\mtx{A}} \big]
	= \int_0^1 \econst^{\tau \mtx{A}} \mtx{H} \econst^{(1-\tau) \mtx{A}} \idiff{\tau}.
\end{equation}
As a consequence, the derivative of the trace exponential simplifies to
\begin{equation} \label{eqn:d-trexp}
\big(\Diff \trace \econst^{\mtx{A}}\big)[\mtx{H}] = \trace\big[ \mtx{H} \econst^{\mtx{A}} \big].
\end{equation}
The statement~\eqref{eqn:d-trexp} follows from~\eqref{eqn:d-exp} when we take the trace
and cycle to combine the exponentials.

\subsection{Comparison for the trace mgf}

With Stahl's theorem (\cref{fact:bmv}) at hand, we can establish a bound
for the trace mgf associated with the minimum eigenvalue of a random psd matrix.

\begin{proposition}[Weighted psd sum: Comparison of trace mgfs] \label{prop:psd-weight-mgf}
Instate the hypotheses of \cref{thm:psd-weights}.
For $\theta \geq 0$,
\[
\mgf_{\mtx{X}}(\theta)
	\coloneqq \Expect \trace \econst^{-\theta(\mtx{X} - \Expect \mtx{X} + \mtx{\Delta})}
	\leq \Expect \trace \econst^{-\theta(\mtx{Z} + \mtx{\Delta})}
	\eqqcolon \mgf_{\mtx{Z}}(\theta).
\]
\end{proposition}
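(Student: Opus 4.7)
My plan is to mirror the scalar argument of \cref{prop:scalar-mgf} by first reducing the matrix comparison to a one-variable replacement, and then establishing that replacement by combining Stahl's theorem (\cref{fact:bmv}) with the scalar mgf bound already in hand. The reduction is a telescoping interpolation: for $k = 0, 1, \dots, n$, introduce hybrid matrices
\begin{equation*}
\mtx{S}_k \coloneqq \sum_{j < k} \gamma_j \mtx{H}_j + \sum_{j \geq k} (W_j - \Expect W_j) \mtx{A}_j + \mtx{\Delta},
\end{equation*}
so that $\mtx{S}_0 = \mtx{X} - \Expect \mtx{X} + \mtx{\Delta}$ and $\mtx{S}_n = \mtx{Z} + \mtx{\Delta}$. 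It suffices to verify $\Expect \trace \econst^{-\theta \mtx{S}_k} \leq \Expect \trace \econst^{-\theta \mtx{S}_{k+1}}$ for each $k$. Conditioning on all variables other than $W_k$ and $\gamma_k$, the remaining summands coalesce into a deterministic self-adjoint matrix $\mtx{R}$, and since $\mtx{H}_k = \sigma_k \mtx{A}_k$ with $\sigma_k^2 = \Expect W_k^2$, the task becomes the one-variable inequality
\begin{equation*}
\Expect_W \trace \econst^{-\theta(\mtx{R} + (W - \Expect W)\mtx{A})} \leq \Expect_\gamma \trace \econst^{-\theta(\mtx{R} + \sigma \gamma \mtx{A})} \qquad (\star)
\end{equation*}
for arbitrary self-adjoint $\mtx{R}$, psd $\mtx{A}$, nonnegative $W$ with $\sigma^2 \coloneqq \Expect W^2$, and $\gamma \sim \normal_{\R}(0,1)$.

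\textbf{The one-variable replacement via Stahl and Bernstein.} To prove $(\star)$, introduce the scalar function
\begin{equation*}
G(w) \coloneqq \trace \exp(\mtx{B} - \theta w \mtx{A}), \qquad \mtx{B} \coloneqq -\theta \mtx{R} + \theta (\Expect W) \mtx{A},
\end{equation*}
for which $G(W) = \trace \exp(-\theta(\mtx{R} + (W - \Expect W)\mtx{A}))$ and $G(\Expect W + \sigma \gamma) = \trace \exp(-\theta(\mtx{R} + \sigma \gamma \mtx{A}))$. Since $\mtx{A}$ is psd and $\mtx{B}$ is self-adjoint, Stahl's theorem (\cref{fact:bmv}) makes $G$ completely monotone on $[0,\infty)$; Bernstein's theorem~\eqref{eqn:bernstein} then supplies a finite positive Borel measure $\rho$ on $[0,\infty)$ with $G(w) = \int \econst^{-\mu w} \rho(\diff\mu)$. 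Taking expectations and exchanging the order of integration by Fubini (all integrands are positive),
\begin{equation*}
\Expect_\gamma G(\Expect W + \sigma \gamma) - \Expect_W G(W)
= \int_{[0,\infty)} \econst^{-\mu \Expect W} \bigl( \econst^{\mu^2 \sigma^2/2} - \Expect \econst^{-\mu(W - \Expect W)} \bigr) \, \rho(\diff\mu).
\end{equation*}
For each $\mu \geq 0$, the single-variable case of \cref{prop:scalar-mgf} yields $\Expect \econst^{-\mu(W - \Expect W)} \leq \econst^{\mu^2 \sigma^2/2}$, so the integrand is pointwise nonnegative. Hence $(\star)$ holds, and telescoping back through the $\mtx{S}_k$ completes the proof.

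\textbf{Main obstacle.} Every ingredient of this plan is elementary except Stahl's theorem (\cref{fact:bmv}), which is the deep matrix-analytic fact that unlocks the entire argument. Its role is to furnish the Bernstein representation of the trace exponential as a positive superposition of decaying exponentials, and this is what allows the scalar mgf bound to be transferred pointwise in $\mu$ and then averaged; all noncommutativity of the matrices in the exponent is absorbed into the representing measure $\rho$. Without complete monotonicity, one is left contending with Duhamel integrals of the form $\int_0^1 \trace[\mtx{A}\econst^{-\tau\theta\mtx{M}}\mtx{A}\econst^{-(1-\tau)\theta\mtx{M}}] \diff \tau$ produced by $\frac{\diff}{\diff w} \exp(\mtx{B} - \theta w \mtx{A})$, and the scalar bound cannot be lifted cleanly. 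The only routine bookkeeping points are the finiteness of $\rho$ (immediate from $G(0) = \trace \exp \mtx{B} < \infty$) and of the right-hand side of $(\star)$, both of which justify the use of Fubini's theorem.
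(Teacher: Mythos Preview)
Your proof is correct and takes a genuinely different route from the paper's. The paper argues by \emph{continuous} interpolation: with $\mtx{Y}_s = \sqrt{s}\,(\mtx{X}-\Expect\mtx{X}) + \sqrt{1-s}\,\mtx{Z} + \mtx{\Delta}$ and $u(s) = \Expect\trace\econst^{-\theta\mtx{Y}_s}$, it shows $u'(s)\leq 0$ by differentiating via the Duhamel formula~\eqref{eqn:d-exp}, evaluating the Gaussian contribution with integration by parts (\cref{fact:gauss-ibp}), and bounding the $\mtx{X}$-contribution by applying \cref{lem:cov-cm} directly to the Stahl-CM trace function. Your Lindeberg telescoping plus the Bernstein representation instead hides all of the matrix calculus inside the representing measure $\rho$, reducing the matrix comparison to the scalar mgf bound of \cref{prop:scalar-mgf} applied pointwise in the Laplace frequency $\mu$. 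This is arguably cleaner---you never write a Duhamel integral---but it uses the full Laplace-transform characterization of complete monotonicity rather than only the order-four sign conditions that \cref{lem:cov-cm} needs. One consequence: the paper's derivative argument transfers verbatim to the polynomial-moment comparison (\cref{prop:psd-weights-poly}) via Hein{\"a}vaara's theorem, whereas your Bernstein reduction is specific to the exponential.

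Two small points deserve tightening. First, there is an off-by-one in the hybrids: as written, $\mtx{S}_n$ still contains $(W_n-\Expect W_n)\mtx{A}_n$; either let $k$ run to $n+1$ or replace $j<k$ by $j\leq k$. Second, you invoke the Bernstein identity $G(w)=\int\econst^{-\mu w}\rho(\diff\mu)$ at $w=\Expect W+\sigma\gamma$, which can be negative, while~\eqref{eqn:bernstein} as stated only furnishes the representation on $\R_+$. This gap is harmless---Stahl's measure $\rho$ is supported on the spectrum of $\theta\mtx{A}$ and hence compactly supported, so the Laplace integral defines an entire function that agrees with the entire function $G$ on $[0,\infty)$ and therefore everywhere---but it merits a sentence.
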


In the scalar setting (\cref{prop:scalar-mgf}),
the mgf of a Gaussian random variable emerges from a direct argument.
In the matrix setting, it is more expedient to \hilite{compare} the
trace mgf of the psd model with the trace mgf of the
Gaussian model.  This argument relies on interpolation,
much like classic Gaussian comparison inequalities~\cite{Kah86:Inegalite-Type}
or more recent work on universality for random matrices~\cite{BvH24:Universality-Sharp}.

\begin{proof}
To implement the comparison, we interpolate between the two random matrix models.
Define a path in the space of random matrices:
\begin{equation} \label{eqn:mtx-path}
\mtx{Y}_s \coloneqq \sqrt{s}\, (\mtx{X} - \Expect \mtx{X}) + \sqrt{1-s} \, \mtx{Z} + \mtx{\Delta}
\quad\text{for $s \in [0,1]$.}
\end{equation}
Introduce the trace mgf of the interpolants:
\[
u(s) \coloneqq \Expect \trace \econst^{-\theta \mtx{Y}_s}
\quad\text{for $s \in [0,1]$.}
\]
Note that $u(0) = \mgf_{\mtx{Z}}(\theta)$ and $u(1) = \mgf_{\mtx{X}}(\theta)$.
We \hilite{claim} that the derivative $u'(s) \leq 0$ for $s \in (0,1)$.
Once this point is settled, \cref{prop:scalar-mgf} follows inexorably.

By a scaling argument, we can assume that \hilite{$\theta = 1$}.
Now, for fixed $s \in (0,1)$, the derivative $u'(s)$ along the
interpolation path takes the form
\begin{equation} \label{eqn:u'}
u'(s) = \frac{-1}{2\sqrt{s}} \Expect \trace \big[ (\mtx{X} - \Expect \mtx{X}) \econst^{-\mtx{Y}_s} \big]
	- \frac{-1}{2\sqrt{1 - s}} \Expect \trace \big[ \mtx{Z} \econst^{-\mtx{Y}_s} \big]
	\eqqcolon \onecirc - \twocirc.
\end{equation}
We start with the second term $\twocirc$, involving the Gaussian random matrix, as it offers
a template for how to bound the first term $\onecirc$.

To handle the term $\twocirc$ from~\eqref{eqn:u'}, we invoke Gaussian integration by parts. %
In preparation, expand the random matrix $\mtx{Z} = \sum_{i=1}^n \gamma_i \mtx{H}_i$
as a sum: %
\begin{align*}
\twocirc &= \frac{-1}{2\sqrt{1-s}} \Expect \trace\big[ \mtx{Z} \econst^{-\mtx{Y}_s} \big]
	= \frac{-1}{2\sqrt{1-s}} \sum_{i=1}^n \Expect \big[ \gamma_i \trace\big[ \mtx{H}_i \econst^{-\mtx{Y}_s} \big] \big] \\
	&= \frac{1}{2} \sum_{i=1}^n \int_0^1 \Expect \trace\big[ \mtx{H}_i \econst^{-\tau \mtx{Y}_s} \mtx{H}_i \econst^{-(1-\tau) \mtx{Y}_s} \big] \idiff{\tau} \\
	&= \frac{1}{2} \sum_{i=1}^n \Expect[ W_i^2 ] \cdot \int_0^1 \Expect \trace\big[ \mtx{A}_i \econst^{-\tau \mtx{Y}_s} \mtx{A}_i \econst^{-(1-\tau) \mtx{Y}_s} \big] \idiff{\tau}. 
\end{align*}
The second line is the Gaussian IBP rule (\cref{fact:gauss-ibp}).  This calculation
relies on the formula~\eqref{eqn:d-exp} for the derivative of the matrix
exponential.  The partial derivative $\partial_{\gamma_i} \mtx{Y}_s = \sqrt{1 - s} \, \mtx{H}_i$,
owing to the expressions~\eqref{eqn:mtx-path} for $\mtx{Y}_s$ and~\eqref{eqn:psd-weight-gauss}
for $\mtx{Z}$.  For the last step, recall the definition~\eqref{eqn:psd-weight-system}
of the matrix coefficients $\mtx{H}_i$.

To obtain a bound for the term $\onecirc$ in~\eqref{eqn:u'}, we plan to exploit \cref{lem:cov-cm}.
Expand the centered random matrix $\mtx{X} - \Expect \mtx{X} = \sum_{i=1}^n (W_i - \Expect W_i) \mtx{A}_i$
as a sum: %
\begin{align*}
\onecirc &= \frac{-1}{2\sqrt{s}} \Expect \trace \big[ (\mtx{X} - \Expect \mtx{X}) \econst^{-\mtx{Y}_s} \big]
	= \frac{-1}{2\sqrt{s}} \sum_{i=1}^n \Expect \trace \big[ (W_i - \Expect W_i) \mtx{A}_i \econst^{-\mtx{Y}_s} \big] \\
	&\eqqcolon \frac{1}{2s} \sum_{i=1}^n \Expect \trace \big[ - \sqrt{s} (W_i - \Expect W_i) \mtx{A}_i \econst^{\mtx{B}_i - \sqrt{s} W_i \mtx{A}_i} \big].
\end{align*}
For each index $i$, we have defined the random matrix $\mtx{B}_i \coloneqq \sqrt{s} W_i \mtx{A}_i - \mtx{Y}_s$.
Observe that $\mtx{B}_i$ is statistically independent from the random variable $W_i$.

Conditional on $\mtx{B}_i$, introduce the (deterministic) function
\[
f_i(w) \coloneqq \trace \econst^{\mtx{B}_i - \sqrt{s} w \mtx{A}_i}
\quad\text{for $w \geq 0$ and $i = 1, \dots, n$.}
\]
Stahl's theorem (\cref{fact:bmv}) asserts that \hilite{each $f_i$ is completely monotone}
for fixed self-adjoint $\mtx{A}_i, \mtx{B}_i$ with $\mtx{A}_i$ psd.
In light of~\eqref{eqn:d-exp} and~\eqref{eqn:d-trexp},
the first two derivatives of $f_i$ satisfy
\begin{align*}
f_i'(w) &= - \sqrt{s}  \cdot \trace \big[ \mtx{A}_i \econst^{\mtx{B}_i - \sqrt{s} w \mtx{A}_i} \big]; \\
f_i''(w) &=  s\cdot \int_0^1 \trace \big[ \mtx{A}_i \econst^{\tau(\mtx{B}_i - \sqrt{s} w \mtx{A}_i)} \mtx{A}_i \econst^{(1-\tau)(\mtx{B}_i - \sqrt{s} w \mtx{A}_i)} \big] \idiff{\tau}.
\end{align*}
We may now express $\onecirc$ in terms of the functions $f_i$.  This revision
allows us to invoke \cref{lem:cov-cm}, conditional on $\mtx{B}_i$.  We arrive
at the inequality
\begin{align*}
\onecirc &= \frac{1}{2s} \sum_{i=1}^n \Expect \big[ (W_i - \Expect W_i) f_i'(W_i) \big]
	\leq \frac{1}{2s} \sum_{i=1}^n \Expect[ W_i^2 ] \cdot \Expect[ f_i''(W_i) ] \\
	&= \frac{1}{2} \sum_{i=1}^n \Expect[ W_i^2 ] \cdot \int_0^1 \Expect \trace \big[
	\mtx{A}_i \econst^{-\tau \mtx{Y}_s} \mtx{A}_i \econst^{-(1-\tau)\mtx{Y}_s} \big] \idiff{\tau} = \twocirc.
\end{align*}
The last line depends on the relations $\mtx{B}_i - \sqrt{s} W_i \mtx{A}_i = - \mtx{Y}_s$.
As promised, $u'(s) = \onecirc - \twocirc \leq 0$.
\end{proof}

\subsection{Extension: Polynomial moments}

The proof of~\cref{prop:psd-weight-mgf} can be adapted to obtain
a comparison theorem for one-sided polynomial moments.

\begin{proposition}[Randomly weighted sum: Polynomial moment bound] \label{prop:psd-weights-poly}
Instate the hypotheses of \cref{thm:psd-weights}.  For each $p \geq 4$,
\[
\Expect \trace{} (\mtx{X} - \Expect \mtx{X} + \mtx{\Delta})_-^p
	\leq \Expect \trace{} (\mtx{Z} + \mtx{\Delta})_-^p.
\]
As usual, the negative part $(a)_{-} \coloneqq \max\{-a, 0\}$ for $a \in \R$
binds before the power.
\end{proposition}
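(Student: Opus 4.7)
The plan is to replay the interpolation argument behind \cref{prop:psd-weight-mgf}, with the trace exponential replaced by the trace polynomial $\mtx{Y} \mapsto \trace(\mtx{Y})_-^p$. Keep the same path $\mtx{Y}_s \coloneqq \sqrt{s}\,(\mtx{X} - \Expect \mtx{X}) + \sqrt{1-s}\,\mtx{Z} + \mtx{\Delta}$ for $s \in [0,1]$, and set $u(s) \coloneqq \Expect \trace (\mtx{Y}_s)_-^p$, so that $u(0)$ and $u(1)$ reproduce the two sides of the desired inequality. Differentiating via the matrix chain rule with $\phi(y) \coloneqq y_-^p$ (so $\phi'(y) = -p\,y_-^{p-1}$) produces $u'(s) = -p\,(\onecirc' - \twocirc')$, where
\[
\onecirc' \coloneqq \tfrac{1}{2\sqrt{s}}\,\Expect \trace[(\mtx{X} - \Expect \mtx{X})\,(\mtx{Y}_s)_-^{p-1}], \qquad
\twocirc' \coloneqq \tfrac{1}{2\sqrt{1-s}}\,\Expect \trace[\mtx{Z}\,(\mtx{Y}_s)_-^{p-1}].
\]
It thus suffices to establish $\onecirc' \geq \twocirc'$.

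For the Gaussian term $\twocirc'$, apply Gaussian IBP (\cref{fact:gauss-ibp}) exactly as in the mgf proof: expand $\mtx{Z} = \sum_i \gamma_i \mtx{H}_i$ with $\partial_{\gamma_i}\mtx{Y}_s = \sqrt{1-s}\,\mtx{H}_i$, and invoke the Daleckii--Krein directional derivative of the matrix function $\phi_1(\mtx{Y}) \coloneqq (\mtx{Y})_-^{p-1}$ (which plays the role that the integral representation of $\Diff\,\econst^{\cdot}$ played in the exponential case). The calculation yields
\[
\twocirc' = \tfrac{1}{2}\sum_i \Expect W_i^2 \cdot \Expect \trace\!\big[\mtx{A}_i \cdot (\Diff\phi_1(\mtx{Y}_s))[\mtx{A}_i]\big].
\]
For the psd-sum term $\onecirc'$, condition on $\mtx{B}_i \coloneqq \sqrt{s}\,W_i \mtx{A}_i - \mtx{Y}_s$ (which is independent of $W_i$) and introduce the scalar trace function
\[
\tilde{f}_i(w) \coloneqq \trace\!\big[(\sqrt{s}\,w\, \mtx{A}_i - \mtx{B}_i)_-^p\big].
\]
A direct derivative computation yields $\trace[\mtx{A}_i\,(\mtx{Y}_s)_-^{p-1}] = -(p\sqrt{s})^{-1}\,\tilde{f}_i'(W_i)$, so that $\onecirc' = -(2ps)^{-1}\sum_i \Cov(W_i, \tilde{f}_i'(W_i))$. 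The crucial point is that \cref{fact:heina}, applied with the completely monotone function $h(y) = y_-^p$ (CM to order at least $4$ since $p \geq 4$) and with the psd matrix $\sqrt{s}\,\mtx{A}_i$, shows that $\tilde{f}_i$ is itself completely monotone to order $4$ in $w$. Hence \cref{lem:cov-cm} applies conditionally on $\mtx{B}_i$ to yield $\Cov(W_i, \tilde{f}_i'(W_i)) \leq \Expect W_i^2 \cdot \Expect \tilde{f}_i''(W_i)$; computing $\tilde{f}_i''(W_i)$ via the same matrix chain rule produces exactly $-ps \cdot \trace[\mtx{A}_i\,(\Diff \phi_1(\mtx{Y}_s))[\mtx{A}_i]]$. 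Substituting gives $\onecirc' \geq \twocirc'$, as required.

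The main obstacle is the sign convention for $\tilde{f}_i$. The parametrization $\tilde{f}_i(w) = \trace[(\sqrt{s}w\mtx{A}_i - \mtx{B}_i)_-^p]$ places the psd matrix as the coefficient of $w$, which is the form required by \cref{fact:heina} to deliver complete monotonicity. The opposite choice, $\trace[(\mtx{B}_i - \sqrt{s}w\mtx{A}_i)_-^p]$, would be absolutely monotone in $w$, and \cref{lem:cov-cm}---which hinges on the second derivative being convex and decreasing ($f''' \leq 0$ and $f^{(4)} \geq 0$)---would fail to apply. This same four-derivative requirement also explains the hypothesis $p \geq 4$: the lemma consumes four derivatives of $y_-^p$ with the correct alternating signs. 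Notably, Stahl's theorem alone is not enough here; the polynomial nature of the trace function requires Hein\"avaara's more general \cref{fact:heina}.
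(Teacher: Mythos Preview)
Your proposal is correct and follows essentially the same approach as the paper's proof sketch: replay the interpolation argument of \cref{prop:psd-weight-mgf} with the trace exponential replaced by $\trace(\cdot)_-^p$, invoke Hein{\"a}vaara's theorem (\cref{fact:heina}) to obtain complete monotonicity to order four of the trace function $w \mapsto \trace(\sqrt{s}\,w\mtx{A}_i - \mtx{B}_i)_-^p$, and then apply \cref{lem:cov-cm}. Your write-up is in fact more detailed than the paper's sketch, and your closing remarks on the sign convention and on why Stahl's theorem alone does not suffice are accurate and to the point.
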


\begin{proof}[Proof sketch]
The structure of the argument is identical with the proof of~\cref{prop:psd-weight-mgf}.
For justification, when $p \geq 4$, note that $h : w \mapsto (w)_-^p$
is completely monotone to order four on the real line.
Heinev{\"a}ara's theorem (\cref{fact:heina}) ensures
that the associated trace function $f(w) \coloneqq \trace{} (w\mtx{A} - \mtx{B})_{-}^p$
is also completely monotone to order four.  Therefore,
we can activate \cref{lem:cov-cm}.

While it is not really necessary to compute the derivatives of the trace function $f$ explicitly,
one may employ the Dalecki{\u \i}--Kre{\u \i}n formula~\cite[Thm.~V.3.3]{Bha97:Matrix-Analysis}
to obtain the detailed expressions.
\end{proof}

\section{Gaussian comparison: Sum of iid random psd matrices}
\label{sec:iid}

This section develops a comparison theorem for the minimum
eigenvalue of a sum of \hilite{iid} random psd matrices.
This formulation includes covariance matrices and related models.
Surprisingly, this result is a consequence of the
comparison (\cref{thm:psd-weights}) for sums of
randomly weighted psd matrices.

Let $\mtx{W}$ be a random \hilite{psd} matrix with dimension $d$. %
Assume that $\mtx{W}$ is square-integrable: $\Expect \norm{\mtx{W}}^2 < + \infty$.
For a fixed natural number $k \in \N$, consider the random psd matrix $\mtx{Y}$
obtained by adding $k$ independent copies of $\mtx{W}$.  That is,
\begin{equation} \label{eqn:iid-sum}
\mtx{Y} \coloneqq \sum_{j=1}^k \mtx{W}_j
\quad\text{where $\mtx{W}_j \sim \mtx{W}$ iid.}
\end{equation}
We compare the random psd matrix $\mtx{Y}$ with an appropriate
Gaussian model.  Consider a \hilite{centered}
Gaussian matrix that follows the distribution
\begin{equation} \label{eqn:iid-gauss}
\mtx{Z} \sim \normal(\mtx{0}, \mathsf{V})
\quad\text{where}\quad
\mathsf{V} \coloneqq k \cdot \Mo[\mtx{W}].
\end{equation}
With these definitions, we can state
another comparison theorem.

\begin{theorem}[Gaussian comparison: Sum of iid psd matrices] \label{thm:iid-sum}
Fix an arbitrary self-adjoint matrix $\mtx{\Delta} \in \Sym_d$.
Introduce the random $d$-dimensional matrices $\mtx{Y}$ and $\mtx{Z}$
from~\eqref{eqn:iid-sum} and~\eqref{eqn:iid-gauss}.
Then
\[
\Expect \lambda_{\min}(\mtx{Y} - \Expect \mtx{Y} + \mtx{\Delta})
	\geq \Expect \lambda_{\min}(\mtx{Z} + \mtx{\Delta})
	- \sqrt{2 \smash{\sigma_*^2(\mtx{Z})} \log(2d)}.
\]
In addition, for all $t \geq 0$,
\[
\Prob{ \lambda_{\min}(\mtx{Y} - \Expect \mtx{Y} + \mtx{\Delta})
	\leq \lambda_{\min}(\mtx{Z} + \mtx{\Delta}) - t}
	\leq 2d \cdot \econst^{-t^2 / (2\sigma_*^2(\mtx{Z}))}.
\]
The weak variance $\sigma_*^2(\mtx{Z})$ is defined in~\eqref{eqn:weak-var}.
\end{theorem}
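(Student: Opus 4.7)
The plan is to derive \cref{thm:iid-sum} from the already-proved \cref{thm:psd-weights} (equivalently, from its trace-mgf precursor \cref{prop:psd-weight-mgf}) in two moves: first, establish a trace-mgf comparison for a \emph{compound-Poisson} analogue of $\mtx{Y}$; then, de-Poissonize via a monotone coupling that pays the factor of $2$ visible in $2d$ and $\log(2d)$.

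For the first move, I would inject auxiliary Bernoulli randomness. Fix a large integer $n$, draw iid copies $\mtx{A}_1,\dots,\mtx{A}_n$ of $\mtx{W}$, and, independently, take $W_i\sim\bernoulli(k/n)$ iid. Conditional on $(\mtx{A}_i)$, the randomly weighted psd sum $\widetilde{\mtx{Y}}_n := \sum_i W_i\mtx{A}_i$ satisfies the hypotheses of \cref{prop:psd-weight-mgf} with deterministic psd coefficients $\mtx{A}_i$ and independent nonnegative weights $W_i$; the conditional Gaussian comparison is $\widetilde{\mtx{Z}}_n\,|\,(\mtx{A}_i) = \sqrt{k/n}\sum_i\gamma_i\mtx{A}_i$. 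Passing $n\to\infty$: by the strong law, the random conditional mean $(k/n)\sum_i\mtx{A}_i$ and the random conditional variance function $\mtx{M}\mapsto (k/n)\sum_i\abssq{\ip{\mtx{M}}{\mtx{A}_i}}$ concentrate on $k\Expect\mtx{W}$ and on $k\cdot\Mo[\mtx{W}](\mtx{M})$ respectively; by the multivariate CLT applied to the iid summands $\sqrt{k/n}\,\gamma_i\mtx{A}_i$, the Gaussian mixture $\widetilde{\mtx{Z}}_n$ converges in distribution to $\mtx{Z}\sim\normal(\mtx{0},\,k\cdot\Mo[\mtx{W}])$; and the iid-weighted sum $\widetilde{\mtx{Y}}_n$ converges in distribution to the compound-Poisson sum $\widetilde{\mtx{Y}} := \sum_{j=1}^N\mtx{W}_j$, where $N\sim\poisson(k)$ is independent of the iid sequence $(\mtx{W}_j)$. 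Modulo a uniform-integrability check that invokes the second-moment hypothesis $\Expect\norm{\mtx{W}}^2<\infty$, the pre-limit trace-mgf comparison from \cref{prop:psd-weight-mgf} passes to the limit as
\begin{equation*}
\Expect\trace\econst^{-\theta(\widetilde{\mtx{Y}}-\Expect\widetilde{\mtx{Y}}+\mtx{\Delta})}
   \;\leq\; \Expect\trace\econst^{-\theta(\mtx{Z}+\mtx{\Delta})}
   \qquad(\theta\geq 0),
\end{equation*}
using the matching $\Expect\widetilde{\mtx{Y}}=k\Expect\mtx{W}=\Expect\mtx{Y}$.

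For the second move, I would de-Poissonize by sharing summands. Place $\mtx{Y}$ and $\widetilde{\mtx{Y}}$ on a common probability space with the same iid sequence $(\mtx{W}_j)_{j\geq 1}$, so that $\mtx{Y}=\sum_{j=1}^k\mtx{W}_j$ and $\widetilde{\mtx{Y}}=\sum_{j=1}^N\mtx{W}_j$ with $N$ independent of $(\mtx{W}_j)$, hence of $\mtx{Y}$. On the event $\{N\leq k\}$ we have $\mtx{Y}-\widetilde{\mtx{Y}}=\sum_{j=N+1}^k\mtx{W}_j\psdge\mtx{0}$, and therefore $\mtx{Y}-\Expect\mtx{Y}+\mtx{\Delta}\psdge\widetilde{\mtx{Y}}-\Expect\widetilde{\mtx{Y}}+\mtx{\Delta}$. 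Because $\mtx{A}\mapsto\trace\econst^{-\theta\mtx{A}}$ is monotone-decreasing in the psd order for $\theta\geq 0$, this transfers pointwise on $\{N\leq k\}$. Multiplying by $\indicator_{\{N\leq k\}}$, pulling $\Probe[N\leq k]$ out of the LHS by the independence of $N$ and $\mtx{Y}$, discarding the indicator on the positive RHS, and using the classical bound $\Probe[\poisson(k)\leq k]\geq 1/2$ on the median of a Poisson variable yield the factor-of-two trace-mgf comparison
\begin{equation*}
\Expect\trace\econst^{-\theta(\mtx{Y}-\Expect\mtx{Y}+\mtx{\Delta})}
   \;\leq\; 2\cdot\Expect\trace\econst^{-\theta(\mtx{Z}+\mtx{\Delta})}
   \qquad(\theta\geq 0).
\end{equation*}
The rest is mechanical: combine this with $\trace\econst^{-\theta\mtx{A}}\leq d\cdot\econst^{-\theta\lambda_{\min}(\mtx{A})}$, Gaussian concentration (\cref{fact:gauss-lip}) for $\lambda_{\min}(\mtx{Z}+\mtx{\Delta})$, and the matrix Laplace transform method, exactly as in the proof of \cref{thm:psd-weights}, with $d$ replaced everywhere by $2d$. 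This produces both the expectation bound (carrying $\sqrt{2\sigma_*^2(\mtx{Z})\log(2d)}$) and the tail bound (carrying $2d\cdot\econst^{-t^2/(2\sigma_*^2(\mtx{Z}))}$).

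The main obstacle is the first move. Justifying the simultaneous $n\to\infty$ limit on both sides of the trace-mgf inequality — reconciling the strong law for the sample average of $\mtx{A}_i$, the matrix CLT for $\widetilde{\mtx{Z}}_n$, and the Bernoulli-to-Poisson convergence for $\widetilde{\mtx{Y}}_n$, all under the minimal second-moment assumption on $\mtx{W}$ — is the ``grisly'' part promised in the introduction. The de-Poissonization in the second move is short once one notices that the psd-domination points the right way on precisely the event $\{N\leq k\}$, and that this event carries probability at least $1/2$.
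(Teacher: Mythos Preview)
Your proposal is correct and shares all the essential ingredients with the paper's proof: empirical approximation by a large iid sample $(\mtx{A}_i)$, application of \cref{prop:psd-weight-mgf} conditional on that sample, a Poisson-median monotonicity argument yielding the factor $2$, and a limit in $n$ requiring truncation and uniform-integrability bookkeeping.  The organization, however, differs.  The paper first builds a \emph{multinomial} proxy $\widehat{\mtx{Y}}_n=\sum_i\delta_i\mtx{A}_i$ (which has the right marginal distribution for each finite $n$ once the sample is unfrozen) and then Poissonizes at finite $n$ to decouple the dependent $\delta_i$'s before invoking \cref{prop:psd-weight-mgf}; the limit then directly recovers $\mtx{Y}$.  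You instead use \emph{Bernoulli} weights, which are already independent, apply \cref{prop:psd-weight-mgf} immediately, and take the limit to get a \emph{compound-Poisson} sum $\widetilde{\mtx{Y}}$; you then de-Poissonize at the limit via the coupling $\mtx{Y}\psdge\widetilde{\mtx{Y}}$ on $\{N\leq k\}$.  Your ordering is arguably cleaner, since it avoids the multinomial intermediary entirely and makes the source of the factor $2$ more transparent.  One small remark: your appeal to the ``multivariate CLT'' for $\widetilde{\mtx{Z}}_n$ is a misnomer---the summands $\sqrt{k/n}\,\gamma_i\mtx{A}_i$ are already conditionally Gaussian, so $\widetilde{\mtx{Z}}_n$ is a Gaussian mixture and the limit follows simply from the a.s.\ convergence of the random covariance $\mathsf{V}_n$ to $k\cdot\Mo[\mtx{W}]$ (this is how the paper argues in \cref{lem:gauss-weak}).
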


\begin{proof}[Proof of \cref{thm:iid-sum}]
The content of the argument is a comparison inequality for the trace mgfs:
\begin{equation} \label{eqn:iid-mgf-bound-main}
\mgf_{\mtx{Y}}(\theta) \coloneqq \Expect \trace \econst^{- \theta (\mtx{Y} - \Expect \mtx{Y} + \mtx{\Delta})}
	\leq 2 \Expect \trace \econst^{-\theta (\mtx{Z} + \mtx{\Delta})}
	\quad\text{for $\theta \geq 0$.}
\end{equation}
See~\eqref{eqn:iid-mgf-pf} below.
The rest of the argument follows the same route
as the proof of~\cref{thm:psd-weights}.
\end{proof}

\begin{proof}[Proof of \cref{thm:iid-intro} from \cref{thm:iid-sum}]
To derive the result in the introduction, we choose the shift $\mtx{\Delta} = \Expect \mtx{Y}$,
and we employ monotonicity properties of the Gaussian distribution
to relax the assumption on the variance function.

Here are the details.  Construct the Gaussian distribution
$\mtx{Z}' \sim \normal(\mtx{\Delta}, \mathsf{V}')$
where $\mtx{\Delta} = \Expect \mtx{Y} $ and the
variance function $\mathsf{V}' \geq \mathsf{V} = k \cdot \Mo[\mtx{W}]$.
Invoke \cref{thm:iid-sum} with the centered Gaussian matrix
$\mtx{Z} \sim \normal(\mtx{0}, \mathsf{V})$ to obtain the
expectation bound
\[
\Expect \lambda_{\min}(\mtx{Y})
	\geq \Expect \lambda_{\min}(\mtx{Z} + \mtx{\Delta})	- \sqrt{2\smash{\sigma_*^2(\mtx{Z})} \log(2d)}. 
\]
Gaussian monotonicity (\cref{prop:gauss-monotone}) for the concave function
$\mtx{A} \mapsto \lambda_{\min}(\mtx{A} + \mtx{\Delta})$
and monotonicity of the weak variance~\eqref{eqn:wvar-monotone}
imply that
\[
\Expect \lambda_{\min}(\mtx{Z} + \mtx{\Delta})
	\geq \Expect \lambda_{\min}(\mtx{Z}')
	\quad\text{and}\quad
	\sigma_*^2(\mtx{Z}) \leq \sigma_*^2(\mtx{Z}').
\]
Combine the last two displays and adjust the notation to reach
the expectation bound~\eqref{eqn:intro-thm-iid-expect} in \cref{thm:iid-intro}.
The proof of the tail bound~\eqref{eqn:intro-thm-iid-tail} is similar.
\end{proof}

\subsection{Proof of the trace mgf bound: Overview}

The trace mgf bound~\eqref{eqn:iid-mgf-bound-main}
for an iid sum is a (nontrival) corollary of the trace mgf
bound for a randomly weighted sum (\cref{prop:psd-weight-mgf}).
Our strategy depends on an {empirical approximation}
of the distribution of the iid sum $\mtx{Y}$.  We draw and fix a
large sample from the distribution of the summand $\mtx{W}$,
and we form a proxy $\smash{\widehat{\mtx{Y}}}$ for the random matrix $\mtx{Y}$
by summing $k$ of the sample points. %
The empirical approximation $\smash{\widehat{\mtx{Y}}}$ is a randomly weighted sum
of fixed psd matrices with \hilite{dependent} coefficients.
Via the classic Poissonization technique, we can decouple the coefficients
to arrive at a randomly weighted sum with \hilite{independent} coefficients.
\Cref{prop:psd-weight-mgf} allows us to compare the trace mgf of the
independent model with a Gaussian distribution.
Related arguments have appeared in the probability literature;
for example, see~\cite[Sec.~11.11]{Tal21:Upper-Lower-2ed}.

\subsection{Step 1: Empirical approximation}

To lighten notation, assume that the shift $\mtx{\Delta} \in \Sym_d$
equals the zero matrix.  The proof for a general shift is no different.

For a large parameter $n \in \N$, draw a sample
$\coll{A}_n \coloneqq (\mtx{A}_1, \dots, \mtx{A}_n)$
where the matrices $\mtx{A}_i$ are iid copies of $\mtx{W}$.
The sampled matrices may not be distinct.  Until
the last steps of the proof, \hilite{we regard the
sample $\coll{A}_n$ as fixed}.

Consider a random matrix $\smash{\widehat{\mtx{W}}}$ that takes
a uniformly random value from the fixed sample $\coll{A}_n$:
\[
\widehat{\mtx{W}} = \mtx{A}_I
\quad\text{where}\quad
I \sim \uniform\{1, \dots, n\}.
\]
We construct a proxy $\smash{\widehat{\mtx{Y}}_n}$ for the iid sum $\mtx{Y}$
by forming an iid sum of copies of $\smash{\widehat{\mtx{W}}}$.
\[
\widehat{\mtx{Y}}_n \coloneqq \sum_{j=1}^k \widehat{\mtx{W}}_j
\quad\text{where $\widehat{\mtx{W}}_j \sim \widehat{\mtx{W}}$.}
\]
As a heuristic, when the number $n$ of sample points is large,
the distribution of the proxy $\smash{\widehat{\mtx{Y}}_n}$
is close to the distribution of the original sum $\mtx{Y}$.
\cref{lem:empirical-weak} justifies this claim.

\subsection{Step 2: Multinomial model}

To analyze the proxy $\smash{\widehat{\mtx{Y}}_n}$, we work with an alternative representation.
The $k$ independent summands in the proxy take the form
\[
\widehat{\mtx{W}}_j = \mtx{A}_{I_j}
\quad\text{where $I_j \sim \uniform\{1, \dots, n\}$ iid for $j = 1, \dots, k$.}
\]
For each index $i = 1, \dots, n$, define a scalar random variable $\delta_i$
that counts the number of the $I_j$ that select the index $i$.
That is,
\[
\delta_i \coloneqq \#\big\{ j \in \{1, \dots, k\} : I_j = i \big\}.
\]
As a consequence, $\vct{\delta} \coloneqq (\delta_1, \dots, \delta_n) \sim \multinomial(k, n)$
follows the multinomial distribution of $k$ balls placed independently and uniformly
at random in $n$ bins.  Recall that
\[
\delta_i \sim \binomial(1/n, k)
\quad\text{and}\quad
\sum_{i=1}^n \delta_i = k.
\]
Because of the coupling between the distributions,
\begin{equation} \label{eqn:proxy-multi}
\widehat{\mtx{Y}}_n = \sum_{j=1}^k \widehat{\mtx{W}}_j
	= \sum_{i=1}^n \delta_i \mtx{A}_i.
\end{equation}
Let us emphasize that the coefficient vector $\vct{\delta}$
is independent from the sample $\coll{A}_n$.

\subsection{Step 3: Poissonization}

The coefficients $\delta_i$ in the representation~\eqref{eqn:proxy-multi}
are not independent, but we can pass to a model that has independent coefficients:
\begin{equation} \label{eqn:proxy-poisson}
\mtx{X}_n \coloneqq \sum_{i=1}^n Q_i \mtx{A}_i
\quad\text{where $Q_i \sim \poisson(k/n)$ iid.}
\end{equation}
The Poisson variables $Q_i$ are independent from each other and from the
multinomial variables $\delta_i$.  Since $\Expect Q_i = \Expect \delta_i$,
the conditional expectations of the random matrices $\mtx{X}_n$
and $\smash{\widehat{\mtx{Y}}_n}$ coincide.
\begin{equation} \label{eqn:proxy-expect}
\Expect \big[ \mtx{X}_n \condbar \coll{A}_n \big]
	= \Expect \big[ \widehat{\mtx{Y}}_n \condbar \coll{A}_n \big]
	= \Expect \big[ k\widehat{ \mtx{W} } \condbar \coll{A}_n \big].
\end{equation}
Standard arguments quickly lead to a comparison between the two random models
$\smash{\widehat{\mtx{Y}}_n}$ and $\mtx{X}_n$.

\begin{lemma}[Poissonization] \label{lem:poisson}
Consider the random matrices $\smash{\widehat{\mtx{Y}}_n}$ and $\mtx{X}_n$
defined in~\eqref{eqn:proxy-multi} and~\eqref{eqn:proxy-poisson}.
For all $\theta \geq 0$,
\[
\Expect \big[ \trace \econst^{-\theta (\widehat{\mtx{Y}}_n - \Expect[ \widehat{\mtx{Y}}_n \condbar \coll{A}_n])} \lcondbar \coll{A}_n \big]
	\leq 2 \Expect \big[ \trace \econst^{-\theta ({\mtx{X}}_n - \Expect[ {\mtx{X}}_n \condbar \coll{A}_n ])} \lcondbar \coll{A}_n \big].
\]
\end{lemma}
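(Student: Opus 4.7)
My plan is to Poissonize: represent $\mtx{X}_n$ as an iid sum with a random, Poisson-distributed number of terms, and couple it with $\widehat{\mtx{Y}}_n$ on the event where the number of Poisson terms is at most $k$. The factor $2$ will come from the fact that $\poisson(k)$ has median $k$ for integer $k$.

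\textbf{Step 1 (Coupling).} Work conditionally on $\coll{A}_n$, which we suppress from the notation. Let $\widehat{\mtx{W}}_1, \widehat{\mtx{W}}_2, \dots$ be iid copies of $\widehat{\mtx{W}}$, and let $N \sim \poisson(k)$ be independent of them. By the standard multinomial/Poisson decomposition (conditioning independent $\poisson(k/n)$ variables on their sum gives a $\multinomial(\cdot, n)$ with uniform cell probabilities, which in turn is generated by independent uniform labels on $\{1,\dots,n\}$), we realize simultaneously
\[
\widehat{\mtx{Y}}_n \stackrel{d}{=} \sum_{j=1}^{k} \widehat{\mtx{W}}_j
\qquad\text{and}\qquad
\mtx{X}_n \stackrel{d}{=} \sum_{j=1}^{N} \widehat{\mtx{W}}_j.
\]
In this coupling, $\widehat{\mtx{Y}}_n$ depends only on $\widehat{\mtx{W}}_1,\dots,\widehat{\mtx{W}}_k$, hence is independent of $N$.

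\textbf{Step 2 (Monotonicity in the Loewner order).} Write $\mtx{M}_n \coloneqq \Expect[\widehat{\mtx{Y}}_n \condbar \coll{A}_n] = \Expect[\mtx{X}_n \condbar \coll{A}_n]$, a common shift by \eqref{eqn:proxy-expect}, and define the functional
\[
F(\mtx{Z}) \coloneqq \trace \econst^{-\theta(\mtx{Z} - \mtx{M}_n)}.
\]
The derivative formula~\eqref{eqn:d-trexp} gives $(\Diff F)(\mtx{Z})[\mtx{H}] = -\theta \trace[\mtx{H}\, \econst^{-\theta(\mtx{Z}-\mtx{M}_n)}] \leq 0$ whenever $\mtx{H} \psdge \mtx{0}$ and $\theta \geq 0$, so $F$ is nonincreasing in the Loewner order. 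Since the $\widehat{\mtx{W}}_j$ are psd, on the event $\{N \leq k\}$ the partial sums are Loewner-ordered:
\[
\sum_{j=1}^{N} \widehat{\mtx{W}}_j \ \psdle\ \sum_{j=1}^{k} \widehat{\mtx{W}}_j,
\qquad\text{whence}\qquad
F(\mtx{X}_n) \geq F(\widehat{\mtx{Y}}_n) \text{ on } \{N\leq k\}.
\]

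\textbf{Step 3 (Poisson median).} Drop the nonnegative contribution from $\{N > k\}$ and use the independence from Step~1:
\[
\Expect F(\mtx{X}_n)
\ \geq\ \Expect\!\big[F(\mtx{X}_n)\, \indicator\{N \leq k\}\big]
\ \geq\ \Expect\!\big[F(\widehat{\mtx{Y}}_n)\, \indicator\{N \leq k\}\big]
\ =\ \Probe\{N \leq k\} \cdot \Expect F(\widehat{\mtx{Y}}_n).
\]
For $N \sim \poisson(k)$ with integer $k \geq 1$, a classical fact (the median of a Poisson with integer parameter equals the parameter) yields $\Probe\{N \leq k\} \geq 1/2$, producing the factor $2$ in the lemma.

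The main point requiring care is the Poissonization coupling in Step~1, whose justification reduces to the routine observation that a vector of independent $\poisson(k/n)$ counts, conditioned on its total, is uniformly multinomial, and is distributionally equivalent to counting iid uniform labels. The monotonicity of $F$ in Step~2 is immediate from \eqref{eqn:d-trexp}, and the median bound for an integer-parameter Poisson is standard; no further obstacle is anticipated.
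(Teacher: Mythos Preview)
Your proposal is correct and follows essentially the same route as the paper's proof: Poissonization via the fact that conditioning independent $\poisson(k/n)$ counts on their total yields the multinomial, monotonicity of the trace exponential under psd perturbations, and the Poisson median bound $\Probe\{N\leq k\}\geq 1/2$. The only cosmetic difference is that the paper phrases the coupling in terms of the count vector $(Q_1,\dots,Q_n)$ and conditions on $Q=\ell$ for $\ell\leq k$, whereas you use the equivalent iid-sum representation $\sum_{j=1}^N \widehat{\mtx{W}}_j$; the underlying monotonicity step and the factor~$2$ arise identically.
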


\begin{proof}
With the sample $\coll{A}_n$ fixed,
consider the deterministic, positive function
\[
f(s_1, \dots, s_n) \coloneqq \trace \exp \left( - \theta \left( \sum_{i=1}^n s_i \mtx{A}_i
	- \Expect \big[ k \widehat{\mtx{W}} \condbar \coll{A}_n \big] \right) \right)
	\quad\text{for $(s_1, \dots, s_n) \in \R_+^n$.}
\]
According to~\eqref{eqn:proxy-expect}, the conditional expectation in the definition of $f$
coincides with the conditional expectation of both $\smash{\widehat{\mtx{Y}}_n}$ and of $\mtx{X}_n$.
Thus, we obtain the trace exponential with $\smash{\widehat{\mtx{Y}}_n}$ by evaluating
$f(\delta_1, \dots, \delta_n)$, and we obtain the trace exponential
with $\mtx{X}_n$ by evaluating $f(Q_1, \dots, Q_n)$.

The key insight is that the function $f$ is monotone decreasing
with respect to each argument $s_i$ because each matrix $\mtx{A}_i$ is psd.
This is a well-established and easy fact~\cite[Thm.~2.10]{Car10:Trace-Inequalities},
and it is also contained in Stahl's theorem (\cref{fact:bmv}).

For completeness, we include the familiar argument~\cite[Sec.~5.4]{MU17:Probability-Computing-2ed}
that leads to the conclusion of the lemma.  Recall that $(\delta_1, \dots, \delta_n) \sim \multinomial(k, n)$
and that $Q_i \sim \poisson(k/n)$ iid for $i = 1, \dots, n$.
Define the sum $Q \coloneqq \sum_{i=1}^n Q_i$
of the Poisson variables, and note that $Q \sim \poisson(k)$.
{Conditional on the sample $\coll{A}_n$}, calculate that
\begin{align*}
\Expect[ f( Q_1, \dots, Q_n ) \condbar \coll{A}_n ]
	&\geq \sum_{\ell = 0}^k \Expect[ f(Q_1, \dots, Q_n) \condbar \coll{A}_n, Q = \ell ] \cdot \Prob{Q = \ell} \\
	&\geq \Expect[ f(Q_1, \dots, Q_n) \condbar \coll{A}_n, Q = k ] \cdot \sum_{\ell = 0}^k \Prob{ Q = \ell } \\
	&\geq \frac{1}{2} \Expect[ f(\delta_1, \dots, \delta_n) \condbar \coll{A}_n ].
\end{align*}
The first inequality follows from the law of total expectation and the positivity of $f$.
The second inequality depends on the monotonicity of $f$.
The third inequality holds because $k$ is the median of the
$\poisson(k)$ distribution~\cite[Thm.~2]{Cho94:Medians-Gamma}.
Last, conditioning the Poisson variables on the event $\{Q = k\}$
produces the multinomial distribution~\cite[Thm.~5.6]{MU17:Probability-Computing-2ed}.
\end{proof}

\begin{remark}[Poissonization] \label{rem:poisson}
The analog of \Cref{lem:poisson} holds if we replace the trace exponential
with any other trace function that is both positive and monotone. %
In particular, it applies to the one-sided power function
$\mtx{M} \mapsto \trace{} (\mtx{M})_{-}^p$ for $p > 0$.
\end{remark}

\subsection{Step 4: Comparison with the Gaussian model}

We may now invoke the existing trace mgf bound (\cref{prop:psd-weight-mgf})
to compare the independent model $\mtx{X}_n$ with a suitable Gaussian matrix.

Conditional on the sample $\coll{A}_n$, define the variance function
\[
\mathsf{V}_n(\mtx{M}) \coloneqq \sum_{i=1}^n (\Expect  Q_i^2 ) \cdot \abssq{\ip{\mtx{M}}{\mtx{A}_i}}
	= \left(1 + \frac{k}{n}\right) \frac{k}{n} \sum_{i=1}^n \abssq{\ip{\mtx{M}}{\mtx{A}_i}}
	\quad\text{for $\mtx{M} \in \Sym_d$.}
\]
Indeed, since $Q_i \sim \poisson(k/n)$, its second moment is $(1+k/n)(k/n)$.
Construct the centered (conditionally) Gaussian random matrix
\begin{equation} \label{eqn:proxy-gauss}
\mtx{Z}_n \sim \normal(\mtx{0}, \mathsf{V}_n).
\end{equation}
By the law of large numbers, when the number $n$ of samples is large, $\mathsf{V}_n \approx \mathsf{V}$,
where $\mathsf{V}$ is defined in~\eqref{eqn:iid-gauss}.
As a consequence, the distribution of $\mtx{Z}_n$ is close to the distribution of
the original Gaussian model $\mtx{Z}$ with covariance $\mathsf{V}$.
\Cref{lem:gauss-weak}, below, fully justifies this claim.

To compare the trace mgfs of the proxy $\smash{\widehat{\mtx{Y}}_n}$ and the Gaussian matrix $\mtx{Z}_n$,
first apply the Poissonization result (\cref{lem:poisson}).
Then  invoke the trace mgf bound (\cref{prop:psd-weight-mgf}),
conditional on $\coll{A}_n$.  We arrive at the comparison
\begin{align} \label{eqn:iid-prelimit}
\Expect \big[ \trace \econst^{- \theta (\widehat{\mtx{Y}}_n - \Expect[ \widehat{\mtx{Y}}_n \condbar \coll{A}_n ])} \lcondbar \coll{A}_n \big]
	&\leq 2 \Expect \big[ \trace \econst^{- \theta ({\mtx{X}}_n - \Expect[ {\mtx{X}}_n \condbar \coll{A}_n ])} \lcondbar \coll{A}_n \big] \notag \\
	&\leq 2 \Expect \big[ \trace \econst^{- \theta \mtx{Z}_n } \lcondbar \coll{A}_n \big].
\end{align}
It remains to relate the random matrices $\smash{\widehat{\mtx{Y}}_n}$ and $\mtx{Z}_n$
to the target models $\mtx{Y}$ and $\mtx{Z}$.

\subsection{Step 5: Limits}

At this stage, we can unfreeze	 the random sample $\coll{A}_n$
and take limits.  This process will produce the bound
\begin{equation} \label{eqn:iid-mgf-pf}
\Expect \trace \econst^{- \theta(\mtx{Y} - \Expect \mtx{Y})}
	\leq 2 \Expect \trace \econst^{-\theta \mtx{Z}}.
\end{equation}
The random matrices $\mtx{Y}$ and $\mtx{Z}$ are defined in~\eqref{eqn:iid-sum}
and~\eqref{eqn:iid-gauss}.  The remaining steps leading to this result are technical.

First, \cref{lem:truncation} shows that it is enough to
prove~\eqref{eqn:iid-mgf-pf} under the additional
assumption that the random summand $\mtx{W}$ is bounded.
In this case, we may calculate that
\begin{align*}
\Expect \trace \econst^{-\theta (\mtx{Y} - \Expect \mtx{Y})}
	&= \lim\nolimits_{n \to \infty} \Expect \big[ \Expect \big[ \trace \econst^{- \theta (\widehat{\mtx{Y}}_n - \Expect[ \widehat{\mtx{Y}}_n \condbar \coll{A}_n ])} \lcondbar \coll{A}_n \big] \big] \\
	&\leq 2\lim\nolimits_{n \to \infty} \Expect \big[ \Expect \big[ \trace \econst^{- \theta \mtx{Z}_n} \lcondbar \coll{A}_n \big] \big]
	= 2\Expect \trace \econst^{-\theta \mtx{Z}}.
\end{align*}
The first limit follows from \cref{lem:empirical-weak}.
The second relation is the mgf bound~\eqref{eqn:iid-prelimit}.
The second limit follows from \cref{lem:gauss-weak}.
The details of these computations occupy the upcoming subsections.

\subsection{Technical Step 6: Truncation}

To continue, we restrict our attention to the setting where the
random summand $\mtx{W}$ is bounded in norm.

\begin{lemma}[Truncation] \label{lem:truncation}
Suppose that~\eqref{eqn:iid-mgf-pf} holds when the norm $\norm{\mtx{W}}$ %
is uniformly bounded. %
Then~\eqref{eqn:iid-mgf-pf} remains valid when $\Expect \norm{\mtx{W}}^2 < + \infty$.
\end{lemma}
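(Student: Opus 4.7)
The plan is to truncate the summand at a large scale, apply the bounded-case hypothesis to the truncation, and send the threshold to infinity. Gaussian monotonicity handles the right-hand side for free, and positivity of the truncated iid sum supplies a uniform dominating function on the left.

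For each threshold $R > 0$, form the bounded psd summand $\mtx{W}^{(R)} \coloneqq \mtx{W} \cdot \indicator\{\norm{\mtx{W}} \leq R\}$, the iid sum $\mtx{Y}^{(R)} \coloneqq \sum_{j=1}^k \mtx{W}_j^{(R)}$, and the matching Gaussian model $\mtx{Z}^{(R)} \sim \normal(\mtx{0},\ k \cdot \Mo[\mtx{W}^{(R)}])$. The hypothesis of the lemma yields
\[
\Expect \trace \econst^{-\theta(\mtx{Y}^{(R)} - \Expect \mtx{Y}^{(R)})}
\leq 2 \Expect \trace \econst^{-\theta \mtx{Z}^{(R)}}
\quad\text{for each $\theta \geq 0$.}
\]
Because $\abssq{\ip{\mtx{M}}{\mtx{W}^{(R)}}} \leq \abssq{\ip{\mtx{M}}{\mtx{W}}}$ pointwise, we have $\Mo[\mtx{W}^{(R)}] \leq \Mo[\mtx{W}]$, so $\Varo[\mtx{Z}^{(R)}] \leq \Varo[\mtx{Z}]$. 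Since $\mtx{A} \mapsto \trace \econst^{-\theta \mtx{A}}$ is convex on self-adjoint matrices, Gaussian monotonicity (\cref{prop:gauss-monotone}) yields the uniform estimate $\Expect \trace \econst^{-\theta \mtx{Z}^{(R)}} \leq \Expect \trace \econst^{-\theta \mtx{Z}}$ for every $R$.

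To pass to the limit on the left-hand side, I would use dominated convergence. As $R \to \infty$, each $\mtx{W}_j^{(R)} \to \mtx{W}_j$ almost surely, and $\Expect \mtx{W}^{(R)} \to \Expect \mtx{W}$ because $\Expect \norm{\mtx{W}} \leq (\Expect \norm{\mtx{W}}^2)^{1/2} < \infty$. Continuity of the trace exponential then gives almost-sure convergence of the integrand. Positivity $\mtx{Y}^{(R)} \psdge \mtx{0}$ supplies an $R$-independent deterministic majorant:
\[
\trace \econst^{-\theta(\mtx{Y}^{(R)} - \Expect \mtx{Y}^{(R)})}
\leq d \cdot \econst^{\theta \lambda_{\max}(\Expect \mtx{Y}^{(R)} - \mtx{Y}^{(R)})}
\leq d \cdot \econst^{\theta \lambda_{\max}(\Expect \mtx{Y}^{(R)})}
\leq d \cdot \econst^{\theta k \Expect \norm{\mtx{W}}}.
\]
The first step uses $\trace \mtx{A} \leq d \cdot \lambda_{\max}(\mtx{A})$ for psd $\mtx{A}$ together with the spectral mapping theorem; the second step uses $\mtx{Y}^{(R)} \psdge \mtx{0}$; the third uses $\norm{\Expect \mtx{W}^{(R)}} \leq \Expect \norm{\mtx{W}}$. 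Dominated convergence now transfers the bounded-case mgf inequality from $\mtx{W}^{(R)}$ to $\mtx{W}$, establishing~\eqref{eqn:iid-mgf-pf} in full generality.

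\textbf{Main obstacle.} The only delicate element is producing a uniform-in-$R$ dominating function on the left-hand side; positivity of $\mtx{Y}^{(R)}$ together with the trivial operator-norm bound on $\Expect \mtx{Y}^{(R)}$ secures this without any need to control random fluctuations. The right-hand side is handled cleanly by Gaussian monotonicity, which sidesteps any separate argument for weak convergence of $\mtx{Z}^{(R)}$ to $\mtx{Z}$.
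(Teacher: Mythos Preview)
Your proof is correct and follows essentially the same architecture as the paper's: truncate, apply the bounded-case hypothesis, use Gaussian monotonicity (\cref{prop:gauss-monotone}) with convexity of the trace exponential to dominate the right-hand side uniformly in $R$, and invoke dominated/bounded convergence on the left using positivity of $\mtx{Y}^{(R)}$ together with $\Expect \mtx{Y}^{(R)} \psdle \Expect \mtx{Y}$ to build an $R$-free majorant. The only cosmetic difference is that the paper obtains the majorant $\trace \econst^{\theta \Expect \mtx{Y}}$ directly from Loewner monotonicity of the trace exponential, whereas you route through $\lambda_{\max}$ to reach the scalar bound $d \cdot \econst^{\theta k \Expect \norm{\mtx{W}}}$; both are equivalent for this purpose.
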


\begin{proof}
Suppose that $\Expect \norm{\mtx{W}}^2 < + \infty$.
Let $R > 0$ be a truncation parameter.
We will consider the random matrix models arising from
the truncated summand: $\mtx{W} \indicator\{ \norm{\mtx{W}} \leq R \}$.

For iid summands $\mtx{W}_j \sim \mtx{W}$, define the coupled random matrix models
\[
\mtx{Y} \coloneqq \sum_{j=1}^k \mtx{W}_j
\quad\text{and}\quad
\mtx{Y}_{\wedge R} \coloneqq \sum_{j=1}^k \mtx{W}_j \indicator\{ \norm{\mtx{W}_j} \leq R \}.
\]
Since the trace exponential is monotone with respect to the psd order~\cite[Thm.~2.10]{Car10:Trace-Inequalities},
\[
\trace \econst^{-\theta (\mtx{Y}_{\wedge R} - \Expect \mtx{Y}_{\wedge R})}
	\leq \trace \econst^{\theta \Expect \mtx{Y} } < + \infty
	\quad\text{pointwise and for all $R > 0$.}
\]
We have used the semidefinite relations $\mtx{0} \psdle \smash{\mtx{Y}_{\wedge R}}$ and $\Expect \smash{\mtx{Y}_{\wedge R}} \psdle \Expect \mtx{Y}$, where $\psdle$ is the Loewner order.
Bounded convergence yields
\[
\lim\nolimits_{R \to \infty} \Expect \trace\econst^{-\theta (\mtx{Y}_{\wedge R} - \Expect \mtx{Y}_{\wedge R})}
	= \Expect \trace \econst^{-\theta (\mtx{Y} - \Expect \mtx{Y})}.
\]
Indeed, $\smash{\mtx{Y}_{\wedge R}} \to \mtx{Y}$ pointwise, and so $\Expect \smash{\mtx{Y}_{\wedge R}} \to \Expect \mtx{Y}$.
The limit of the expectation is confirmed by applying monotone convergence to the quadratic
forms induced by the random psd matrices.

As for the Gaussian model, define the variance functions for an argument $\mtx{M} \in \Sym_d$:
\[
\mathsf{V}(\mtx{M}) \coloneqq k \cdot \Expect\big[ \abssq{\ip{\mtx{M}}{\mtx{W}}} \big] %
\quad\text{and}\quad
\mathsf{V}_{\wedge R}(\mtx{M}) \coloneqq k \cdot \Expect\big[ \abssq{\ip{\mtx{M}}{\mtx{W}}} \cdot \indicator\{ \norm{\mtx{W}} \leq R \} \big].
\]
It is easy to see that $\smash{\mathsf{V}_{\wedge R}}(\mtx{M}) \leq \mathsf{V}(\mtx{M})$.
Therefore, the Gaussian random matrices $\smash{\mtx{Z}_{\wedge R}} \sim \normal(\mtx{0}, \smash{\mathsf{V}_{\wedge R}})$
and $\mtx{Z} \sim \normal(\mtx{0}, \mathsf{V})$ satisfy %
\[
\Expect \trace \econst^{-\theta \mtx{Z}_{\wedge R}} \leq \Expect \trace \econst^{-\theta \mtx{Z}}
\quad\text{for all $R > 0$.}
\]
This statement follows from monotonicity (\cref{prop:gauss-monotone}) for the expectation
of a convex function of a Gaussian.  Convexity of the trace exponential is an easy classical
fact~\cite[Thm.~2.10]{Car10:Trace-Inequalities}, which is also contained in Stahl's
theorem (\cref{fact:bmv}).

We conclude that
\begin{align*}
\Expect \trace \econst^{-\theta (\mtx{Y} - \Expect \mtx{Y})}
	&= \limsup\nolimits_{R \to \infty} \Expect \trace \econst^{-\theta (\mtx{Y}_{\wedge R} - \Expect \mtx{Y}_{\wedge R})} \\
	&\leq 2\limsup\nolimits_{R \to \infty} \Expect \trace \econst^{- \theta \mtx{Z}_{\wedge R}}
	\leq 2 \Expect \trace \econst^{-\theta \mtx{Z}}.
\end{align*}
The inequality in the last display depends on the hypothesis of the lemma,
namely that the relation~\eqref{eqn:iid-mgf-pf} holds when $\norm{\mtx{W}}$
is uniformly bounded.
\end{proof}

\subsection{Technical Step 7: Convergence of the empirical model}

Next, we must verify that the empirical approximations $\smash{\widehat{\mtx{Y}}_n}$
converge weakly to the original random matrix model $\mtx{Y}$.

\begin{lemma}[Empirical approximation: Weak convergence] \label{lem:empirical-weak}
Assume that the random summand has two finite moments: $\Expect \norm{\mtx{W}}^2 < + \infty$.  Define random
matrices $\mtx{Y}$ and $\smash{\widehat{\mtx{Y}}_n}$ as in~\eqref{eqn:iid-sum} and~\eqref{eqn:proxy-multi}.

\begin{enumerate}
\item	For each bounded, Lipschitz function $h : \Sym_d \to \R$,
\begin{equation} \label{eqn:empirical-weak}
\Expect h\big( \widehat{\mtx{Y}}_n - \Expect\big[ \widehat{\mtx{Y}}_n \condbar \coll{A}_n \big] \big)
	\to \Expect h(\mtx{Y} - \Expect \mtx{Y})
	\quad\text{as $n \to \infty$.}
\end{equation}

\item	If $\norm{\mtx{W}}$ is uniformly bounded, the limit~\eqref{eqn:empirical-weak} also
holds for $h(\mtx{M}) \coloneqq \trace \econst^{-\theta \mtx{M}}$ with $\theta \in \R$.
\end{enumerate}

\noindent
Lipschitz functions are defined with respect to the Frobenius norm on self-adjoint matrices.
The expectations average over everything, including the random sample $\coll{A}_n$.
\end{lemma}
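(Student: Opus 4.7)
The strategy is to insert the deterministic centering $\mu \coloneqq \Expect \mtx{Y} = k \Expect \mtx{W}$ as a pivot between the empirical process $\widehat{\mtx{Y}}_n - \widehat{\mtx{\mu}}_n$, with $\widehat{\mtx{\mu}}_n \coloneqq \Expect[\widehat{\mtx{Y}}_n \condbar \coll{A}_n] = (k/n) \sum_{i=1}^n \mtx{A}_i$, and the target process $\mtx{Y} - \mu$.  The triangle inequality splits the quantity in~\eqref{eqn:empirical-weak} into a \emph{bias} contribution $\abs{\Expect h(\widehat{\mtx{Y}}_n - \widehat{\mtx{\mu}}_n) - \Expect h(\widehat{\mtx{Y}}_n - \mu)}$, which I control through Lipschitz continuity, and a \emph{distributional} contribution $\abs{\Expect h(\widehat{\mtx{Y}}_n - \mu) - \Expect h(\mtx{Y} - \mu)}$, which I control by exhibiting a high-probability event on which $\widehat{\mtx{Y}}_n$ exactly inherits the law of $\mtx{Y}$.

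To bound the bias contribution, Lipschitz continuity of $h$ with constant $L$ gives
\[
\abs{ \Expect h(\widehat{\mtx{Y}}_n - \widehat{\mtx{\mu}}_n) - \Expect h(\widehat{\mtx{Y}}_n - \mu) } \leq L \cdot \Expect \fnorm{\widehat{\mtx{\mu}}_n - \mu}.
\]
Since $\widehat{\mtx{\mu}}_n/k$ is the empirical average of the iid matrices $\mtx{A}_1, \dots, \mtx{A}_n$, and the hypothesis $\Expect \norm{\mtx{W}}^2 < +\infty$ implies $\Expect \norm{\mtx{W}} < + \infty$, the $L^1$ law of large numbers applied entrywise yields $\Expect \fnorm{\widehat{\mtx{\mu}}_n - \mu} \to 0$.

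To handle the distributional contribution, represent $\widehat{\mtx{Y}}_n = \sum_{j=1}^k \mtx{A}_{I_j}$, where $I_1, \dots, I_k$ are iid $\uniform\{1,\dots,n\}$ and independent of the sample $\coll{A}_n$. Let $E_n$ be the event that $I_1, \dots, I_k$ are pairwise distinct. Because the indices are independent of $\coll{A}_n$ and the $\mtx{A}_i$ are iid copies of $\mtx{W}$, the conditional law of $(\mtx{A}_{I_1}, \dots, \mtx{A}_{I_k})$ given $E_n$ coincides with the joint law of $k$ iid copies of $\mtx{W}$; hence the conditional law of $\widehat{\mtx{Y}}_n$ given $E_n$ is the law of $\mtx{Y}$. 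With $h$ bounded and $\Prob{E_n^c} \leq \binom{k}{2}/n \to 0$, the decomposition
\[
\Expect h(\widehat{\mtx{Y}}_n - \mu) = \Prob{E_n} \cdot \Expect h(\mtx{Y} - \mu) + \Expect\big[ h(\widehat{\mtx{Y}}_n - \mu) \cdot \indicator\{E_n^c\} \big]
\]
converges to $\Expect h(\mtx{Y} - \mu)$.  Combined with the bias bound, this proves part~(1).

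For part~(2), the assumption $\norm{\mtx{W}} \leq R$ almost surely forces both $\widehat{\mtx{Y}}_n - \widehat{\mtx{\mu}}_n$ and $\mtx{Y} - \mu$ to lie in the operator-norm ball of radius $2kR$, on which $h_\theta(\mtx{M}) \coloneqq \trace \econst^{-\theta \mtx{M}}$ is bounded and Lipschitz.  Multiplying $h_\theta$ by a smooth cutoff that equals one on this ball produces a bounded Lipschitz function on all of $\Sym_d$ agreeing with $h_\theta$ on the almost-sure supports of both processes, so part~(1) applied to the cutoff delivers the claim.  The main obstacle is the coupling between $\widehat{\mtx{Y}}_n$ and $\widehat{\mtx{\mu}}_n$ through the shared sample $\coll{A}_n$, which precludes a naive weak-convergence argument for the centered process directly; the Lipschitz pivot is what converts this coupling into a separate, routine $L^1$ law-of-large-numbers question for the sample mean.
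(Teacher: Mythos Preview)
Your proof is correct and follows essentially the same route as the paper: pivot on the deterministic center $\mu = \Expect \mtx{Y}$, use Lipschitz continuity to absorb the difference between $\widehat{\mtx{\mu}}_n$ and $\mu$ via a law of large numbers for the sample mean, and handle the remaining term by conditioning on the event that the $k$ sampled indices are distinct (the paper calls this $\set{D}_n$ and bounds its complement by the birthday paradox). The only cosmetic differences are that the paper computes the $L^2$ deviation $\Expect\fnormsq{\widehat{\mtx{\mu}}_n - \mu} = (k^2/n)\,\Expect\fnormsq{\mtx{W} - \Expect\mtx{W}}$ explicitly rather than invoking an $L^1$ law, and for part~(2) it simply observes that the trace exponential is bounded and Lipschitz on the common support rather than formalizing this with a cutoff.
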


\begin{proof}
To begin, let us explore some properties of the empirical approximation $\smash{\widehat{\mtx{Y}}_n}$.
The representation~\eqref{eqn:proxy-multi} shows that
\[
\widehat{\mtx{Y}}_n = \sum_{i=1}^n \delta_i^{(n)} \mtx{A}_i
\quad\text{where $\vct{\delta}^{(n)} \sim \multinomial(k, n)$.}
\]
The multinomial coefficients $\smash{\vct{\delta}^{(n)}} \coloneqq \big(\delta^{(n)}_1, \dots, \delta^{(n)}_n \big)$
are \hilite{independent} from the sample $\coll{A}_n$.  Define the
event $\set{D}_n$ where the multinomial selects $k$ distinct summands:
\[
\set{D}_n \coloneqq \big\{ \# \supp\big(\vct{\delta}^{(n)}\big) = k \big\}.
\]
For large sample size $n$, it is likely that $\set{D}_n$ occurs.
By the birthday paradox argument~\cite[Sec.~5.1]{MU17:Probability-Computing-2ed},
\begin{equation} \label{eqn:birthday}
\Probe(\set{D}_n) = \prod_{j=1}^k \left(1 - \frac{j-1}{n}\right)
	\geq 1 - \sum_{j=1}^k \frac{j-1}{n}
	> 1 - \frac{k^2}{n}.
\end{equation}
Conditional on the event $\set{D}_n$ occurring, the distribution of the empirical approximation
$\smash{\widehat{\mtx{Y}}_n}$ is the same as the distribution $\mtx{Y}$.  More precisely,
for each Borel set $\set{B} \subseteq \Sym_d$,
\[
\Prob{ \widehat{\mtx{Y}}_n \in \set{B} \lcondbar \set{D}_n }
= \Prob{ \sum_{i \in \supp(\vct{\delta}^{(n)})} \mtx{A}_i \in \set{B} \lcondbar \set{D}_n }
= \Prob{ \sum_{j=1}^k \mtx{W}_j \in \set{B} } = \Prob{ \mtx{Y} \in \set{B} }.
\]
Indeed, each sample $\mtx{A}_i$ is an independent draw from the distribution
$\mtx{W}$, as are the random matrices $\mtx{W}_1, \dots, \mtx{W}_k$.  This argument
formalizes the intuition that the empirical approximation is a good proxy for
the original random matrix.

Next, we turn to the conditional expectation of the empirical approximation.
Since each coefficient $\delta_i \sim \binomial(1/n, k)$,
\[
\Expect\big[ \widehat{\mtx{Y}}_n \condbar \coll{A}_n \big]
	= \frac{k}{n} \sum_{i=1}^n \mtx{A}_i.
\]
Each sample $\mtx{A}_i$ is an independent copy of $\mtx{W}$, so its
expectation satisfies $\Expect \mtx{A}_i = \Expect \mtx{W} = k^{-1} \Expect \mtx{Y}$.
By orthogonality,
\begin{equation} \label{eqn:proxy-mean}
\Expect \lnorm{ \Expect \big[ \widehat{\mtx{Y}}_n \condbar \coll{A}_n \big] - \Expect \mtx{Y} }_{\mathrm{F}}^2
	= \Expect \lnorm{ \frac{k}{n} \sum_{i=1}^n (\mtx{A}_i - \Expect \mtx{A}_i) }_{\mathrm{F}}^2
	= \frac{k^2}{n} \Expect \norm{ \mtx{W} - \Expect \mtx{W} }_{\mathrm{F}}^2
	\to 0.
\end{equation}
The limit occurs as $n \to \infty$, while $k$ is a fixed number.  We have used
the fact that $\mtx{W}$ has two moments.

Now, suppose that $h$ has bounded Lipschitz norm $L$.  In other words, both the uniform norm
of $h$ and the Lipschitz constant of $h$ are at most $L$.  Define the sequence of moments
\[
E_n \coloneqq \Expect h\big( \widehat{\mtx{Y}}_n - \Expect \big[ \widehat{\mtx{Y}}_n \condbar \coll{A}_n \big] \big).
\]
Add and subtract the matrix $\Expect \mtx{Y}$, and invoke the Lipschitz property:
\[
E_n = \Expect h\big( \widehat{\mtx{Y}}_n - \Expect \mtx{Y} \big)
	\pm L \cdot \Expect \lnorm{ \Expect\big[\widehat{\mtx{Y}}_n \condbar \coll{A}_n \big] - \Expect \mtx{Y} }_{\mathrm{F}}.
\]
The notation $x = a \pm b$ is shorthand for the pair of inequalities $a - b \leq x \leq a + b$.
From~\eqref{eqn:proxy-mean}, we see that the second term on the right-hand side of the last display tends to zero.
As for the first term, we compute the expectation by conditioning on the event $\set{D}_n$:
\begin{align*}
\Expect h\big(\widehat{\mtx{Y}}_n - \Expect \mtx{Y} \big)
	&= \Expect\big[ h\big( \widehat{\mtx{Y}}_n - \Expect \mtx{Y} \big) \lcondbar \set{D}_n \big] \cdot \Probe(\set{D}_n)
	+ \Expect\big[ h\big( \widehat{\mtx{Y}}_n - \Expect \mtx{Y} \big) \lcondbar \set{D}_n^\comp \big] \cdot \Probe(\set{D}_n^\comp) \\
	&= \Expect\big[ h(\mtx{Y} - \Expect \mtx{Y}) \big] \pm \frac{2k^2 L}{n}
	\to \Expect h(\mtx{Y} - \Expect \mtx{Y}).
\end{align*}
Indeed, the conditional distribution $\smash{\widehat{\mtx{Y}}_n} \condbar \set{D}_n \sim \mtx{Y}$.
The inequalities depend on two applications of the probability estimate~\eqref{eqn:birthday}
and the fact that the magnitude of $h$ is uniformly bounded by $L$.  Altogether,
\[
E_n = \Expect h \big( \widehat{\mtx{Y}}_n - \Expect\big[ \widehat{\mtx{Y}}_n \condbar \coll{A}_n \big] \big)
	\to \Expect h(\mtx{Y} - \Expect \mtx{Y})
	\quad\text{as $n \to \infty$.}
\]
We have established the weak convergence claim.

Last, assume that the random summand satisfies $\norm{\mtx{W}} \leq R$
for some $R > 0$.
In this case, the random matrices $\smash{\widehat{\mtx{Y}}_n}$ and $\mtx{Y}$
are all bounded in norm by $kR$.  The trace exponential function
$h(\mtx{M}) = \trace \econst^{-\theta \mtx{M}}$ is bounded and
Lipschitz on the common support of these random matrices.
Therefore, the weak convergence result~\eqref{eqn:empirical-weak}
ensures that
\[
\Expect \trace \econst^{-\theta (\widehat{\mtx{Y}}_n - \Expect [\widehat{\mtx{Y}}_n \condbar \coll{A}_n])}
	\to \Expect \trace \econst^{-\theta (\mtx{Y} - \Expect \mtx{Y})}
	\quad\text{as $n \to \infty$.}
\]
This is the second conclusion.
\end{proof}

\subsection{Technical Step 8: Convergence of the Gaussian model}

Finally, we must argue that the sequence \smash{$\mtx{Z}_n$} of Gaussian models
converges weakly to the target Gaussian distribution $\mtx{Z}$.
The proof relies on characteristic functions.

\begin{lemma}[Gaussian model: Weak convergence] \label{lem:gauss-weak}
Assume that the random summand has two finite moments: $\Expect \norm{\mtx{W}}^2 < + \infty$.
Define Gaussian matrices $\mtx{Z}$ and $\mtx{Z}_n$ as in~\eqref{eqn:iid-gauss}
and~\eqref{eqn:proxy-gauss}.

\begin{enumerate}
\item	For each bounded Lipschitz function $h : \Sym_d \to \R$,
\begin{equation} \label{eqn:gauss-weak}
\Expect h(\mtx{Z}_n) \to \Expect h(\mtx{Z})
\quad\text{as $n \to \infty$.}
\end{equation}

\item	If $\norm{\mtx{W}}$ is uniformly bounded, the limit~\eqref{eqn:gauss-weak} also holds
for $h(\mtx{M}) \coloneqq \trace \econst^{-\theta \mtx{M}}$ with $\theta \in \R$.
\end{enumerate}

\noindent
The expectation averages over everything, including the random sample $\coll{A}_n$.
\end{lemma}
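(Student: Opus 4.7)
The plan is to reduce the weak-convergence statement for $\mtx{Z}_n$ to a pointwise convergence of its variance functions, then push the conclusion through the outer expectation over the sample $\coll{A}_n$ using bounded/dominated convergence. Throughout, I condition on $\coll{A}_n$: given the sample, $\mtx{Z}_n$ is a centered Gaussian on $\Sym_d$ with variance function $\mathsf{V}_n$, while $\mtx{Z}$ is a centered Gaussian with the fixed variance function $\mathsf{V} = k \cdot \Mo[\mtx{W}]$ that does not depend on the sample.

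The first step is a law-of-large-numbers argument: for each fixed $\mtx{M} \in \Sym_d$, the bound $\abssq{\ip{\mtx{M}}{\mtx{W}}} \leq d \fnormsq{\mtx{M}} \cdot \normsq{\mtx{W}}$ together with $\Expect \normsq{\mtx{W}} < +\infty$ makes $\abssq{\ip{\mtx{M}}{\mtx{A}_i}}$ an integrable iid sequence, so $\mathsf{V}_n(\mtx{M}) \to \mathsf{V}(\mtx{M})$ almost surely. Using a countable dense subset of $\Sym_d$ and the fact that the $\mathsf{V}_n$ are uniformly bounded quadratic forms on a fixed finite-dimensional space, a single null set suffices, and $\mathsf{V}_n \to \mathsf{V}$ pointwise on $\Sym_d$ almost surely in $\coll{A}_n$. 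The conditional characteristic function of $\mtx{Z}_n$ at $\mtx{M}$ equals $\exp(-\mathsf{V}_n(\mtx{M})/2)$, so L\'evy's continuity theorem delivers conditional weak convergence $\mtx{Z}_n \weakto \mtx{Z}$ almost surely.

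For part (1), a bounded Lipschitz function $h$ is bounded and continuous, so the portmanteau theorem yields $\Expect[h(\mtx{Z}_n) \condbar \coll{A}_n] \to \Expect h(\mtx{Z})$ almost surely; since the conditional expectations are uniformly bounded by $\norm{h}_\infty$, bounded convergence lifts this to $\Expect h(\mtx{Z}_n) \to \Expect h(\mtx{Z})$. For part (2), the map $\mtx{M} \mapsto \trace \econst^{-\theta \mtx{M}}$ is continuous but unbounded, so I would supplement weak convergence with uniform integrability. Under the hypothesis $\norm{\mtx{W}} \leq R$ almost surely, each $\mtx{A}_i$ satisfies $\norm{\mtx{A}_i} \leq R$, and the conditional matrix variance of $\mtx{Z}_n$ admits the deterministic bound
\[
\sigma^2(\mtx{Z}_n) = \lnorm{ (1+k/n)(k/n) \sum_{i=1}^n \mtx{A}_i^2 } \leq (1+k/n) k R^2 \leq 2 k R^2.
\]
Gaussian concentration for the Lipschitz function $\norm{\cdot}$ (\cref{fact:gauss-lip}) then gives subgaussian tails for $\norm{\mtx{Z}_n}$ with uniform parameters, so $\Expect[ \econst^{\theta' \norm{\mtx{Z}_n}} \condbar \coll{A}_n]$ is dominated, for each $\theta' > 0$, by a deterministic constant depending only on $\theta'$, $R$, and $k$. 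The pointwise estimate $\trace \econst^{-\theta \mtx{Z}_n} \leq d \cdot \econst^{\abs{\theta} \norm{\mtx{Z}_n}}$ supplies the uniform integrability needed to upgrade conditional weak convergence to convergence of conditional expectations, and the same deterministic upper envelope permits bounded convergence across the outer average over $\coll{A}_n$.

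The main obstacle is the uniform integrability in part (2); the weak-convergence step is essentially automatic once the Gaussian structure is exposed. The boundedness hypothesis on $\mtx{W}$ is exactly what is needed to keep the matrix variance of $\mtx{Z}_n$ bounded uniformly in $n$, and hence to obtain Gaussian tails for $\norm{\mtx{Z}_n}$ that do not degenerate as the empirical approximation refines; without that hypothesis (only $\Expect\normsq{\mtx{W}} < +\infty$) the conditional matrix variance can fluctuate and one would have to argue by a separate truncation, which is precisely why the second claim in the statement restricts to bounded $\mtx{W}$.
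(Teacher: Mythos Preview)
Your proposal is correct and follows essentially the same route as the paper: the strong law gives $\mathsf{V}_n(\mtx{M}) \to \mathsf{V}(\mtx{M})$, L\'evy's continuity theorem via the Gaussian characteristic function $\econst^{-\mathsf{V}_n(\mtx{M})/2}$ yields weak convergence, and boundedness of $\mtx{W}$ furnishes the uniform integrability needed for part~(2). The only technical variation is in part~(2): the paper bounds the variance function $\mathsf{V}_n(\mtx{M}) \leq 2kR^2 d\,\fnormsq{\mtx{M}}$ and then dominates $\mtx{Z}_n$ by a single fixed Gaussian $\mtx{G}$ via Gaussian monotonicity (\cref{prop:gauss-monotone}), whereas you bound $\sigma^2(\mtx{Z}_n) \leq 2kR^2$ and invoke Gaussian Lipschitz concentration directly on each $\mtx{Z}_n$; both devices produce the same uniform exponential-moment control.
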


\begin{proof}
Consider a self-adjoint Gaussian matrix $\mtx{G} \sim \normal(\mtx{0}, \set{C})$.
For a self-adjoint %
argument $\mtx{M} \in \Sym_d$, the characteristic function $\chi_{\mtx{G}}$
of the Gaussian $\mtx{G}$ takes the form
\[
\chi_{\mtx{G}}(\mtx{M}) \coloneqq \Expect \econst^{\iunit \ip{\mtx{M}}{\mtx{G}}}
	= \econst^{ - \mathsf{C}(\mtx{M})/2 }.
\]
This just reinterprets the usual formula for the
Gaussian characteristic function~\cite[Sec.~9.5]{Dud02:Real-Analysis}. %

Now, the centered Gaussian matrices $\mtx{Z}_n$ and $\mtx{Z}$ have covariance
functions
\[
\mathsf{V}_n(\mtx{M}) \coloneqq \left(1 + \frac{k}{n} \right)\frac{k}{n} \sum_{i=1}^n \abssq{\ip{\mtx{M}}{\mtx{A}_i}}
\quad\text{and}\quad
\mathsf{V}(\mtx{M}) \coloneqq k \cdot \Expect \abssq{\ip{\mtx{M}}{\mtx{W}} }.
\]
Since $\mathsf{V}_n$ is a random operator that depends on the sample $\coll{A}_n$,
the random matrix $\mtx{Z}_n$ is actually a Gaussian mixture.  Each random sample
$\mtx{A}_i$ is an independent copy of $\mtx{W}$.  Therefore,
\begin{equation} \label{eqn:cov-lln}
\mathsf{V}_n(\mtx{M}) = \left(1 + \frac{k}{n}\right) \frac{k}{n} \sum_{i=1}^n \abssq{\ip{\mtx{M}}{\mtx{A}_i}}
	\to k \cdot \Expect \abssq{\ip{\mtx{M}}{\mtx{W}} }
	\quad\text{almost surely.}
\end{equation}
The formula~\eqref{eqn:cov-lln} is just the strong law of
large numbers~\cite[Thm.~8.3.5]{Dud02:Real-Analysis}. %
The statement is justified by the fact that $\Expect \norm{ \mtx{W} }^2$ is finite.

Recall that a sequence of random matrices converges weakly if and only if
the characteristic functions converge pointwise to a limit that is continuous
at the origin~\cite[Thm.~9.8.2]{Dud02:Real-Analysis}.
Therefore, to prove the weak convergence statement~\eqref{eqn:gauss-weak},
it suffices to verify that
\[
\chi_{\mtx{Z}_n}(\mtx{M}) \to \chi_{\mtx{Z}}(\mtx{M})
	\quad\text{for each $\mtx{M} \in \Sym_d$.}
\]
Equivalently, we can obtain weak convergence from the limit
\[
\Expect \econst^{ - \mathsf{V}_n(\mtx{M}) / 2} 
	\to \econst^{ - \mathsf{V}(\mtx{M}) / 2}
	\quad\text{for each $\mtx{M} \in \Sym_d$.}
\]
But this statement follows instantly from bounded convergence and the
almost sure limit~\eqref{eqn:cov-lln}.  Indeed, variance functions are
positive, so the exponentials are uniformly bounded by one.

Last, assume that the random summand satisfies $\norm{\mtx{W}} \leq R$ for some $R > 0$.
We must upgrade the weak convergence~\eqref{eqn:gauss-weak} to convergence
for the trace mgf function $h(\mtx{M}) \coloneqq \trace \econst^{-\theta \mtx{M}}$
with $\theta \in\R$.
This step requires asymptotic uniform integrability~\cite[Thm.~2.20]{vdV98:Asymptotic-Statistics}.
We \hilite{claim} that
\begin{equation} \label{eqn:gauss-aui}
\lim\nolimits_{B \to \infty} \limsup\nolimits_{n \to \infty}
	\Expect\big[ \trace \econst^{-\theta \mtx{Z}_n} \indicator\{ \norm{\mtx{Z}_n} \geq B \} \big]
	\to 0.
\end{equation}
Granted~\eqref{eqn:gauss-aui}, we obtain the limit
$\Expect h(\mtx{Z}_n) \to \Expect h(\mtx{Z})$, and the proof is complete.

Let us establish the claim.  Since $\norm{\mtx{W}} \leq R$,
the covariance operators $\mathsf{V}_n$ are bounded. %
More precisely, for $n \geq k$, since $\norm{\mtx{A}_i} \leq R$ as well,
\[
\mathsf{V}_n(\mtx{M})
	= \left(1 + \frac{k}{n}\right) \frac{k}{n} \sum_{i=1}^n \abssq{\ip{\mtx{M}}{\mtx{A}_i}}
	\leq 2k R^2 d \fnormsq{\mtx{M}} \eqqcolon \mathsf{C}(\mtx{M}). %
\]
Define the Gaussian random matrix $\mtx{G} \sim \normal(\mtx{0}, \mathsf{C})$.
By monotonicity (\cref{prop:gauss-monotone}), applied conditionally on $\coll{A}_n$, 
the comparison $\Expect f(\mtx{Z}_n) \leq \Expect f(\mtx{G})$ holds
for each convex function $f : \Sym_d \to \R$.

To confirm asymptotic uniform integrability~\eqref{eqn:gauss-aui}, make a simple bound on the trace exponential.
Then apply the inequalities of Cauchy--Schwarz and Markov to the expectation in~\eqref{eqn:gauss-aui} to obtain
\begin{align*}
	\Expect\big[ \trace \econst^{-\theta \mtx{Z}_n } \indicator\{ \norm{\mtx{Z}_n} \geq B \} \big]
	&\leq d \cdot \Expect\big[ \econst^{\abs{\theta} \, \norm{ \mtx{Z}_n} } \indicator\{ \norm{\mtx{Z}_n} \geq B \} \big] \\
	&\leq d \cdot \left( \Expect \econst^{2 \abs{\theta} \, \norm{\mtx{Z}_n}} \right)^{1/2} (B^{-1} \Expect \norm{\mtx{Z}_n})^{1/2} \\
	&\leq d \cdot \left( \Expect \econst^{2 \abs{\theta} \, \norm{\mtx{G}}} \right)^{1/2} (B^{-1} \Expect \norm{\mtx{G}})^{1/2}.
\end{align*}
The last inequality follows from the comparison of $\mtx{Z}_n$ with $\mtx{G}$.
Since $\mtx{G}$ is a fixed Gaussian matrix, the two expectations with respect to $\mtx{G}$ are finite,
and the asymptotic uniform integrability condition~\eqref{eqn:gauss-aui} is valid.  
\end{proof}

\subsection{Extension: Polynomial moments}

The proof of the trace mgf comparison~\eqref{eqn:iid-mgf-pf}
can be adapted to obtain a comparison theorem for polynomial moments.

\begin{proposition}[iid psd sum: Polynomial moment bound] \label{prop:iid-poly}
Instate the hypotheses of \cref{thm:iid-sum}.  For each $p \geq 4$,
\[
\Expect \trace{} (\mtx{Y} - \Expect \mtx{Y} + \mtx{\Delta})_-^p
	\leq 2 \Expect \trace{} (\mtx{Z} + \mtx{\Delta})_-^p.
\]
\end{proposition}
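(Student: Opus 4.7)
The proof will follow the same five-step architecture used for the trace mgf bound in \cref{thm:iid-sum}, with two key structural substitutions. First, in place of \cref{prop:psd-weight-mgf} I will invoke its polynomial analog \cref{prop:psd-weights-poly}, which gives the randomly weighted comparison $\Expect \trace(\mtx{X}-\Expect\mtx{X}+\mtx{\Delta})_-^p \leq \Expect\trace(\mtx{Z}+\mtx{\Delta})_-^p$ when $p \geq 4$. Second, in place of the Poissonization \cref{lem:poisson} I will use the variant described in \cref{rem:poisson}, which yields the factor-of-two comparison for any positive, monotone trace function of a psd sum. The function $\mtx{M} \mapsto \trace(\mtx{M})_-^p$ qualifies because $m \mapsto (m)_-^p$ is positive and monotone decreasing on $\R$, so increasing a summand in the psd order can only shrink the value.

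The plan proceeds as follows. Fix the sample $\coll{A}_n$ and construct the empirical approximation $\widehat{\mtx{Y}}_n$ as in~\eqref{eqn:proxy-multi}, its Poissonized sibling $\mtx{X}_n$ as in~\eqref{eqn:proxy-poisson}, and the conditionally Gaussian proxy $\mtx{Z}_n \sim \normal(\mtx{0},\mathsf{V}_n)$ as in~\eqref{eqn:proxy-gauss}. Applying the Poissonization remark conditional on $\coll{A}_n$ to the trace function $\mtx{M} \mapsto \trace(\mtx{M}+\mtx{\Delta})_-^p$, followed by \cref{prop:psd-weights-poly} conditional on $\coll{A}_n$, yields the conditional pre-limit estimate
\[
\Expect\bigl[\trace(\widehat{\mtx{Y}}_n - \Expect[\widehat{\mtx{Y}}_n\condbar\coll{A}_n] + \mtx{\Delta})_-^p\lcondbar\coll{A}_n\bigr] \leq 2\Expect\bigl[\trace(\mtx{Z}_n+\mtx{\Delta})_-^p\lcondbar\coll{A}_n\bigr].
\]
Taking an outer expectation and letting $n \to \infty$ should then recover the desired inequality, once convergence on both sides is justified.

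The main obstacle lies in the limiting Step 5: the function $h(\mtx{M}) = \trace(\mtx{M}+\mtx{\Delta})_-^p$ is neither bounded nor Lipschitz, so \cref{lem:empirical-weak,lem:gauss-weak} cannot be applied off the shelf. I would handle this in two phases, adapting the truncation argument of \cref{lem:truncation}. First, replace $\mtx{W}$ by its truncation $\mtx{W}\indicator\{\norm{\mtx{W}}\leq R\}$. On the psd side, $\mtx{Y}_{\wedge R} \psdle \mtx{Y}$, while $\Expect \mtx{Y}_{\wedge R} \psdle \Expect\mtx{Y}$; since $(\cdot)_-^p$ is monotone with respect to the psd order on self-adjoint matrices~\cite{Car10:Trace-Inequalities} and $\trace$ is additive, I would combine these with bounded/monotone convergence to send $R\to\infty$. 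On the Gaussian side, $\mathsf{V}_{\wedge R} \leq \mathsf{V}$ pointwise, and convexity of $\mtx{M}\mapsto \trace(\mtx{M}+\mtx{\Delta})_-^p$ lets me apply Gaussian monotonicity (\cref{prop:gauss-monotone}) to dominate $\Expect \trace(\mtx{Z}_{\wedge R}+\mtx{\Delta})_-^p \leq \Expect\trace(\mtx{Z}+\mtx{\Delta})_-^p$. This reduces matters to the bounded case $\norm{\mtx{W}} \leq R$.

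In the bounded regime, the random matrices $\widehat{\mtx{Y}}_n$ are uniformly bounded in operator norm by $kR$, so $h$ restricted to a ball is bounded and Lipschitz, and the weak convergence conclusion~\eqref{eqn:empirical-weak} of \cref{lem:empirical-weak} applies directly to $h$. For the Gaussian side, the covariance bound $\mathsf{V}_n(\mtx{M}) \leq 2kR^2 d\fnormsq{\mtx{M}}$ from the proof of \cref{lem:gauss-weak} gives a uniform Gaussian dominator $\mtx{G}$ via \cref{prop:gauss-monotone}, and since $\Expect \trace(\mtx{G}+\mtx{\Delta})_-^{2p}$ is finite, Cauchy--Schwarz and Markov furnish the asymptotic uniform integrability of $\trace(\mtx{Z}_n+\mtx{\Delta})_-^p$ needed to upgrade the weak convergence of $\mtx{Z}_n \Rightarrow \mtx{Z}$ to convergence of the $p$-th negative-part moments. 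Chaining these limits through the pre-limit inequality completes the proof.
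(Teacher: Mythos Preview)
Your proposal is correct and follows essentially the same route as the paper's own proof sketch: substitute \cref{prop:psd-weights-poly} for \cref{prop:psd-weight-mgf}, invoke the Poissonization variant from \cref{rem:poisson} for the positive, monotone trace function $\mtx{M}\mapsto\trace(\mtx{M})_-^p$, and then adapt the truncation and weak-convergence lemmas (\cref{lem:truncation,lem:empirical-weak,lem:gauss-weak}) to the polynomial setting. Your handling of the technical limiting steps---the uniform deterministic dominator $\trace(-\Expect\mtx{Y}+\mtx{\Delta})_-^p$ on the psd side and asymptotic uniform integrability via the Gaussian dominator $\mtx{G}$ on the other---fills in exactly the ``minor technical differences'' the paper alludes to.
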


\begin{proof}[Proof sketch] %
Note that the trace function $f(w) \coloneqq \trace{} (w\mtx{A} - \mtx{B})_{-}^p$
is completely monotone of order four when $p \geq 4$.
Therefore, we can activate the Poissonization result (\cref{rem:poisson})
and the polynomial moment bound for weighted sums (\cref{prop:psd-weights-poly}).
These are the main changes, as compared with the proof of~\eqref{eqn:iid-mgf-pf}.
There are also some minor technical differences in
the proofs of \cref{lem:truncation,lem:empirical-weak,lem:gauss-weak}.
\end{proof}

\appendix

\section{Matrix concentration for psd sums}
\label{app:epz}

For completeness, we include a short proof of the matrix concentration
results for the lower tail of a psd sum.  This result was communicated
to the author by Andreas Maurer in 2011, but there does not seem to be
a citation available.

\begin{fact}[Matrix concentration: Exponential Paley--Zygmund] \label{fact:matrix-epz}
Consider an \hilite{independent} family $(\mtx{W}_1, \dots, \mtx{W}_n)$ of random \hilite{psd} matrices
with common dimension $d$ and with two finite moments.
Define the matrix sum and the sum of second moments:
\[
\mtx{Y} \coloneqq \sum_{i=1}^n \mtx{W}_i
\quad\text{and}\quad
L_2 \coloneqq \lnorm{ \sum_{i=1}^n \Expect \mtx{W}_i^2 }.
\]
Then
\begin{equation} \label{eqn:epz-expect}
\Expect \lambda_{\min}(\mtx{Y}) \geq \lambda_{\min}(\Expect \mtx{Y}) - \sqrt{2 L_2 \log d}.
\end{equation}
In addition, for $t \geq 0$,
\begin{equation} \label{eqn:epz-tail}
\Prob{ \lambda_{\min}(\mtx{Y}) \leq \lambda_{\min}(\Expect \mtx{Y}) - t }
	\leq d \cdot \econst^{-t^2 / (2L_2)}.
\end{equation}
\end{fact}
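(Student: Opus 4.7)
The plan is to push the scalar mgf inequality \eqref{eqn:intro-scalar-mgf} through the standard matrix Laplace transform machinery, using Tropp's subadditivity-of-cumulant bound in place of simple exponentiation. The key technical ingredient is an \emph{operator} version of the scalar bound, namely that for a psd random matrix $\mtx{W}$ with two moments,
\[
\Expect \econst^{-\theta \mtx{W}} \psdle \exp\Big( -\theta \Expect[\mtx{W}] + \tfrac{\theta^2}{2} \Expect[\mtx{W}^2]\Big)
\qquad\text{for all $\theta \geq 0$.}
\]
To justify this, I would start from the scalar inequality $\econst^{-x} \leq 1 - x + x^2/2$ for $x \geq 0$, which lifts by the spectral mapping theorem to $\econst^{-\theta \mtx{W}} \psdle \Id - \theta \mtx{W} + \tfrac{\theta^2}{2} \mtx{W}^2$ pointwise (all eigenvalues of $\mtx{W}$ are nonnegative). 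Taking expectations preserves the semidefinite order. Then the spectral inequality $\Id + \mtx{A} \psdle \exp(\mtx{A})$ for self-adjoint $\mtx{A}$ (immediate from $1 + t \leq \econst^t$ applied in an eigenbasis) promotes the affine bound to an exponential bound.

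Next I would invoke the master subadditivity inequality for matrix cumulants from \cite[Lem.~3.4.1]{Tro15:Introduction-Matrix}: for independent self-adjoint random matrices $\mtx{X}_i$,
\[
\Expect \trace \exp\Big( \sum_{i=1}^n \mtx{X}_i \Big)
	\leq \trace \exp\Big( \sum_{i=1}^n \log \Expect \econst^{\mtx{X}_i} \Big).
\]
Applying this with $\mtx{X}_i = -\theta \mtx{W}_i$, then using operator monotonicity of the logarithm together with the matrix mgf bound above, yields
\[
\Expect \trace \econst^{-\theta \mtx{Y}}
	\leq \trace \exp\Big( -\theta \Expect \mtx{Y} + \tfrac{\theta^2}{2} \sum_{i=1}^n \Expect \mtx{W}_i^2 \Big).
\]
A trace-to-$\lambda_{\max}$ bound and Weyl's inequality then give
\[
\Expect \trace \econst^{-\theta \mtx{Y}} \leq d \cdot \exp\Big( -\theta \lambda_{\min}(\Expect \mtx{Y}) + \tfrac{\theta^2}{2} L_2 \Big).
\]

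From here the tail bound \eqref{eqn:epz-tail} is obtained by the matrix Laplace transform method applied to the minimum eigenvalue, \cite[Prop.~3.2.1]{Tro15:Introduction-Matrix}, with the optimal choice $\theta = t / L_2$. The expectation bound \eqref{eqn:epz-expect} follows analogously from \cite[Prop.~3.2.2]{Tro15:Introduction-Matrix} after taking $\theta = \sqrt{2 \log d / L_2}$.

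No step here presents a serious obstacle; the argument is essentially routine once one has the subadditivity bound. The only place that requires a moment of care is verifying the operator version of \eqref{eqn:intro-scalar-mgf}, because operator exponentials are famously uncooperative. That concern evaporates once we observe that both inequalities used in its derivation are applied in a single eigenbasis (of $\mtx{W}$ for the first, of $\Expect \mtx{W} - \tfrac{\theta}{2}\Expect \mtx{W}^2$ for the second), so each reduces to a scalar comparison that is handled by elementary calculus.
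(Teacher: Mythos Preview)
Your proposal is correct and follows essentially the same route as the paper's proof. The only cosmetic differences are that the paper phrases the key operator bound directly as $\log \Expect \econst^{-\theta \mtx{W}} \psdle -\theta \Expect \mtx{W} + \tfrac{\theta^2}{2}\Expect \mtx{W}^2$ via $\log(1+a)\le a$ (rather than your detour through $\Id+\mtx{A}\psdle\econst^{\mtx{A}}$ followed by operator-monotone $\log$), and it centers $\mtx{Y}$ first and applies Weyl's inequality at the end, whereas you keep $-\theta\Expect\mtx{Y}$ inside the trace exponential and use Weyl there; both variants are equivalent.
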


To identify the connection between \cref{fact:matrix-epz} and the Paley--Zygmund
inequality~\cite[Exer.~2.4]{BLM13:Concentration-Inequalities},
select $t = \eps \cdot \lambda_{\min}(\Expect \mtx{Y})$ in the tail bound~\eqref{eqn:epz-tail}.
We obtain a ratio of the squared norm of the first moment to the norm of the sum of second moments.
The proof depends on a trace mgf bound.

\begin{lemma}[Exponential Paley--Zygmund: log-mgf bound] \label{lem:epz-cgf}
Let $\mtx{W}$ be a random psd matrix with two finite moments.  For $\theta \geq 0$,
we have the semidefinite relation
\[
\log \Expect \econst^{-\theta \mtx{W}}
	\psdle \theta (\Expect \mtx{W}) + \tfrac{1}{2} \theta^2 (\Expect \mtx{W}^2).
\]
\end{lemma}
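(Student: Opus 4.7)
The plan is to lift a scalar Taylor-type inequality to the operator level via functional calculus, and then contract it through the operator monotonicity of the logarithm. (I will work out the bound with the sharp sign $-\theta \Expect \mtx{W} + \tfrac{1}{2}\theta^2 \Expect \mtx{W}^2$ on the right, since this is what is needed to deduce \cref{fact:matrix-epz}.)

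First I would establish the pointwise scalar inequality $\econst^{-\theta w} \leq 1 - \theta w + \tfrac{1}{2}\theta^2 w^2$ for all $\theta, w \geq 0$, which follows because the difference and its first derivative vanish at $w = 0$, while its second derivative is nonnegative on $\R_+$. Since $\mtx{W}$ is psd, its spectrum lies in $[0,\infty)$, so functional calculus promotes this to the operator inequality $\econst^{-\theta \mtx{W}} \psdle \Id - \theta \mtx{W} + \tfrac{1}{2}\theta^2 \mtx{W}^2$ sample-wise. Taking expectations, which preserves the semidefinite order, gives $\Expect \econst^{-\theta \mtx{W}} \psdle \Id + \mtx{X}$, where $\mtx{X} \coloneqq -\theta\, \Expect \mtx{W} + \tfrac{1}{2}\theta^2 \Expect \mtx{W}^2$.

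Next I would verify that both sides of this matrix inequality are strictly positive definite, so that the logarithm is well-defined. For the left-hand side this is automatic: $\vct{u}^* \econst^{-\theta \mtx{W}} \vct{u} > 0$ almost surely for every unit $\vct{u}$, so $\Expect \econst^{-\theta \mtx{W}} \succ \mtx{0}$. The chain $\mtx{0} \prec \Expect \econst^{-\theta \mtx{W}} \psdle \Id + \mtx{X}$ then forces $\Id + \mtx{X} \succ \mtx{0}$ as well, with no restriction on $\theta$. Invoking the operator monotonicity of $\log$ on the positive cone (a classical Loewner theorem) yields $\log \Expect \econst^{-\theta \mtx{W}} \psdle \log(\Id + \mtx{X})$. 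Finally I would apply the scalar bound $\log(1+x) \leq x$ for $x > -1$, which lifts by functional calculus to $\log(\Id + \mtx{X}) \psdle \mtx{X}$ because the eigenvalues of $\mtx{X}$ lie in $(-1, \infty)$. Composing these inequalities gives the stated bound.

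The main obstacle is the reliance on operator monotonicity of $\log$, which is nontrivial and not one of the paper's standing tools. If a more self-contained argument is preferred, I would replace this step by the integral representation $\log \mtx{A} = \int_0^\infty \bigl[(1+t)^{-1} \Id - (t \Id + \mtx{A})^{-1}\bigr]\diff t$ and use that each $\mtx{A} \mapsto -(t\Id + \mtx{A})^{-1}$ is operator monotone, a fact that follows from the resolvent identity. The remaining points (checking positivity of the arguments to $\log$, and the scalar Taylor bounds) are routine once one keeps careful track of the direction of each inequality.
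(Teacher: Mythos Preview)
Your proof is correct and follows essentially the same route as the paper: lift the scalar bound $\econst^{-a} \leq 1 - a + \tfrac{1}{2}a^2$ to psd matrices via functional calculus, take expectations, then apply operator monotonicity of $\log$ together with $\log(1+x) \leq x$. The paper in fact cites exactly the operator-monotonicity fact you were worried about (it refers to \cite[Prop.~8.4.4]{Tro15:Introduction-Matrix}), so your main ``obstacle'' is already part of the toolkit; your extra care in checking that both sides are strictly positive and your alternative via the integral representation of $\log$ are nice but unnecessary here.
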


\begin{proof}
Recall the numerical inequality $\econst^{-a} \leq 1 - a + a^2 / 2$, valid for $a \geq 0$.
By the transfer rule~\cite[Prop.~2.1.4]{Tro15:Introduction-Matrix},
the inequality extends to matrices:
\[
\Expect \econst^{-\theta \mtx{W}}
	\psdle \Expect[ \Id - \theta \mtx{W} + \tfrac{1}{2} \theta^2 \mtx{W}^2 ]
	= \Id - \theta (\Expect \mtx{W}) + \tfrac{1}{2} \theta^2 (\Expect \mtx{W}^2)
	\quad\text{for $\theta \geq 0$.}
\]
Since the logarithm is matrix monotone~\cite[Prop.~8.4.4]{Tro15:Introduction-Matrix},
we can extract the logarithm.  Apply the numerical inequality $\log(1 + a) \leq a$
for $a > - 1$ using the transfer rule.
\end{proof}

\begin{proof}[Proof of \cref{fact:matrix-epz}]
The result follows quickly from standard matrix concentration arguments.
For example, to derive the probability inequality, we apply the matrix Laplace transform
method~\cite[Prop.~3.2.1]{Tro15:Introduction-Matrix}.  For each $\theta > 0$,
\begin{align*}
\Prob{ \lambda_{\min}(\mtx{Y} - \Expect \mtx{Y}) \leq -t }
	&\leq \econst^{-\theta t} \Expect \trace \exp( -\theta \mtx{Y} + \theta \Expect \mtx{Y} ) \\
	&\leq \econst^{-\theta t} \trace \exp\left( \sum_{i=1}^n \big( \log \Expect \econst^{-\theta \mtx{W}_i} + \theta  \Expect \mtx{W}_i \big) \right) \\
	&\leq \econst^{-\theta t} \trace \exp\left( \tfrac{1}{2} \theta^2 \sum_{i=1}^n \Expect \mtx{W}_i^2 \right)
	\leq d \cdot \econst^{-\theta t + \theta^2 L_2 / 2}.
\end{align*}
The second inequality requires the subadditivity of matrix log-mgfs~\cite[Lem.~3.1]{Tro15:Introduction-Matrix}
and the monotonicity of the trace exponential.
The third inequality is \cref{lem:epz-cgf}.  The last inequality depends on the
spectral mapping theorem and the definition of $L_2$.  Select $\theta = t/ (2L_2)$
to complete the bound.  The stated result follows from an application
of Weyl's inequality~\cite[Cor.~III.2.2]{Bha97:Matrix-Analysis}.
\end{proof}

\section*{Acknowledgements}

The author thanks Ethan Epperly, Jorge Garza-Vargas, Otte Hein{\"a}vaara, De Huang,
Raphael Meyer, Ramon van Handel, Roman Vershynin, and Rob Webber for valuable discussions and feedback.
This research was supported in part by NSF FRG Award 1952777, ONR Award N-00014-24-1-2223,
and the Caltech Carver Mead New Adventures Fund.

\printbibliography

\end{document}